\documentclass[10pt]{article}

\usepackage[T1]{fontenc}
\usepackage{lmodern}

\usepackage{amsmath, amssymb, amsthm}
\usepackage[margin=2cm]{geometry}
\usepackage{color, graphicx}
\usepackage[breakable]{tcolorbox}
\usepackage{multirow}

\usepackage{etoolbox}
\makeatletter
\patchcmd{\@maketitle}{\LARGE \@title}{\LARGE\bfseries\@title}{}{}

\renewcommand{\@seccntformat}[1]{\csname the#1\endcsname.\quad}
\makeatother

\definecolor{darkblue}{rgb}{0,0,.5}

\usepackage{hyperref}
\hypersetup{
	colorlinks=true,        
	linkcolor=darkblue,         
	citecolor=darkblue,         
	urlcolor=darkblue          
}

\makeatletter
\def\th@plain{%
	\thm@notefont{}
	\itshape 
}
\def\th@definition{%
	\thm@notefont{}
	\normalfont 
}

\renewenvironment{proof}[1][\proofname]{\par
	\normalfont
	\topsep0\p@\@plus3\p@ \trivlist
	\item[\hskip\labelsep\itshape
	#1\@addpunct{.}]\ignorespaces
}{%
	\qed\endtrivlist
}
\makeatother

\newtheorem{theorem}{Theorem}[section]
\newtheorem{lemma}[theorem]{Lemma}
\newtheorem{corollary}[theorem]{Corollary}
\newtheorem{proposition}[theorem]{Proposition}
\theoremstyle{definition}
\newtheorem{definition}[theorem]{Definition}
\theoremstyle{definition}
\newtheorem{example}[theorem]{Example}
\theoremstyle{definition}
\newtheorem{remark}[theorem]{Remark}
\theoremstyle{definition}
\newtheorem{assumption}{Assumption}
\theoremstyle{definition}
\newtheorem{algorithm}{Algorithm}

\renewcommand\theenumi{(\roman{enumi})}

\renewcommand{\labelenumi}{\rm (\roman{enumi})}

\usepackage[shortlabels]{enumitem}

\parskip    4pt
\tolerance  3000

\allowdisplaybreaks


\newcommand{\scal}[2]{\left\langle {#1},{#2} \right\rangle}

\newcommand{\RP}{\ensuremath{\mathbb{R}_+}}
\newcommand{\RPP}{\ensuremath{\mathbb{R}_{++}}}
\newcommand{\st}{\ensuremath{\stackrel}}

\newcommand{\argmin}{\ensuremath{\operatorname*{argmin}}}

\newcommand{\inte}{\ensuremath{\operatorname{int}}}

\newcommand{\dom}{\ensuremath{\operatorname{dom}}}

\newcommand{\epi}{\ensuremath{\operatorname{epi}}}

\newcommand{\dist}{\ensuremath{\operatorname{dist}}}

\newcounter{step}[algorithm]
\setcounter{step}{0}
\newcommand\step[1]{%
	\refstepcounter{step}	
	\vskip 0.25\baselineskip
	\ifx\hfuzz#1\hfuzz
		\item[~\(\triangleright\)~\textbf{Step~\arabic{step}.}]
	\else
		\item[~\(\triangleright\)~\textbf{Step~\arabic{step}}] (\texttt{#1})\textbf{.}%
	\fi
}

\begin{document}

\title{Extrapolated Proximal Subgradient Algorithms for Nonconvex and Nonsmooth Fractional Programs\footnote{Research of the first author is partially supported by the Austrian Science Fund (FWF), project number I 2419-N32, and
the research of the second and the third authors is partially supported by the Australian Research Council (ARC), project number DP190100555.}}

\author{
Radu Ioan Bo\c{t}\thanks{Faculty of Mathematics, University of Vienna, A-1090 Vienna, Austria.
E-mail: \texttt{radu.bot@univie.ac.at}.},
~
Minh N. Dao\thanks{School of Engineering, Information Technology and Physical Sciences, Federation University Australia, Ballarat 3353, Australia.
E-mail: \texttt{m.dao@federation.edu.au}.},
~and~
Guoyin Li\thanks{Department of Applied Mathematics, University of New South Wales, Sydney 2052, Australia.
E-mail: \texttt{g.li@unsw.edu.au}.}
}

\date{Revised Version: Oct. 16, 2020}
\maketitle

\vspace{-0.2cm}
\begin{abstract}
In this paper, we consider a broad class of nonsmooth and nonconvex fractional programs, where the numerator  can be written as the sum of a continuously differentiable convex function whose gradient is Lipschitz continuous and  a proper lower semicontinuous (possibly nonconvex) function, and the denominator is weakly convex over the constraint set. This model problem includes the composite optimization problems studied extensively lately, and encompasses many important modern fractional optimization problems arising from diverse areas such as the recently proposed scale invariant sparse signal reconstruction problem in signal processing. We propose a proximal subgradient algorithm with extrapolations for solving this optimization model and show that the iterated sequence generated by the algorithm is bounded and any of its limit points is a stationary point of the model problem. The choice of our extrapolation parameter is flexible and includes the popular extrapolation parameter adopted in the restarted Fast Iterative Shrinking-Threshold Algorithm (FISTA). By providing a unified analysis framework of descent methods, we establish the convergence of the full sequence under the assumption that a suitable merit function satisfies the Kurdyka--{\L}ojasiewicz (KL) property. In particular, our algorithm exhibits \emph{linear convergence} for the scale invariant sparse signal reconstruction problem and the Rayleigh quotient problem over spherical constraint. In the case where the denominator is the maximum of finitely many continuously differentiable weakly convex functions, we also propose an enhanced extrapolated proximal subgradient algorithm with guaranteed convergence to a stronger notion of stationary points of the model problem. Finally, we illustrate the proposed methods by both analytical and simulated numerical examples.
\end{abstract}

\section{Introduction}

In this paper, we consider the following class of nonsmooth and nonconvex fractional program which takes the form
\begin{equation}\label{e:prob}
\min_{x\in S} \frac{f(x)}{g(x)}, \tag{P}
\end{equation}
where $\mathcal{H}$ is a finite-dimensional real Hilbert space, $S$ is a nonempty closed convex subset of $\mathcal{H}$, and $f, g\colon \mathcal{H}\to \left(-\infty,+\infty\right]$ are proper lower semicontinuous functions which are not necessarily convex.
Throughout this paper, we assume that the numerator $f$ can be written as the sum of $f^\mathfrak{s}$ and $f^\mathfrak{n}$, where $f^\mathfrak{s}$ is a continuously differentiable convex function  whose gradient is Lipschitz continuous and $f^\mathfrak{n}$ is a nonconvex function, and the denominator $g$ is finite, positive, and weakly convex over the constraint set $S$. We note that weakly convex functions form a broad class of functions which covers convex functions, nonconvex quadratic functions and continuously differentiable functions whose gradient are Lipschitz continuous.

This class of nonsmooth and nonconvex fractional program is a broad optimization model which encompasses many important modern optimization problems arising from diverse areas. This includes, for example, the recently proposed scale invariant sparse signal reconstruction problem in signal processing \cite{RWDL19} and the robust Sharpe ratio optimization problems in finance \cite{CHZ11}. Moreover, in the special case where the denominator $g(x) \equiv 1$ and $S=\mathcal{H}$, problem~\eqref{e:prob} reduces to the well-studied nonsmooth composite optimization with the form
\begin{equation*}
 \min_{x\in \mathcal{H}} f(x)=f^\mathfrak{s}(x)+f^\mathfrak{n}(x),
\end{equation*}
which covers a lot of modern optimization problems in machine learning (for example, the Lasso problem in computer science). Below we provide a few motivating examples illustrating the model problem~\eqref{e:prob}.

\begin{enumerate}
\item\label{r:ex_ssr}
{\bf Scale invariant sparse signal recovery problem:} In signal processing, to reconstruct a sparse signal from its observation, one considers the following scale invariant minimization problem \cite{RWDL19}
\begin{equation*}
\min_{x\in \mathbb{R}^N} \frac{\|x\|_1}{\|x\|_2} \quad\text{s.t.}\quad Ax \leq b,\ Cx =d,
\end{equation*}
where $\|\cdot\|_1$ and $\|\cdot\|_2$ are the $\ell_1$-norm and Euclidean norm respectively, $A \in \mathbb{R}^{M\times N}$, $b \in \mathbb{R}^M$, $C \in \mathbb{R}^{P\times N}$, $d \in \mathbb{R}^P$, and $S =\{x\in \mathbb{R}^N: Ax \leq b, Cx =d\}$ is bounded and does not contain the origin. Here, the objective function relates to the restricted isometry constant and serves as a surrogate of the cardinality of $x$. It was shown in \cite{RWDL19} that this model can outperform the celebrated Lasso model in recovering a sparse solution. This model problem is indeed a special case of problem~\eqref{e:prob} with $f(x) =\|x\|_1$, $g(x) =\|x\|_2$ and $S$ being a polytope with the form that $S =\{x\in \mathbb{R}^N: Ax \leq b, Cx =d\}$.

\item\label{r:rayleigh}
{\bf Rayleigh quotient optimization with spherical constraint:}
The Rayleigh quotient optimization problem with spherical constraint can be formulated as
\begin{equation*}
\min_{x\in \mathbb{R}^N} \frac{x^\top Ax}{x^\top Bx} \quad\text{s.t.}\quad \|x\|_2=1,
\end{equation*}
where $A$ and $B$ are symmetric positive definite matrices. This is a special
case of problem~\eqref{e:prob} with $S=\mathbb{R}^N$, $f(x)=x^\top Ax+ \iota_C(x)$ where $C$ is  the unit sphere $\{x\in \mathbb{R}^N: \|x\|_2=1\}$ and $\iota_C$ is the indicator function of the set $C$ (see \eqref{eq:indicator} later for the definition of indicator function), and $g(x)=x^\top Bx$.

\item\label{r:ex_Sharpe}
{\bf Robust Sharpe ratio minimization problem:} The standard Sharpe ratio optimization problem (see, e.g., \cite{CHZ11}) can be formulated as
\begin{equation*}
\max_{x\in \mathbb{R}^N} \frac{a^\top x-r}{\sqrt{x^\top Ax}}
\quad\text{s.t.}\quad e^\top x=1,\ x\geq 0,
\end{equation*}
where the numerator is the expected return and the denominator measures the risk. In practice, the data associated with the model is often uncertain due to prediction or estimation errors. Following
robust optimization approach, we assume that the data $(A,a,r)$ are uncertain and belong to the polyhedral uncertainty set
$\mathcal{U}= \mathcal{U}_1 \times \mathcal{U}_2$,
where
$\mathcal{U}_1={\rm conv}\{(a_1,r_1),\dots,(a_{m_1},r_{m_1})\}$  and  $\mathcal{U}_2={\rm conv}\{A_1,\dots, A_{m_2}\}$.
Here, $(a_i,r_i) \in \mathbb{R}^N \times \mathbb{R}$, $i=1,\dots,m_1$, are such that $a_i^\top x-r_i \leq 0$ for all $x \in S$, and $A_j$ are symmetric positive definite matrix, $j=1,\dots,m_2$.
The robust Sharpe ratio optimization problem can be  written as
\begin{equation*}
\max_{x\in \mathbb{R}^N} \frac{\min_{(a,r) \in \mathcal{U}_2} \{a^\top x-r\}}{\max_{A \in \mathcal{U}_2}\sqrt{x^\top Ax}}
\quad\text{s.t.}\quad e^\top x=1,\ x\geq 0,
\end{equation*}
which can be further simplified as
\begin{equation*}
\min_{x\in \mathbb{R}^N} - \frac{\min_{1 \leq i\leq m_1} \{a_i^\top x-r_i\}}{\max_{1 \leq i \leq m_2}{\sqrt{x^\top A_ix}}}
\quad\text{s.t.}\quad e^\top x=1,\ x\geq 0.
\end{equation*}
This is a special case of problem~\eqref{e:prob} with $f(x)=-\min_{1 \leq i\leq m_1} \{a_i^\top x-r_i\}=\max_{1 \leq i\leq m_1} \{r_i-a_i^\top x\}$, $g(x)=\max_{1 \leq i \leq m_2}\sqrt{x^\top A_ix}$ and $S=\{x\in \mathbb{R}^N: e^\top x=1, x\geq 0\}$.
\end{enumerate}

The fractional programming problem has a long history, and a classical and popular approach for solving the fractional programming problem is the Dinkelbach's method (see, for example, \cite{CFS85,Din67}) which relates it to the following optimization problem
\begin{equation}\label{eq:P2}
\min_{x \in S} f(x) -\overline{\theta} g(x).
\end{equation}
In particular, \eqref{e:prob} has an optimal solution $\overline{x} \in S$ if and only if $\overline{x}$  is  an optimal solution to \eqref{eq:P2} and the
optimal objective value of \eqref{eq:P2} is equal to zero with $\overline{\theta}=\frac{f(\overline{x})}{g(\overline{x})}$. However, one drawback of this procedure is that this can only be done in the very restrictive case when the optimal objective value of \eqref{e:prob} is known. To overcome this drawback, in the literature (see \cite{CFS85,Din67,Iba81,Iba83,Sch75}) an iterative scheme was proposed which  requires solving in
each iteration $n$ of the optimization problem
\begin{equation}\label{eq:P3}
\min_{x \in S} \{f(x) -{\theta}_n g(x)\}
\end{equation}
while $\theta_n$ is updated by $\theta_{n+1} :=
\frac{f(x_{n+1})}{g(x_{n+1})}$, where $x_{n+1}$ is an optimal solution of \eqref{eq:P3}. However,
solving in each iteration an optimization problem of type \eqref{eq:P3} can be as expensive and
difficult as solving the fractional programming problem~\eqref{e:prob} in general.

Recently, in view of the success of the proximal algorithms in solving composite optimization problems (that is, when the denominator $g(x) \equiv 1$), \cite{BotCse17} proposed proximal gradient  type algorithms  for fractional
programming problems, where the numerator $f$ is a proper, convex and lower semicontinuous function and the denominator $g$ is a smooth function, either concave
or convex. The approach of \cite{BotCse17} is appealing because the proposed iterative methods there perform a gradient step with respect to $g$ and a proximal step with respect to $f$. In this way, the functions $f$ and $g$ are processed separately in each iteration.

Although the approach in \cite{BotCse17} is very inspiring, still many research questions need to be answered. For example,
\begin{itemize}
\item
firstly, how to extend the approach in \cite{BotCse17} to the case where the numerator and denominator are both nonconvex and nonsmooth? Such an extension would allow us to cover, for example, robust Sharpe ratio optimization problems where both the numerator $f$ and the denominator $g$ are nonsmooth, and the Rayleigh quotient optimization problem with spherical constraints where the numerator $f$ is a nonconvex function.
\item
secondly, it is known that the performance of the proximal gradient method can be largely improved (see \cite{Nes04}) if one can incorporate extrapolation steps  in solving composite optimization problems (that is, when the denominator $g(x) \equiv 1$ in \eqref{e:prob}), as for example for the restarted Fast Iterative Shrinking-Threshold Algorithm (FISTA) \cite[Chapter~10]{Beck17}. Therefore, it is of great interest to develop proximal algorithms with extrapolations for solving fractional programs.
\item
thirdly, in the case where $f$ and $g$ are convex, and $g$ is continuously differentiable, it was shown in \cite{BotCse17} that the proximal gradient method generates a sequence of iterates which converges to a stationary point of problem~\eqref{e:prob}. Recently, algorithms were proposed for computing a stronger version of stationary points called d(irectional)-stationary points for a class of difference-of-convex optimization problems (for example see \cite{AV20,PRA16}). Taking this into account, developing  algorithms which converge to sharper notions of stationary points of problem~\eqref{e:prob} is also highly desirable.
\end{itemize}

The purpose of this paper is to provide answers to the above questions. Specifically, the contributions of this paper are as follows.
\begin{enumerate}[label =(\arabic*)]
\item
In Section~\ref{s:ePSG}, we propose a proximal subgradient algorithm with extrapolations for solving the model problem~\eqref{e:prob}. We then establish that the sequence of iterates generated by the algorithm is bounded and any of its limit points is a stationary point of the model problem~\eqref{e:prob}. Interestingly, the  convergence of our algorithm does not require the numerator and denominator to be convex or smooth. Moreover, our extrapolation parameter is broad enough to accommodate the popular extrapolation parameter used for restarted FISTA.
\item
In Section~\ref{s:KL}, we establish a general framework for analyzing descent methods which is amenable for optimization methods with multi-steps and inexact subproblems. Our conditions are weaker than those in the literature  and complement the existing results. With the help of this framework, we establish the convergence of the full sequence under the assumption that a suitable merit function satisfies the KL property. In particular, by identifying the explicit KL exponent, we establish linear convergence of the proposed algorithm for scale invariant sparse signal recovery problem and Rayleigh quotient optimization with spherical constraint.
\item
In the case where the denominator is the maximum of finitely many continuously differentiable weakly convex functions, in Section~\ref{s:strong}, we also propose an enhanced proximal subgradient algorithm with extrapolations, and show that this enhanced algorithm converges to a stronger notion of stationary points of the model problem.

\item
Finally, we illustrate the proposed methods via analytical and simulated numerical examples in Section~\ref{s:numerical}.
\end{enumerate}

\section{Preliminaries}
Throughout this work, we assume that $\mathcal{H}$ is a finite-dimensional real Hilbert space
with inner product $\scal{\cdot}{\cdot}$ and the induced norm $\|\cdot\|$. The set of nonnegative integers is denoted by $\mathbb{N}$, the set of real numbers by $\mathbb{R}$, the set of nonnegative real numbers by $\RP := \{x \in \mathbb{R}:x \geq 0\}$, and the set of the positive real numbers by $\RPP := \{x \in \mathbb{R}:x >0\}$.

Let $h\colon \mathcal{H}\to \left[-\infty,+\infty\right]$ be an extended real-valued function. The \emph{domain} of $h$ is $\dom h :=\{x \in \mathcal{H}: h(x) <+\infty\}$. We say that $h$ is \emph{proper} if $\dom h\neq \varnothing$ and it never take the value $-\infty$. The function $h$ is \emph{lower semicontinuous} if, for all $x\in \dom h$, $h(x)\leq \liminf_{z\to x} h(z)$. We use the symbol $z\st{h}{\to}x$ to indicate $z\to x$ and $h(z)\to h(x)$. Given $x\in \mathcal{H}$ with $|h(x)| <+\infty$, the \emph{Fr\'echet subdifferential} of $h$ at $x$ is defined by
\begin{equation*}
\widehat{\partial} h(x) :=\left\{u\in \mathcal{H}:\; \liminf_{z\to x}\frac{h(z)-h(x)-\langle u,z-x\rangle}{\|z-x\|}\geq 0\right\}
\end{equation*}
and the \emph{limiting subdifferential} of $h$ at $x$ is defined by 
\begin{equation*}
\partial_L h(x) :=\left\{u\in \mathcal{H}:\; \exists x_n\st{h}{\to}x,\; u_n\to u \text{~~with~~} u_n\in \widehat{\partial} h(x_n)\right\}.
\end{equation*}
We set $\widehat{\partial} h(x) =\partial_L h(x) :=\varnothing$ when $|h(x)| =+\infty$ and define $\dom\partial_L h :=\{x\in \mathcal{H}: \partial_L h(x)\neq \varnothing\}$. It follows from the definition that the limiting subdifferential has the \emph{robustness property}
\begin{equation}\label{e:robustness}
\partial_L h(x) =\left\{u\in \mathcal{H}:\; \exists x_n \st{h}{\to}x,\; u_n\to u \text{~~with~~} u_n\in \partial_L h(x_n)\right\}.
\end{equation}
For a convex function $h$, both Fr\'echet and limiting subdifferentials reduce to the classical subdifferential in convex analysis (see, for example, \cite[Theorem 1.93]{Mor06})
\begin{equation*}
\partial h(x): =\left\{u\in \mathcal{H}:\; \forall z\in \mathcal{H},\; \langle u,z-x\rangle\leq h(z)-h(x)\right\}.
\end{equation*}
Moreover, for a strictly differentiable\footnote{A function $h$ is strictly differentiable at $x$ if there exists $u \in \mathcal{H}$ such that $\lim\limits_{y,z \to x}\frac{h(y)-h(z)-\langle u,y-z\rangle}{\|y-z\|} =0$. Clearly, if $h$ is continuously differentiable at $x$, then it is strictly differentiable at $x$.} function $h$, both Fr\'echet and limiting subdifferentials reduce to the derivative of $h$ denoted by $\nabla h$.

Let $S$ be a nonempty subset of $\mathcal{H}$. Its convex hull is denoted by ${\rm conv} \, S$. The indicator function of $S$ is given by 
\begin{equation}\label{eq:indicator}
\iota_S(x) :=\begin{cases}
0 & \text{if~} x \in S, \\
+\infty & \text{if~} x \notin S.
\end{cases}
\end{equation}
Given $x\in \mathcal{H}$, the \emph{Fr\'echet normal cone} of $S$ at $x$ is given by $\widehat{N}_S(x) :=\widehat{\partial} \iota_S(x)$ and the \emph{limiting normal cone} of $S$ at $x$ is $N_S(x) :=\partial_L \iota_S(x)$. The set $S$ is \emph{regular} at $x\in S$ if $N_S(x) =\widehat{N}_S(x)$. We say that $S$ is regular if it is regular at all of its points. It is known, e.g., from \cite[Proposition~1.5]{Mor06} that $S$ is regular at $x\in S$ if it is locally convex around $x$, i.e., if there exists a neighborhood $U$ of $x$ such that $S\cap U$ is convex.

For a function $h\colon \mathcal{H}\to \left[-\infty,+\infty\right]$ finite at $x$, we say that $h$ is \emph{regular}\footnote{In some literature, this is also referred as \emph{lower regular} in \cite{Mor06,MNY06}.} at $x$ if $\widehat{\partial} h(x) =\partial_L h(x)$. For a  proper lower semicontinuous function $h$, it is clear that if $h$ is convex around $x$ or strictly differentiable at $x$, then it is regular at $x$. In the case where $h$ is an indicator function of a closed set or is a Lipschitz continuous function around $x$, according to \cite[Proposition~1.92]{Mor06}, $h$ is regular at $x$ if and only if $\epi h :=\{(x,r)\in \mathcal{H}\times \mathbb{R}: r\geq h(x)\}$ is regular at $(x,h(x))$.

In general, the limiting subdifferential set can be nonconvex (e.g., for $h(x)=-|x|$ at $0\in\mathbb{R}$) while $\partial_L h$ enjoys comprehensive calculus rules based on \emph{variational/extremal principles} of variational analysis \cite{Mor06,RW98}. In particular, the following sum rule and quotient rule and for limiting subdifferential will be useful for us later.

\begin{lemma}[Sum and quotient rules]
\label{l:calculus}
Let $f,g\colon \mathcal{H}\to \left(-\infty,+\infty\right]$ be proper lower semicontinuous functions, and let $x \in \mathcal{H}$.
Then the following hold:
\begin{enumerate}
\item
\label{l:calculus_sum}
Suppose that $f$ is finite at $\overline{x}$ and $g$ is locally Lipschitz around $\overline{x}$. Then $\partial_L(f+g)(\overline{x}) \subseteq \partial_L f(\overline{x}) + \partial_L g(\overline{x})$, where the equality holds if both $f$ and $g$ are regular at $\overline{x}$, in which case $f+g$ is also regular at $\overline{x}$. Moreover, if $g$ is strictly differentiable at $\overline{x}$, then $\partial_L(f+g)(\overline{x}) =\partial_L f(\overline{x}) +\nabla g(\overline{x})$.
\item
\label{l:calculus_quotient}
Suppose that $f$ and $g$ are Lipschitz continuous around $\overline{x}$, and $g(\overline{x})\neq 0$. Then, if $\widehat{\partial} g$ is nonempty-valued around $\overline{x}$, one has
\begin{equation}\label{e:quotient}
\partial_L\left(\frac{f}{g}\right)(\overline{x}) \subseteq \frac{\partial_L(g(\overline{x})f)(\overline{x}) -f(\overline{x})\partial_L g(\overline{x})}{g(\overline{x})^2}.
\end{equation}
If $g$ is strictly differentiable at $\overline{x}$, one has
\begin{equation}\label{e:quotient'}
\partial_L\left(\frac{f}{g}\right)(\overline{x}) =\frac{\partial_L(g(\overline{x})f)(\overline{x}) -f(\overline{x})\nabla g(\overline{x})}{g(\overline{x})^2},
\end{equation}
and consequently $f/g$ is regular at $\overline{x}$ if and only if the function $x\mapsto g(\overline{x})f(x)$ is regular at $\overline{x}$.
\end{enumerate}
\end{lemma}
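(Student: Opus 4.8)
The plan is to handle the two parts separately, using the limiting sum rule of variational analysis as the workhorse and then reducing the quotient rule to it.

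For part~\ref{l:calculus_sum}, the forward inclusion $\partial_L(f+g)(\overline{x})\subseteq\partial_L f(\overline{x})+\partial_L g(\overline{x})$ is exactly the limiting subdifferential sum rule under the hypothesis that $g$ is locally Lipschitz around $\overline{x}$, which I would invoke from \cite{Mor06}. For the equality under regularity, I would combine it with the elementary Fr\'echet sum rule $\widehat{\partial}f(\overline{x})+\widehat{\partial}g(\overline{x})\subseteq\widehat{\partial}(f+g)(\overline{x})$, which always holds because the sum of two first-order minorants is again a first-order minorant. When $f$ and $g$ are regular the left-hand side equals $\partial_L f(\overline{x})+\partial_L g(\overline{x})$, while $\widehat{\partial}(f+g)(\overline{x})\subseteq\partial_L(f+g)(\overline{x})$ is contained in that same set by the forward inclusion; the resulting chain of inclusions collapses to equalities, forcing both $\partial_L(f+g)(\overline{x})=\partial_L f(\overline{x})+\partial_L g(\overline{x})$ and $\widehat{\partial}(f+g)(\overline{x})=\partial_L(f+g)(\overline{x})$, i.e.\ regularity of $f+g$. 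The strictly differentiable case is the standard \emph{exact} sum rule: since $\partial_L g(\overline{x})=\{\nabla g(\overline{x})\}$, the forward inclusion gives $\subseteq$, and applying it to the decomposition $f=(f+g)+(-g)$ with $-g$ again strictly differentiable yields the reverse inclusion.

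For part~\ref{l:calculus_quotient}, I would write $f/g=\varphi\circ(f,g)$ with $\varphi(u,v)=u/v$, which is strictly differentiable in a neighbourhood of $(f(\overline{x}),g(\overline{x}))$ because $g(\overline{x})\neq0$, while $(f,g)$ is locally Lipschitz around $\overline{x}$. The chain rule (scalarization) for the limiting subdifferential of a smooth outer function composed with a locally Lipschitz mapping then gives
\[
\partial_L\Big(\frac{f}{g}\Big)(\overline{x})\subseteq\partial_L\big(\alpha f+\beta g\big)(\overline{x}),
\qquad
(\alpha,\beta):=\nabla\varphi\big(f(\overline{x}),g(\overline{x})\big)=\Big(\frac{1}{g(\overline{x})},-\frac{f(\overline{x})}{g(\overline{x})^2}\Big).
\]
Since $\alpha f+\beta g=\frac{1}{g(\overline{x})^2}\big(g(\overline{x})f-f(\overline{x})g\big)$ and $g(\overline{x})^2>0$, I would pull out the positive constant and apply the sum rule from part~\ref{l:calculus_sum} to split $\partial_L(g(\overline{x})f-f(\overline{x})g)(\overline{x})$ into $\partial_L(g(\overline{x})f)(\overline{x})+\partial_L(-f(\overline{x})g)(\overline{x})$, which matches the claimed right-hand side of \eqref{e:quotient} once the second term is identified with $-f(\overline{x})\partial_L g(\overline{x})$.

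The hard part is exactly this last identification, because the limiting subdifferential does not commute with multiplication by a negative scalar: when $f(\overline{x})>0$ one only has $\partial_L(-f(\overline{x})g)(\overline{x})=-f(\overline{x})\partial_L^+g(\overline{x})$, where $\partial_L^+g(\overline{x}):=-\partial_L(-g)(\overline{x})$ is the upper limiting subdifferential, and in general $\partial_L^+g(\overline{x})\neq\partial_L g(\overline{x})$. This is precisely where the standing hypothesis that $\widehat{\partial}g$ is nonempty-valued around $\overline{x}$ enters, and I would use it to show $\partial_L^+g(\overline{x})\subseteq\partial_L g(\overline{x})$: any element of $\partial_L^+g(\overline{x})$ is a limit of Fr\'echet upper subgradients of $g$ at points $x_n\st{g}{\to}\overline{x}$, at each such $x_n$ the function $g$ also has a nonempty Fr\'echet subdifferential by hypothesis, and a function admitting both a lower and an upper Fr\'echet subgradient at a point is Fr\'echet differentiable there, so the upper subgradient coincides with $\nabla g(x_n)\in\widehat{\partial}g(x_n)\subseteq\partial_L g(x_n)$; passing to the limit via the robustness property~\eqref{e:robustness} lands the limit in $\partial_L g(\overline{x})$, and scaling the inclusion by the (possibly negative) factor preserves it, completing \eqref{e:quotient}. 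Finally, when $g$ is strictly differentiable at $\overline{x}$ the reciprocal $1/g$ is strictly differentiable at $\overline{x}$ with $\nabla(1/g)(\overline{x})=-\nabla g(\overline{x})/g(\overline{x})^2$, and writing $f/g=f\cdot(1/g)$ I would apply the exact product rule for a locally Lipschitz function times a strictly differentiable one, together with $\partial_L g(\overline{x})=\{\nabla g(\overline{x})\}$, to obtain \eqref{e:quotient'}. Since this exact product rule holds verbatim at the level of both the Fr\'echet and the limiting subdifferentials and differs only by the common strictly differentiable term $-f(\overline{x})\nabla g(\overline{x})$ and the common positive factor $1/g(\overline{x})^2$, comparing the two formulas shows that $f/g$ is regular at $\overline{x}$ if and only if $x\mapsto g(\overline{x})f(x)$ is regular at $\overline{x}$.
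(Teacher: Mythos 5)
Your proposal is correct and reaches every conclusion of the lemma, but it replaces several of the paper's citations with direct arguments and detours differently at two points. In part~\ref{l:calculus_sum} the paper simply quotes \cite[Theorem~3.36]{Mor06} for the inclusion-plus-regularity statement and \cite[Proposition~1.107(ii)]{Mor06} for the strictly differentiable case; your sandwiching of $\widehat{\partial}(f+g)(\overline{x})$ between $\widehat{\partial}f(\overline{x})+\widehat{\partial}g(\overline{x})$ and $\partial_L f(\overline{x})+\partial_L g(\overline{x})$, and your decomposition $f=(f+g)+(-g)$, are elementary and sound substitutes. In part~\ref{l:calculus_quotient} the paper invokes the quotient rule \cite[Proposition~1.111(ii)]{Mor06} to get $\partial_L(f/g)(\overline{x})=\partial_L(g(\overline{x})f-f(\overline{x})g)(\overline{x})/g(\overline{x})^2$ as an \emph{equality} and then splits the numerator by cases on the sign of $f(\overline{x})$, handling the delicate case $f(\overline{x})>0$ by citing the difference rule \cite[Corollary~3.4]{MNY06} --- exactly where the hypothesis that $\widehat{\partial}g$ is nonempty-valued enters. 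You reach the same intermediate set via scalarization of $\varphi\circ(f,g)$ with $\varphi(u,v)=u/v$ (only as an inclusion, which suffices for \eqref{e:quotient}), and you prove the needed difference-rule instance from scratch: $\partial_L(-f(\overline{x})g)(\overline{x})=-f(\overline{x})\partial_L^+g(\overline{x})$, and nonemptiness of $\widehat{\partial}g$ near $\overline{x}$ forces every Fr\'echet upper subgradient at nearby points to coincide with an actual gradient lying in $\widehat{\partial}g$, so that $\partial_L^+g(\overline{x})\subseteq\partial_L g(\overline{x})$ by robustness. That is essentially a proof of the special case of \cite{MNY06} the paper uses, and it makes transparent why the hypothesis is there. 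For \eqref{e:quotient'} and the regularity equivalence the paper reuses its quotient identity together with part~\ref{l:calculus_sum} and cites \cite[Corollaries~1.12.2 and 1.14.2]{Kru03}; your product-rule route $f\cdot(1/g)$ and the comparison of the Fr\'echet and limiting versions of the resulting formula are equally valid, at the cost of the exact product rule for a Lipschitz function times a strictly differentiable factor at both subdifferential levels, which holds via the expansion $fh=h(\overline{x})f+f\,(h-h(\overline{x}))$ with the second summand strictly differentiable at $\overline{x}$ with derivative $f(\overline{x})\nabla h(\overline{x})$. Net effect: same skeleton, more self-contained filling, no gaps.
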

\begin{proof}
\ref{l:calculus_sum}: We first derive from \cite[Theorem~3.36]{Mor06} and its following remark that $\partial_L(f+g)(\overline{x}) \subseteq \partial_L f(\overline{x}) + \partial_L g(\overline{x})$ and that if both $f$ and $g$ are regular at $\overline{x}$, then so is $f+g$ and $\partial_L(f+g)(\overline{x}) =\partial_L f(\overline{x}) + \partial_L g(\overline{x})$. By \cite[Proposition~1.107(ii)]{Mor06}, this equality also holds if $g$ is strictly differentiable at $\overline{x}$.  

\ref{l:calculus_quotient}: As $f$ and $g$ are Lipschitz continuous around $\overline{x}$ and $g(\overline{x})\neq 0$,  \cite[Proposition~1.111(ii)]{Mor06} implies that
\begin{equation}\label{e:quotient0}
\partial_L\left(\frac{f}{g}\right)(\overline{x}) =\frac{\partial_L(g(\overline{x})f -f(\overline{x})g)(\overline{x})}{g(\overline{x})^2}.
\end{equation}
Thus, to see \eqref{e:quotient}, it suffices to show that $\partial_L(g(\overline{x})f -f(\overline{x})g)(\overline{x})\subseteq \partial_L(g(\overline{x})f)(\overline{x}) -f(\overline{x})\partial_L g(\overline{x})$. This is obvious if $f(\overline{x}) =0$. If $f(\overline{x}) <0$, then $-f(\overline{x}) >0$ and, by \ref{l:calculus_sum},
\begin{equation*}
\partial_L(g(\overline{x})f -f(\overline{x})g)(\overline{x})\subseteq \partial_L(g(\overline{x})f)(\overline{x}) +\partial_L(-f(\overline{x})g)(\overline{x}) =\partial_L(g(\overline{x})f)(\overline{x}) -f(\overline{x})\partial_L g(\overline{x}).
\end{equation*}
If $f(\overline{x}) >0$, then $\widehat{\partial}(f(\overline{x})g) =f(\overline{x})\widehat{\partial} g$ is nonempty-valued around $\overline{x}$ and, by \cite[Corollary~3.4]{MNY06},
\begin{equation*}
\partial_L(g(\overline{x})f -f(\overline{x})g)(\overline{x})\subseteq \partial_L(g(\overline{x})f)(\overline{x}) -\partial_L(f(\overline{x})g)(\overline{x}) =\partial_L(g(\overline{x})f)(\overline{x}) -f(\overline{x})\partial_L g(\overline{x}),
\end{equation*}
from which we get the claimed inclusion. The equality \eqref{e:quotient'} then follows from \eqref{e:quotient0} and \ref{l:calculus_sum}. Finally, the conclusion for the regularity of $f/g$ follows from \eqref{e:quotient'} and \cite[Corollaries~1.12.2 and 1.14.2]{Kru03}.
\end{proof}

We say that a function $h$ is \emph{weakly convex (on $\mathcal{H}$)} if there exists $\rho \geq 0$ such that $h+\frac{\rho}{2}\|\cdot\|^2$ is a convex function. Moreover, the smallest constant $\rho$ such that $h+\frac{\rho}{2}\|\cdot\|^2$ is convex is called the \emph{modulus} for a weakly convex function $h$. More generally, a function $h$ is said to be \emph{weakly convex on $S \subseteq \mathcal{H}$ with modulus $\rho$} if $h+\iota_S$ is weakly convex with modulus $\rho$. Weakly convex functions form a broad class of functions which covers quadratic functions, convex functions and continuously differentiable functions whose gradient is Lipschitz continuous. Recall that the (one-sided) \emph{directional derivative} of $h$ in the direction $d$ is defined by
\begin{equation*}
h'(x;d) =\lim_{t \to 0^+} \frac{h(x+td)-h(x)}{t},
\end{equation*}
provided the limit exists. We end this section with the following lemma.

\begin{lemma}
\label{l:weaklycvx}
Let $S$ be a nonempty closed convex subset of $\mathcal{H}$, let $\overline{x}\in S$, and let $h\colon \mathcal{H}\to \left(-\infty,+\infty\right]$ be a proper lower semicontinuous function which is weakly convex on $S$. Then the following hold:
\begin{enumerate}
\item\label{l:weaklycvx_cvx}
For all $x\in \mathcal{H}$, $\partial_L(h+\iota_S)(x)$ is a (possibly empty) closed convex set.
\item\label{l:weaklycvx_dderivative}
If $\overline{x}\in \inte S$ and $h$ is continuous at $\overline{x}$, then $\partial_L h(\overline{x}) =\partial_L(h+\iota_S)(\overline{x})\neq \varnothing$ and, for all $x\in S$, $h'(\overline{x};x-\overline{x}) =\max\{\scal{v}{x-\overline{x}}: v\in \partial_L(h+\iota_S)(\overline{x})\}$. In particular, if $h$ is a weakly convex function on $\mathcal{H}$ which is continuous at $\overline{x}$, then, for all $d\in \mathcal{H}$, $h'(\overline{x};d) =\max\{\scal{v}{d}: v\in \partial_L h(\overline{x})\}$. 
\item\label{l:weaklycv_equi}
$0\in \partial_L(h+\iota_S)(\overline{x})$ if and only if, for all $x\in S$, $h'(\overline{x};x-\overline{x})\geq 0$.
\end{enumerate}
\end{lemma}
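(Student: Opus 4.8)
The plan is to reduce everything to convex analysis through the smooth perturbation built into the definition of weak convexity. Fix a modulus $\rho\geq 0$ for which $\psi := h+\iota_S+\frac{\rho}{2}\|\cdot\|^2$ is a proper lower semicontinuous convex function, and write $\phi := h+\iota_S$, so that $\phi = \psi - \frac{\rho}{2}\|\cdot\|^2$. Since $\psi$ is convex, its limiting and classical subdifferentials coincide, and the map $x\mapsto -\frac{\rho}{2}\|x\|^2$ is strictly differentiable with gradient $-\rho x$. Applying the strict-differentiability case of the sum rule in Lemma~\ref{l:calculus}\ref{l:calculus_sum} (with the convention that both sides are empty off the common domain of $\phi$ and $\psi$), I obtain the identity
\begin{equation*}
\partial_L\phi(x) = \partial\psi(x) - \rho x \qquad\text{for all } x\in\mathcal{H},
\end{equation*}
which is the workhorse for all three parts. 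Part~\ref{l:weaklycvx_cvx} then follows immediately: $\partial\psi(x)$ is closed and convex as a classical convex subdifferential, and translating by the fixed vector $-\rho x$ preserves both properties.

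For Part~\ref{l:weaklycvx_dderivative}, I would first use that the limiting subdifferential is a purely local object: since $\overline{x}\in\inte S$, there is a neighborhood of $\overline{x}$ on which $\iota_S\equiv 0$, hence $\phi\equiv h$ there and $\partial_L\phi(\overline{x}) = \partial_L h(\overline{x})$. Because $\overline{x}\in\inte S$ and $h$ is continuous at $\overline{x}$, the convex function $\psi$ is finite and continuous near $\overline{x}$, so $\partial\psi(\overline{x})\neq\varnothing$; by the workhorse identity this gives $\partial_L h(\overline{x}) = \partial_L\phi(\overline{x})\neq\varnothing$. I would then invoke the classical fact that a convex function continuous at $\overline{x}$ satisfies $\psi'(\overline{x};d) = \max\{\scal{v}{d}: v\in\partial\psi(\overline{x})\}$ for every direction $d$. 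Since $h = \psi - \frac{\rho}{2}\|\cdot\|^2$ near $\overline{x}$, subtracting the smooth term yields $h'(\overline{x};d) = \psi'(\overline{x};d) - \rho\scal{\overline{x}}{d}$, and substituting the support-function formula together with $\partial\psi(\overline{x}) = \partial_L\phi(\overline{x}) + \rho\overline{x}$ collapses the $\rho\overline{x}$ terms to give $h'(\overline{x};d) = \max\{\scal{w}{d}: w\in\partial_L\phi(\overline{x})\}$. Taking $d = x-\overline{x}$ for $x\in S$ gives the stated formula, and the ``in particular'' assertion is the special case $S=\mathcal{H}$.

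Part~\ref{l:weaklycv_equi} is the one requiring the most care, because here $\overline{x}$ may lie on the boundary of $S$, so $\psi$ need not be continuous at $\overline{x}$ and the clean support-function formula of Part~\ref{l:weaklycvx_dderivative} is unavailable. I would instead argue directly from the subgradient inequality. By the workhorse identity, $0\in\partial_L\phi(\overline{x})$ is equivalent to $\rho\overline{x}\in\partial\psi(\overline{x})$, i.e.\ to $\psi(y)\geq\psi(\overline{x})+\scal{\rho\overline{x}}{y-\overline{x}}$ for all $y$ (only $y\in S$ matters, since $\psi=+\infty$ off $S$). For any $x\in S$ the whole segment $[\overline{x},x]$ lies in $S$, so $\phi\equiv h$ along it and the one-dimensional convexity of $t\mapsto\psi(\overline{x}+t(x-\overline{x}))$ guarantees that $\psi'(\overline{x};x-\overline{x})$ exists and equals $h'(\overline{x};x-\overline{x})+\rho\scal{\overline{x}}{x-\overline{x}}$. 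For the forward direction, feeding $y=\overline{x}+t(x-\overline{x})$ into the subgradient inequality, dividing by $t$ and letting $t\downarrow 0$ yields $\psi'(\overline{x};x-\overline{x})\geq\scal{\rho\overline{x}}{x-\overline{x}}$, which rearranges to $h'(\overline{x};x-\overline{x})\geq 0$. For the converse, the monotonicity of convex difference quotients gives $\psi(x)-\psi(\overline{x})\geq\psi'(\overline{x};x-\overline{x}) = h'(\overline{x};x-\overline{x})+\rho\scal{\overline{x}}{x-\overline{x}}\geq\scal{\rho\overline{x}}{x-\overline{x}}$ for every $x\in S$, which is exactly the subgradient inequality characterizing $\rho\overline{x}\in\partial\psi(\overline{x})$. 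The main obstacle throughout is handling boundary points of $S$ in Part~\ref{l:weaklycv_equi} without continuity of $\psi$; the device that resolves it is restricting to segments inside the convex set $S$, where $\phi$ agrees with $h$ and $\psi$ is a genuine one-dimensional convex function with well-defined one-sided derivatives.
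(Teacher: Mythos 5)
Your proof is correct and follows essentially the same route as the paper: the same regularization $h+\iota_S+\tfrac{\rho}{2}\|\cdot\|^2$, the same sum-rule identity $\partial_L(h+\iota_S)(x)=\partial\psi(x)-\rho x$, and the same support-function argument for part~\ref{l:weaklycvx_dderivative}. The only (cosmetic) difference is in part~\ref{l:weaklycv_equi}, where the paper recenters the quadratic as $h+\iota_S+\tfrac{\rho}{2}\|\cdot-\overline{x}\|^2$ and cites the standard directional-derivative characterization of convex minimizers, while you verify the subgradient inequality for $\psi$ directly via monotone difference quotients along segments in $S$ --- both are valid.
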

\begin{proof}
By assumption, there exists $\rho \geq 0$ such that $H :=h +\iota_S +\frac{\rho}{2}\|\cdot\|^2$ is a convex function. Using Lemma~\ref{l:calculus}\ref{l:calculus_sum}, we have that, for all $x\in \mathcal{H}$, $\partial H(x) =\partial_L H(x) =\partial_L(h+\iota_S)(x) +\rho x$, and so $\partial_L(h+\iota_S)(x) =\partial H(x) -\rho x$. Since $S$ is convex, it follows from the definition of directional derivative that, for all $x\in \mathcal{H}$,
\begin{equation}\label{e:dderivative}
h'(\overline{x};x-\overline{x}) \leq (h+\iota_S)'(\overline{x};x-\overline{x}) =H'(\overline{x};x-\overline{x}) -\scal{\rho \overline{x}}{x-\overline{x}},
\end{equation}
where the first inequality is an equality if $x\in S$.

\ref{l:weaklycvx_cvx}: Since $\partial H(x)$ is a closed convex set, so is $\partial_L(h+\iota_S)(x)$.

\ref{l:weaklycvx_dderivative}: Assume that $\overline{x}\in \inte S$ and $h$ is continuous at $\overline{x}$, then $h(x) =(h+\iota_S)(x)$ for $x$ near $\overline{x}$ and $H$ is a convex function which is continuous at $\overline{x}\in \inte S$ and $h$ is continuous at $\overline{x}$, and hence $\partial_L h(\overline{x}) =\partial_L(h+\iota_S)(\overline{x}) =\partial_L H(\overline{x}) -\rho \overline{x}\neq \varnothing$.

Now, let $x\in S$. By \cite[Theorem~17.18]{BC17}, $H'(\overline{x};x-\overline{x}) =\max\{\scal{u}{x-\overline{x}}: u\in \partial H(\overline{x})\} =\max\{\scal{v+\rho \overline{x}}{x-\overline{x}}: v\in \partial (h+\iota_S)(\overline{x})\}$, which combined with \eqref{e:dderivative} implies the desired claim.

\ref{l:weaklycv_equi}: Set $H_1 :=h +\iota_S +\frac{\rho}{2}\|\cdot-\overline{x}\|^2$. Then $H_1$ is also a convex function. We derive from Lemma~\ref{l:calculus}\ref{l:calculus_sum} that $\partial_L(h+\iota_S)(\overline{x}) =\partial H_1(\overline{x})$ and from \eqref{e:dderivative} that $h'(\overline{x};x-\overline{x})\geq 0$ for all $x\in S$ if and only if  $(h+\iota_S)'(\overline{x};x-\overline{x}) =H_1'(\overline{x};x-\overline{x})\geq 0$ for all $x\in \mathcal{H}$. The conclusion then follows from \cite[Theorem~16.3 and Proposition~17.3]{BC17}.
\end{proof}

\subsection*{Kurdyka--{\L}ojasiewicz property} 

Next, we recall the celebrated Kurdyka--{\L}ojasiewicz (KL) property \cite{Kur98,Loj63} which plays an important role in our convergence analysis later on. For each $\eta \in \left(0,+\infty\right]$, we denote by $\Phi_\eta$ the class of all continuous concave functions $\varphi\colon \left[0, \eta\right)\to \RP$ such that $\varphi(0) =0$ and $\varphi$ is continuously differentiable on $\left(0, \eta\right)$ with $\varphi' >0$.

Let $h\colon \mathcal{H}\to \left(-\infty, +\infty\right]$ be a proper lower
semicontinuous function. We say that $h$ satisfies the \emph{KL property} \cite{Kur98,Loj63} at $\overline{x}\in \dom\partial_L h$ if there exist a neighborhood $U$ of $\overline{x}$, $\eta \in \left(0,+\infty\right]$, and a function $\varphi\in \Phi_\eta$ such that, for all $x\in U$ with $h(\overline{x}) <h(x) <h(\overline{x})+\eta$, one has
\begin{equation*}
\varphi'(h(x)-h(\overline{x}))\dist(0,\partial_L h(x))\geq 1.
\end{equation*}
If $h$ satisfies the KL property at each point in $\dom\partial_L h$, then $h$ is called a \emph{KL function}. For a function $h$ satisfying the KL property at $\overline{x}\in \dom\partial_L h$, if the corresponding function $\varphi$ can be chosen as $\varphi(s) =\gamma s^{1-\alpha}$ for some $\gamma\in \RPP$ and $\alpha\in \left[0,1\right)$, then we say that $h$ has the \emph{KL property at $\overline{x}$ with an exponent of $\alpha$}. If $h$ is a KL function and has the same exponent $\alpha$ at any $\overline{x}\in \dom\partial_L h$, then $h$ is called a \emph{KL function with an exponent of $\alpha$}.

This definition encompasses broad classes of functions that arise in practical optimization problems. For example, it is known that if $h$ is a proper lower semicontinuous semi-algebraic function, then $h$ is a KL function with a suitable exponent of $\alpha\in \left[0,1\right)$. The semi-algebraic function covers many common nonsmooth functions that appear in modern optimization problems such as functions which can be written as maximum or minimum of finitely many polynomials, Euclidean norms and the eigenvalues and rank of a matrix. Also, sums, products, and quotients of semi-algebraic functions are still semi-algebraic. For some recent development of KL property, see  \cite{AB09,LP18}.

\begin{lemma}
\label{l:uniformKL}
Let $(x_n)_{n\in \mathbb{N}}$ be a bounded sequence in $\mathcal{H}$, let $\Omega$ be the set of cluster points of $(x_n)_{n\in \mathbb{N}}$, and let $h\colon \mathcal{H}\to \left(-\infty, +\infty\right]$ be a proper lower semicontinuous function that is constant on $\Omega$ and satisfies the KL property at each point of $\Omega$. Set $\Omega_0 :=\{\overline{x}\in \Omega: h(x_n)\to h(\overline{x}) \text{~as~} n\to +\infty\}$ and suppose that $\Omega_0\neq \varnothing$. Then there exist $\eta \in \left(0,+\infty\right]$, $\varphi\in \Phi_\eta$, and $n_0\in \mathbb{N}$ such that, for all $\overline{x}\in \Omega_0$,
\begin{equation}
\varphi'(h(x_n)-h(\overline{x}))\dist(0,\partial_L h(x_n))\geq 1
\end{equation}
whenever $n\geq n_0$ and $h(x_n) >h(\overline{x})$. Moreover, if $h$ satisfies the KL property at every point of $\Omega$ with an exponent of $\alpha$, then the function $\varphi$ can be chosen as $\varphi(s) =\gamma s^{1-\alpha}$ for some $\gamma\in \RPP$.
\end{lemma}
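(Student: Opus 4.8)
The plan is to uniformize the pointwise KL inequalities over the compact cluster set $\Omega$ by a compactness argument, and then to amalgamate the finitely many desingularizing functions so produced into a single $\varphi\in\Phi_\eta$. First I would record two facts about the bounded sequence $(x_n)_{n\in\mathbb{N}}$. Since it is bounded, its cluster set $\Omega$ is nonempty and compact, and $\dist(x_n,\Omega)\to 0$: otherwise some subsequence would stay at distance $\geq\varepsilon$ from $\Omega$, yet being bounded it would have a cluster point of $(x_n)$ lying outside $\Omega$, a contradiction. Next, as $h$ is constant on $\Omega$, write $h\equiv\zeta$ there; then the defining condition $h(x_n)\to h(\overline{x})$ of $\Omega_0$ does not depend on $\overline{x}\in\Omega$, so $\Omega_0\neq\varnothing$ forces $\Omega_0=\Omega$ together with $h(x_n)\to\zeta$. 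In particular $h(\overline{x})=\zeta$ for every $\overline{x}\in\Omega_0$, so the target inequality is genuinely a single uniform statement with the constant $\zeta$ in place of $h(\overline{x})$.

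Next I would invoke the KL property at each $\overline{x}\in\Omega$: there are an open neighborhood $U_{\overline{x}}$, a radius $\eta_{\overline{x}}\in\left(0,+\infty\right]$, and $\varphi_{\overline{x}}\in\Phi_{\eta_{\overline{x}}}$ with $\varphi_{\overline{x}}'(h(x)-\zeta)\dist(0,\partial_L h(x))\geq 1$ for all $x\in U_{\overline{x}}$ satisfying $\zeta<h(x)<\zeta+\eta_{\overline{x}}$. The family $\{U_{\overline{x}}\}_{\overline{x}\in\Omega}$ is an open cover of the compact set $\Omega$, so I extract a finite subcover $U_1,\dots,U_p$ with associated data $\eta_1,\dots,\eta_p$ and $\varphi_1,\dots,\varphi_p$. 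Setting $U:=\bigcup_{i=1}^{p}U_i$, $\eta:=\min_{1\leq i\leq p}\eta_i$, and $\varphi:=\sum_{i=1}^{p}\varphi_i$, I would check that $\varphi\in\Phi_\eta$: on $\left[0,\eta\right)$ it is a finite sum of continuous concave functions, hence continuous and concave, with $\varphi(0)=0$, and it is continuously differentiable on $\left(0,\eta\right)$ with $\varphi'=\sum_i\varphi_i'>0$. The key mechanism is that, since each $\varphi_i'>0$, one has $\varphi'(s)\geq\varphi_i'(s)$ for every $i$ and every $s\in\left(0,\eta\right)$, so the single $\varphi$ dominates all the local ones.

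It then remains to produce $n_0$. As $U$ is open and contains the compact set $\Omega$, the quantity $\delta:=\dist(\Omega,\mathcal{H}\setminus U)$ is strictly positive (take $\delta=+\infty$ if $U=\mathcal{H}$), so $\dist(x_n,\Omega)\to 0$ gives $x_n\in U$ for all large $n$; combined with $h(x_n)\to\zeta$, I obtain $n_0$ such that $x_n\in U$ and $h(x_n)<\zeta+\eta$ whenever $n\geq n_0$. Fixing such an $n$ with $h(x_n)>\zeta$ and choosing $i$ with $x_n\in U_i$, the bounds $\zeta<h(x_n)<\zeta+\eta\leq\zeta+\eta_i$ let me apply the local KL inequality to get $\varphi_i'(h(x_n)-\zeta)\dist(0,\partial_L h(x_n))\geq 1$; since $\varphi'(h(x_n)-\zeta)\geq\varphi_i'(h(x_n)-\zeta)$ and the distance is nonnegative, the same holds with $\varphi$, which is the asserted inequality because $\zeta=h(\overline{x})$ for every $\overline{x}\in\Omega_0$. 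For the exponent statement, each $\varphi_i$ may be chosen as $\gamma_i s^{1-\alpha}$ with a common exponent $\alpha$, so $\varphi=\sum_i\varphi_i=\gamma s^{1-\alpha}$ with $\gamma:=\sum_i\gamma_i\in\RPP$, of the required form.

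I expect the main difficulty to be organizational rather than conceptual: the two nonconstructive inputs, namely compactness of $\Omega$ (to pass to finitely many neighborhoods) and $\dist(x_n,\Omega)\to 0$ (to eventually trap $x_n$ inside $U$), must be dovetailed carefully, and the amalgamation step requires verifying that $\varphi=\sum_i\varphi_i$ truly lies in $\Phi_\eta$ and that its derivative dominates each $\varphi_i'$ on the common interval $\left(0,\eta\right)$.
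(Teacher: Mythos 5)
Your proof is correct and follows essentially the same route as the paper: uniformize the KL inequality over the compact cluster set $\Omega$ (on which $h$ is constant), show $\dist(x_n,\Omega)\to 0$ by the subsequence/contradiction argument, and combine the two with $h(x_n)\to h(\overline{x})$ to produce a single $n_0$ independent of $\overline{x}\in\Omega_0$. The only difference is that the paper obtains the uniformization step by citing \cite[Lemma~6]{BST14}, whereas you re-derive it from scratch via a finite subcover of $\Omega$ and the amalgamated desingularizer $\varphi=\sum_i\varphi_i$ with $\eta=\min_i\eta_i$ — which is precisely the argument behind that cited lemma, so nothing substantive changes.
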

\begin{proof}
Since $(x_n)_{n\in \mathbb{N}}$ is bounded, $\Omega$ is nonempty and compact. According to \cite[Lemma~6]{BST14}, there exists $\varepsilon >0$, $\eta >0$, and $\varphi\in \Phi_\eta$ such that
\begin{equation}\label{e:KLine}
\varphi(h(x)-h(\overline{x}))\dist(0,\partial_L h(x))\geq 1
\end{equation}
whenever $\dist(x,\Omega) <\varepsilon$ and $h(\overline{x}) <h(x) <h(\overline{x}) +\eta$. From the proof of \cite[Lemma~6]{BST14}, we also see that, if $h$ satisfies the KL property at every point of $\Omega$ with an exponent of $\alpha$, then the function $\varphi$ can be chosen as $\varphi(s) =\gamma s^{1-\alpha}$ for some $\gamma\in \RPP$.

We note that $\dist(x_n,\Omega)\to 0$ as $n\to +\infty$. Indeed, suppose otherwise. Then there exist $\overline{\varepsilon} >0$ and a subsequence $(x_{k_n})_{n\in \mathbb{N}}$ of $(x_n)_{n\in \mathbb{N}}$ such that, for all $n\in \mathbb{N}$, $\dist(x_{k_n},\Omega)\geq \overline{\varepsilon}$. Since $(x_{k_n})_{n\in \mathbb{N}}$ is also bounded, there exists a subsequence $(x_{l_{k_n}})_{n\in \mathbb{N}}$ such that $x_{l_{k_n}}\to x^*$. We have that $x^*\in \Omega$ and that, for all $n\in \mathbb{N}$, $\dist(x_{l_{k_n}},\Omega)\geq \overline{\varepsilon}$. By the continuity of the distance function (see, e.g., \cite[Example~1.48]{BC17}), $\dist(x^*,\Omega)\geq \overline{\varepsilon}$, which contradicts the fact that $x^*\in \Omega$.

Now, let $\overline{x}\in \Omega_0$. Since $\dist(x_n,\Omega)\to 0$ and $h(x_n)\to h(\overline{x})$ as $n\to +\infty$, one can find $n_0\in \mathbb{N}$ such that, for all $n\geq n_0$,
\begin{equation*}
\dist(x_n,\Omega) <\varepsilon \quad\text{and}\quad
h(x_n) <h(\overline{x}) +\eta.
\end{equation*}
Here, we note that $n_0$ does not depend on $\overline{x}$ because $h(\overline{x})$ is independent of $\overline{x}\in \Omega_0\subseteq \Omega$. The conclusion follows from \eqref{e:KLine} and its following remark.
\end{proof}

\section{Stationary points of fractional programs}

In this section, we introduce various versions of stationary points for fractional programs and examine their relationships.
\begin{definition}[Stationary points, lifted stationary points \& strong lifted stationary points]
For problem~\eqref{e:prob}, we say that $\overline{x}\in S$ is
\begin{enumerate}
\item
a \emph{(limiting) stationary point} if
$0 \in \partial_L(\frac{f}{g}+\iota_S)(\overline{x})$;
\item
a \emph{(limiting) lifted stationary point} if
$0 \in g(\overline{x})\partial_L(f+\iota_S)(\overline{x})-f(\overline{x})\partial_Lg(\overline{x});$
\item
a \emph{(limiting) strong lifted stationary point} if
$f(\overline{x})\partial_Lg(\overline{x}) \subseteq g(\overline{x})\partial_L(f+\iota_S)(\overline{x})$.
\end{enumerate}
\end{definition}

It is well known that a necessary condition for $\overline{x}\in S$ to be a local minimizer of $\frac{f}{g}$ on $S$ is $0\in \partial_L(\frac{f}{g}+\iota_S)(\overline{x})$. Thus, any local minimizer must be a stationary point. Next, we examine the relationships between the above three versions of stationary points.

\begin{lemma}[Stationary points vs. lifted stationary points]
\label{l:stationary}
Consider problem~\eqref{e:prob} in which $f,g\colon \mathcal{H}\to \left(-\infty,+\infty\right]$ are proper lower semicontinuous functions and $S$ is a nonempty closed subset of $\mathcal{H}$. Let $C$ be a nonempty closed subset of $\mathcal{H}$ such that $C\cap S\neq \varnothing$ and let $\overline{x}\in C\cap S$. Suppose that $g(\overline{x}) >0$ and that $f =f_1 +\iota_C$, where one of the following is satisfied:
\begin{enumerate}[label =(\alph*)]
\item\label{fLip}
$f_1$ is Lipschitz continuous around $\overline{x}$ and $\overline{x}\in \inte(C\cap S)$;
\item\label{fLip&reg}
$f_1$ is Lipschitz continuous around $\overline{x}$, $f_1$ and $C\cap S$ are regular at $\overline{x}$, and $g$ is positive around $\overline{x}$;
\item\label{fDiff}
$f_1$ is strictly differentiable at $\overline{x}$ and $g$ is positive around $\overline{x}$.
\end{enumerate}
Then the following statements hold:
\begin{enumerate}
\item\label{l:stationary_imply}
If $g$ is Lipschitz continuous around $\overline{x}$ and $\widehat{\partial} g$ is nonempty-valued around $\overline{x}$, then
\begin{equation}
\label{e:f/g+i_S}
\partial_L\left(\frac{f}{g}+\iota_S\right)(\overline{x}) \subseteq \frac{g(\overline{x})\partial_L(f+\iota_S)(\overline{x})-f(\overline{x})\partial_Lg(\overline{x})}{g(\overline{x})^2},
\end{equation}
in which case, if $\overline{x}$ is a stationary point of \eqref{e:prob}, then it is a lifted stationary point of \eqref{e:prob}.
\item\label{l:stationary_equi}
If $g$ is strictly differentiable at $\overline{x}$, then
\begin{equation}
\label{e:f/g+i_S=}
\partial_L\left(\frac{f}{g}+\iota_S\right)(\overline{x})
=\frac{g(\overline{x})\partial_L(f+\iota_S)(\overline{x})-f(\overline{x})\nabla g(\overline{x})}{g(\overline{x})^2},
\end{equation}
in which case, $\overline{x}$ is a stationary point of \eqref{e:prob} if and only if it is a lifted stationary point of \eqref{e:prob}.
\end{enumerate}
\end{lemma}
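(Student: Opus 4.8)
The core strategy is to reduce both statements to a single application of the quotient rule from Lemma~\ref{l:calculus}\ref{l:calculus_quotient}, with the role of the numerator played by $f = f_1 + \iota_C$ and the role of the constraint handled by incorporating $\iota_S$. The plan is to first observe that $\frac{f}{g} + \iota_S = \frac{f + g \iota_S}{g}$ pointwise wherever $g$ is positive, or more cleanly to write $\frac{f}{g} + \iota_S = \frac{f + \iota_S}{g}$ on $S$ (since on $S$ the indicator vanishes, and off $S$ both sides are $+\infty$); this algebraic identity lets me treat $f + \iota_S = f_1 + \iota_{C \cap S}$ as the effective numerator. The key technical requirement for invoking Lemma~\ref{l:calculus}\ref{l:calculus_quotient} is that the numerator and denominator be Lipschitz continuous around $\overline{x}$ with $g(\overline{x}) \neq 0$, and that $\widehat{\partial} g$ be nonempty-valued nearby. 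Since $f_1 + \iota_{C \cap S}$ is not Lipschitz (the indicator is only finite on $C \cap S$), the three alternative hypotheses \ref{fLip}, \ref{fLip&reg}, \ref{fDiff} are precisely what is needed to circumvent this.

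\textbf{Part \ref{l:stationary_imply} (the inclusion).} First I would establish, under each of the three hypotheses, that the quotient rule or a suitable sum-rule argument yields the inclusion $\partial_L(\frac{f}{g} + \iota_S)(\overline{x}) \subseteq \frac{\partial_L(g(\overline{x})(f + \iota_S))(\overline{x}) - (f+\iota_S)(\overline{x}) \partial_L g(\overline{x})}{g(\overline{x})^2}$. In case \ref{fLip}, since $\overline{x} \in \inte(C \cap S)$, near $\overline{x}$ we have $f + \iota_S = f_1$, which is Lipschitz, so the function $\frac{f}{g} + \iota_S$ coincides with the Lipschitz quotient $\frac{f_1}{g}$ locally and Lemma~\ref{l:calculus}\ref{l:calculus_quotient} applies directly. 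In cases \ref{fLip&reg} and \ref{fDiff} the indicator $\iota_{C \cap S}$ genuinely contributes, so I would apply the sum rule Lemma~\ref{l:calculus}\ref{l:calculus_sum} to split $g(\overline{x})(f + \iota_S) = g(\overline{x}) f_1 + g(\overline{x}) \iota_{C \cap S}$, using the regularity hypotheses in \ref{fLip&reg} to get the needed inclusion (and strict differentiability in \ref{fDiff} to get an exact formula). Then I simplify $\partial_L(g(\overline{x})(f+\iota_S))(\overline{x}) = g(\overline{x}) \partial_L(f + \iota_S)(\overline{x})$, using positivity of $g(\overline{x})$, to recover the stated right-hand side \eqref{e:f/g+i_S}. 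The stationarity consequence is then immediate: if $0 \in \partial_L(\frac{f}{g} + \iota_S)(\overline{x})$, the inclusion places $0$ in the lifted-stationarity set, and multiplying through by $g(\overline{x})^2 > 0$ gives $0 \in g(\overline{x}) \partial_L(f + \iota_S)(\overline{x}) - f(\overline{x}) \partial_L g(\overline{x})$.

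\textbf{Part \ref{l:stationary_equi} (the equality).} Here $g$ is strictly differentiable, so Lemma~\ref{l:calculus}\ref{l:calculus_quotient} gives the exact quotient formula \eqref{e:quotient'}, and the strict-differentiability branch of the sum rule in Lemma~\ref{l:calculus}\ref{l:calculus_sum} gives exact equalities throughout; combining these yields \eqref{e:f/g+i_S=}, and the stationary/lifted-stationary equivalence follows by the same multiply-by-$g(\overline{x})^2$ argument run in both directions. \textbf{The main obstacle} I anticipate is bookkeeping the hypotheses carefully: verifying in cases \ref{fLip&reg} and \ref{fDiff} that the calculus rules can be applied to $f_1 + \iota_{C \cap S}$ despite its lack of global Lipschitz continuity, and in particular checking that the nonemptiness-of-$\widehat{\partial} g$ and regularity conditions propagate correctly through the sum and quotient steps so that the indicator of $C \cap S$ reassembles (via a sum rule such as regularity of $C \cap S$) into $\partial_L(f + \iota_S)(\overline{x})$ as claimed rather than into a strictly larger set.
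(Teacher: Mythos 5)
Your plan correctly handles case \ref{fLip} (where $f+\iota_S=f_1+\iota_{C\cap S}$ is locally Lipschitz, so Lemma~\ref{l:calculus}\ref{l:calculus_quotient} applies verbatim), and your closing steps --- absorbing $\partial_L\iota_{C\cap S}(\overline{x})$, using $g(\overline{x})>0$, and upgrading inclusions to equalities under strict differentiability of $g$ --- match the paper. But in cases \ref{fLip&reg} and \ref{fDiff} there is a genuine gap at exactly the point you flag as ``the main obstacle'': you propose to apply the quotient rule to $\tfrac{f+\iota_S}{g}=\tfrac{f_1+\iota_{C\cap S}}{g}$ and only afterwards split the resulting numerator $\partial_L\bigl(g(\overline{x})(f+\iota_S)\bigr)(\overline{x})$ by the sum rule. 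The quotient rule of Lemma~\ref{l:calculus}\ref{l:calculus_quotient}, however, requires the numerator to be Lipschitz continuous around $\overline{x}$, which $f_1+\iota_{C\cap S}$ is not outside case \ref{fLip}; no version of the quotient rule for such a numerator is stated or proved in the paper, and you do not supply one. Acknowledging the obstacle does not discharge it --- it is the crux of the argument.

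The paper resolves this by reversing the order of the two calculus steps. Using positivity of $g$ near $\overline{x}$, it writes $\tfrac{f}{g}+\iota_S=\tfrac{f_1}{g}+\iota_{C\cap S}$ locally, applies the \emph{sum} rule first to peel off the indicator from the locally Lipschitz function $\tfrac{f_1}{g}$, i.e.
\begin{equation*}
\partial_L\left(\frac{f_1}{g}+\iota_{C\cap S}\right)(\overline{x})\subseteq \partial_L\left(\frac{f_1}{g}\right)(\overline{x})+\partial_L\iota_{C\cap S}(\overline{x}),
\end{equation*}
and only then applies the quotient rule to $\tfrac{f_1}{g}$, whose numerator $f_1$ \emph{is} Lipschitz. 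The regularity of $f_1$ and $C\cap S$ (case \ref{fLip&reg}) or strict differentiability of $f_1$ (case \ref{fDiff}) is then used to reassemble $\partial_L f_1(\overline{x})+\partial_L\iota_{C\cap S}(\overline{x})=\partial_L(f+\iota_S)(\overline{x})$, and the cone identity $\partial_L\iota_{C\cap S}(\overline{x})=\tfrac{1}{g(\overline{x})}\partial_L\iota_{C\cap S}(\overline{x})$ folds the normal cone into the fraction. If you reorder your steps in this way, your outline becomes the paper's proof; as written, the first step of your cases \ref{fLip&reg} and \ref{fDiff} invokes a lemma whose hypotheses are not met.
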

\begin{proof}
\ref{l:stationary_imply}: If \ref{fLip} holds, then $\partial_L(\frac{f}{g}+\iota_S)(\overline{x}) =\partial_L(\frac{f+\iota_S}{g})(\overline{x})$ and $f+\iota_S =f_1 +\iota_{C\cap S}$ is Lipschitz continuous around $\overline{x}$, hence \eqref{e:f/g+i_S} holds due to Lemma~\ref{l:calculus}\ref{l:calculus_quotient} and the fact that $g(\overline{x}) >0$.

In both cases \ref{fLip&reg} and \ref{fDiff}, $f_1$ is Lipschitz continuous at $\overline{x}$, and so is $f_1/g$. Using the fact that $g$ is positive around $\overline{x}$ and applying Lemma~\ref{l:calculus}\ref{l:calculus_sum} and then Lemma~\ref{l:calculus}\ref{l:calculus_quotient}, we have
\begin{equation*}
\partial_L\left(\frac{f}{g}+\iota_S\right)(\overline{x}) =\partial_L\left(\frac{f_1}{g}+\iota_{C\cap S}\right)(\overline{x}) \subseteq \partial_L\left(\frac{f_1}{g}\right)(\overline{x}) +\partial_L \iota_{C\cap S}(\overline{x})
\subseteq \frac{g(\overline{x})\partial_L f_1(\overline{x}) -f_1(\overline{x})\partial_L g(\overline{x})}{g(\overline{x})^2} +\partial_L \iota_{C\cap S}(\overline{x}).
\end{equation*}
As $f_1$ and $C\cap S$ are regular at $\overline{x}$ (if \ref{fLip&reg} holds) or $f_1$ is strictly differentiable at $\overline{x}$ (if \ref{fDiff} holds), also by Lemma~\ref{l:calculus}\ref{l:calculus_sum}, $\partial_L f_1(\overline{x}) +\partial_L \iota_{C\cap S}(\overline{x}) =\partial_L(f_1+\iota_{C\cap S})(\overline{x}) =\partial_L(f+\iota_S)(\overline{x})$. Noting that $f_1(\overline{x}) =f(\overline{x})$ and that $\partial_L \iota_{C\cap S}(\overline{x}) =\frac{1}{g(\overline{x})}\partial_L \iota_{C\cap S}(\overline{x})$ since $g(\overline{x}) >0$, we also obtain \eqref{e:f/g+i_S}.

\ref{l:stationary_equi}: As $g$ is strictly differentiable at $\overline{x}$ with $g(\overline{x}) \neq 0$, we note that $f_1/g$ is regular at $\overline{x}$ if $f_1$ is regular at $\overline{x}$ (by Lemma~\ref{l:calculus}\ref{l:calculus_quotient}), and that $f_1/g$ is strictly differentiable at $\overline{x}$ if $f_1$ is strictly differentiable at $\overline{x}$. Now, \eqref{e:f/g+i_S=} is obtained by using the same argument as in \ref{l:stationary_imply} and noting that the inclusions become equalities due to the strict differentiability of $g$ (in all of three cases), the regularity of $f_1/g$ and $C\cap S$ (in the case of \ref{fLip&reg}), or the strict differentiability of $f_1/g$ (in the case of \ref{fDiff}).
\end{proof}

From the definition, any strong lifted stationary point $\overline{x}$ with $\partial_L g(\overline{x})\neq \varnothing$ is also a lifted stationary point. Moreover, if $g$ is strictly differentiable, then strong lifted stationary points and lifted stationary points are the same. However, if $g$ is not strictly differentiable, then a lifted stationary point need not to be a strong lifted stationary point in general, as in the following example.

\begin{example}
Consider the following one-dimensional fractional program
\begin{equation}\label{e:ex}
\min_{x \in [-1,1]} \frac{x^2+1}{|x|+1}.
\end{equation}
Let $\overline{x}=0$, $f(x)=x^2+1$,  $g(x)=|x|+1$ and $S=[-1,1]$. Clearly, $\partial_L (f+\iota_S)(\overline{x})=\{0\}$ and $\partial_L g(\overline{x})=[-1,1]$. Then, $\overline{x}$ is a lifted stationary point because $0 \in g(\overline{x})\partial_L(f+\iota_S)(\overline{x})-f(\overline{x})\partial_Lg(\overline{x})=[-1,1]
$. On the other hand, $\overline{x}$ is not a strong lifted stationary point as
\begin{equation*}
[-1,1] = f(\overline{x})\partial_Lg(\overline{x}) \nsubseteq g(\overline{x})\partial_L(f+\iota_S)(\overline{x})=\{0\}.
\end{equation*}
Indeed, a direct verification shows that the lifted stationary points of \eqref{e:ex} are $-\sqrt{2}+1$, $0$, and $\sqrt{2}-1$; while the set of strong lifted stationary points of \eqref{e:ex} is $\{-\sqrt{2}+1,\sqrt{2}-1\}$, which coincides with the set of local/global minimizers of problem \eqref{e:ex}.
\end{example}

Finally, we establish the relationship between the strong lifted stationary points and the recently studied d(irectional)-stationary points in the difference-of-convex (DC) optimization literature \cite{PRA16,BH20}. Recall that $\overline{x}\in S$ is a \emph{d-stationary point} of a function $h$ on $S$ if, for all $x\in S$, $h'(\overline{x};x-\overline{x})\geq 0$.

\begin{lemma}[Strong lifted stationary point vs. d-stationary points]
Consider problem~\eqref{e:prob} in which $S$ is a nonempty closed convex subset of $\mathcal{H}$ and both $f+\iota_S$ and $g$ are proper lower semicontinuous weakly convex functions. Let $\overline{x}\in S$. Suppose that $g$ is continuous on an open set containing $S$ and that $g(\overline{x}) >0$. Then $\overline{x}$ is a strong lifted stationary point of \eqref{e:prob} if and only if it is a d-stationary point of $f -\frac{f(\overline{x})}{g(\overline{x})}g$ on $S$.
\end{lemma}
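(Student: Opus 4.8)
The plan is to collapse both notions into one and the same inequality between directional derivatives and then read them off against each other. Since $g(\overline{x})>0$, dividing the defining inclusion by $g(\overline{x})$ shows that $\overline{x}$ is a strong lifted stationary point if and only if $\theta\,\partial_L g(\overline{x})\subseteq\partial_L(f+\iota_S)(\overline{x})$, where $\theta:=f(\overline{x})/g(\overline{x})$. Write $\phi:=f+\iota_S$, $A:=\partial_L\phi(\overline{x})$ and $B:=\partial_L g(\overline{x})$. Because $g$ is weakly convex and continuous at $\overline{x}$, it is locally Lipschitz around $\overline{x}$, so $B$ is nonempty, convex and compact, and Lemma~\ref{l:weaklycvx}\ref{l:weaklycvx_dderivative} gives $g'(\overline{x};d)=\max_{v\in B}\scal{v}{d}$ for every $d\in\mathcal{H}$. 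On the numerator side I would pass to the convexification $H:=\phi+\tfrac{\rho}{2}\|\cdot\|^2$ (convex since $\phi$ is weakly convex), so that $A=\partial H(\overline{x})-\rho\overline{x}$ as in the proof of Lemma~\ref{l:weaklycvx}. Combining this with the convex characterization of the subdifferential through the directional derivative (cf.\ \cite[Theorem~16.3 and Proposition~17.3]{BC17}) yields the pointwise description
\[
w\in A\iff \scal{w}{d}\le\phi'(\overline{x};d)\quad\text{for all }d\in\mathcal{H}.
\]

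Using this, I would reformulate strong lifted stationarity as follows. The inclusion $\theta B\subseteq A$ means $\theta v\in A$ for every $v\in B$, which by the displayed equivalence says $\scal{\theta v}{d}\le\phi'(\overline{x};d)$ for all $v\in B$ and all $d$. For each fixed $d$, taking the maximum over the compact set $B$ and pulling the nonnegative scalar $\theta$ through the maximum turns this into
\[
\theta\,g'(\overline{x};d)=\theta\max_{v\in B}\scal{v}{d}=\max_{v\in B}\scal{\theta v}{d}\le\phi'(\overline{x};d)\quad\text{for all }d\in\mathcal{H}.
\]

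For the d-stationary side, I would first observe that for $x\in S$ the segment $[\overline{x},x]$ lies in $S$, so $\iota_S$ vanishes along it and $f'(\overline{x};x-\overline{x})=\phi'(\overline{x};x-\overline{x})$; since $g'(\overline{x};\cdot)$ is finite, the difference quotient splits and $(f-\theta g)'(\overline{x};x-\overline{x})=\phi'(\overline{x};x-\overline{x})-\theta g'(\overline{x};x-\overline{x})$. Hence d-stationarity of $f-\theta g$ on $S$ reads $\phi'(\overline{x};x-\overline{x})\ge\theta g'(\overline{x};x-\overline{x})$ for all $x\in S$. I would then upgrade the quantifier from feasible directions to all of $\mathcal{H}$: if $\phi'(\overline{x};d)=+\infty$ the inequality is automatic, while if $\phi'(\overline{x};d)<+\infty$ then $\overline{x}+t_0d\in\dom\phi\subseteq S$ for some small $t_0>0$, and positive homogeneity of $\phi'(\overline{x};\cdot)$ and $g'(\overline{x};\cdot)$ propagates the inequality from $x-\overline{x}$ (with $x:=\overline{x}+t_0d$) to $d$. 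Thus d-stationarity is equivalent to $\phi'(\overline{x};d)\ge\theta g'(\overline{x};d)$ for all $d\in\mathcal{H}$, which is precisely the reformulated strong lifted condition, and the equivalence follows.

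The main obstacle is handling the extended-valued, non-Lipschitz function $\phi=f+\iota_S$: unlike $g$, its directional derivative is not directly a support function, so the argument must route through the convexification $H$ and the feasible-direction reduction above. The one genuinely delicate point is the sign of $\theta$: the identity $\max_{v\in B}\scal{\theta v}{d}=\theta\max_{v\in B}\scal{v}{d}$ that makes the two reformulations literally coincide requires $\theta\ge 0$, i.e.\ $f(\overline{x})\ge 0$, which holds here because the numerator of \eqref{e:prob} is nonnegative over $S$ (as in all the motivating instances). For $\theta<0$ the maximum would become a minimum and the equivalence would degrade to the weaker lifted-stationarity relation, so the nonnegativity of the numerator is exactly where it is used.
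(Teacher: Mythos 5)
Your proof is correct and follows essentially the same route as the paper's: both translate the inclusion $\frac{f(\overline{x})}{g(\overline{x})}\,\partial_L g(\overline{x})\subseteq\partial_L(f+\iota_S)(\overline{x})$ and the d-stationarity condition into the common directional-derivative inequality $f'(\overline{x};\cdot)\geq\frac{f(\overline{x})}{g(\overline{x})}\,g'(\overline{x};\cdot)$ via the convexification device of Lemma~\ref{l:weaklycvx} and the support-function formula of Lemma~\ref{l:weaklycvx}\ref{l:weaklycvx_dderivative} for $g'$. The only differences are cosmetic --- you characterize subgradient membership over all directions in $\mathcal{H}$ and then upgrade quantifiers by positive homogeneity, whereas the paper applies Lemma~\ref{l:weaklycvx}\ref{l:weaklycv_equi} directly to $f-\frac{f(\overline{x})}{g(\overline{x})}\scal{v}{\cdot}$ for each fixed $v\in\partial_L g(\overline{x})$ --- and your explicit observation that the nonnegativity of $f(\overline{x})/g(\overline{x})$ is needed to pull the scalar through the maximum is a point the paper's proof leaves implicit.
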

\begin{proof}
Set $\overline{\theta} :=f(\overline{x})/g(\overline{x})$. First, $\overline{x}$ is a strong lifted stationary point of \eqref{e:prob} if and only if, for all $v\in \partial_L g(\overline{x})$, $0\in \partial_L(f+\iota_S)(\overline{x}) -\overline{\theta}v =\partial_L(f-\overline{\theta}\scal{v}{\cdot}+\iota_S)(\overline{x})$, which is equivalent to, for all $v\in \partial_L g(\overline{x})$ and all $x\in S$, $f'(\overline{x};x-\overline{x}) -\overline{\theta}\scal{v}{x-\overline{x}} =(f-\scal{\overline{\theta}v}{\cdot})'(\overline{x};x-\overline{x})\geq 0$ due to Lemma~\ref{l:weaklycvx}\ref{l:weaklycv_equi}. Now, as $g$ is weakly convex on $\mathcal{H}$ and continuous at $\overline{x}$, applying the last conclusion of Lemma~\ref{l:weaklycvx}\ref{l:weaklycvx_dderivative} with $h=g$, we have, for all $x\in \mathcal{H}$, $g'(\overline{x};x-\overline{x}) =\max\{\scal{v}{x-\overline{x}}: v\in \partial_L g(\overline{x})\}$, which completes the proof.
\end{proof}

\section{Extrapolated proximal subgradient (e-PSG) algorithm}
\label{s:ePSG}

In this section, we consider problem~\eqref{e:prob} under the following assumptions.

\begin{assumption}
\label{a:f}
$f =f^\mathfrak{s}+f^\mathfrak{n}$, where $f^\mathfrak{s}$ is a continuously differentiable convex function whose gradient $\nabla f^\mathfrak{s}$ is Lipschitz continuous with modulus $\ell$ on $\mathcal{H}$, and $f^\mathfrak{n}$ is a proper lower semicontinuous function, $S\cap \dom f\neq \varnothing$ and, for all $x\in S\cap \dom f$, $f(x)\geq 0$.
\end{assumption}

\begin{assumption}
\label{a:g}
$g$ is a proper lower semicontinuous function which is finite and positive on $S$, continuous on an open set containing $S$, and either weakly convex with modulus $\beta$ on an open convex set containing $S$, or regular and weakly convex with modulus $\beta$ on $S$.
\end{assumption}

We note that the nonnegative assumption of the numerator $f$ and the positivity assumption of the denominator $g$ are standard in the literature of fractional programs \cite{BotCse17,CFS85,Din67}. Also, these assumptions are easily satisfied for many practical optimization models in diverse areas, in particular, for all the motivating examples we mentioned in the introduction.
We now propose the following proximal subgradient algorithm with extrapolation for solving the nonsmooth and nonconvex fractional programming problem~\eqref{e:prob}. To do this, we define the following boundedness condition (BC):  There exist $m, M\in \RPP$ such that, for all $x\in S\cap \dom f$,
\begin{equation}\label{a:bound}
m \leq g(x) \leq M. \tag{BC}
\end{equation}

\begin{tcolorbox}[
	left=0pt,right=0pt,top=0pt,bottom=0pt,
	colback=blue!10!white, colframe=blue!50!white,
  	boxrule=0.2pt,
  	breakable]
\begin{algorithm}[Extrapolated proximal subgradient (e-PSG) algorithm]
\label{algo:main}
\step{}\label{step:initial}
Choose $x_{-1} =x_0\in S\cap \dom f$ and set $n =0$. Let $\delta\in \RPP$, let $\zeta\in \RPP$ be such that $1-\sqrt{\beta}\zeta >0$, and let
\begin{equation*}
\overline{\mu}\in \left[0, \frac{\delta(1-\sqrt{\beta}\zeta)\sqrt{mM}}{2M}\right)
\quad\text{and}\quad
\overline{\kappa}\in \left[0,\sqrt{\frac{m\delta(1-\sqrt{\beta}\zeta)}{\ell M}-\frac{2m\overline{\mu}}{\ell\sqrt{mM}}} \, \right),
\end{equation*}
where $\ell$ is defined in Assumption~\ref{a:f}, $\beta$ is defined in Assumption~\ref{a:g}, while $m$ and $M$ are given in \eqref{a:bound}. In the absence of \eqref{a:bound}, we let $\overline{\mu} =0$ and $\overline{\kappa} =0$.

\step{}\label{step:main}
Set $\theta_n =\frac{f(x_n)}{g(x_n)}$, let $g_n\in \partial_Lg(x_n)$, choose $\tau_n\in \mathbb{R}$ such that $0< \tau_n\leq 1/\max\{\sqrt{\beta}\theta_n/\zeta, \delta\}$.
Let $u_n =x_n +\kappa_n(x_n-x_{n-1})$ with $\kappa_n\in [0,\overline{\kappa}]$, $v_n =x_n +\mu_n(x_n-x_{n-1})$ with $\mu_n\in [0,\overline{\mu}\tau_n]$, and find
\begin{equation*}
x_{n+1} \in \argmin_{x\in S} \left(f^\mathfrak{n}(x) +f^\mathfrak{s}(u_n) +\scal{\nabla f^\mathfrak{s}(u_n)}{x-u_n} +\frac{1}{2\tau_n}\|x-v_n-\tau_n\theta_ng_n\|^2 +\frac{\ell}{2}\|x-u_n\|^2 \right).
\end{equation*}

\step{}
If a termination criterion is not met, let $n =n+1$ and go to Step~\ref{step:main}.
\end{algorithm}
\end{tcolorbox}

Before proceeding, we first make a few observations. Firstly, in the special case where $f^\mathfrak{s}\equiv 0$, $f^\mathfrak{n}$ is convex, $\kappa_n = 0$, $\mu_n=0$, $\ell=0$ and $g$ is continuously differentiable (and so, $g_n=\nabla g(x_n)$), Algorithm~\ref{algo:main} reduces to the proximal gradient algorithm proposed in \cite{BotCse17}. Secondly, in Step~\ref{step:main}, the part ``$f^\mathfrak{s}(u_n) +\scal{\nabla f^\mathfrak{s}(u_n)}{x-u_n}$'' serves as the linear approximation of $f^{\mathfrak{s}}$ at $u_n$. Although the
term ``$f^\mathfrak{s}(u_n)$'' can be removed as it does not contribute to the minimization problem, we prefer to leave it here for understanding the algorithm intuitively. Finally, it is worth noting
that when $\overline{\mu} < \frac{\delta(1-\sqrt{\beta}\zeta)\sqrt{mM}}{2M}$, then $\frac{m\delta(1-\sqrt{\beta}\zeta)}{\ell M}>\frac{2m\overline{\mu}}{\ell\sqrt{mM}}$, and so, the
choice of $\overline{\kappa}$ in Step~\ref{step:initial} makes sense.

\begin{remark}[Discussions on computing the subproblems]
In the above algorithm, the major computational cost lies in solving the subproblem in Step~\ref{step:main}. In Step~\ref{step:main}, finding $x_{n+1}$ is indeed equivalent to computing the proximal operator\footnote{The proximal operator of a function $h$ is denoted by ${\rm Prox}_h$ and is defined as ${\rm Prox}_h(x)= {\rm argmin}_y\{h(y)+\frac{1}{2}\|y-x\|^2\}$.} of $\frac{\tau_n}{1+ \ell \tau_n }\left(f^\mathfrak{n}+\iota_S\right)$ at the point $\frac{v_n+\tau_n\theta_ng_n+ \ell\tau_nu_n -\tau_n \nabla f^{\mathfrak{s}}(u_n)}{1+\ell\tau_n}$,
 where $f^\mathfrak{n}$ is the nonsmooth part of the numerator. This can be done efficiently for functions $f$ and sets $S$ with specific structures. For example,
\vspace{-0.2cm}
\begin{enumerate}
\item
In the case where $S$ is a polyhedral and $f^\mathfrak{n}$ is the maximum of finitely may affine functions, the optimization problem in Step~\ref{step:main} can be reformulated as a convex quadratic optimization problem with linear constraints, and so, can be solved by calling a QP solver. This, in particular, covers the motivating examples  \ref{r:ex_ssr} and \ref{r:ex_Sharpe} in the introduction.

\item
In the case of $S =\mathbb{R}^N$, $f^\mathfrak{s}$ is a convex quadratic function, $f^\mathfrak{n} =\iota_C$ with $C$ being the unit sphere (as in the motivating example~\ref{r:rayleigh}  in the introduction), the optimization problem in Step~\ref{step:main} reduces to computing the projection onto the unit sphere $C$ which  has a closed form solution.

\item
In the case of $f^\mathfrak{n}$ is the minimum of finitely many (nonconvex) quadratic function, that is, $f^\mathfrak{n}(x)=\min_{1 \leq i \leq m}\{x^\top A_ix+a_i^\top x+\alpha_i\}$ and $S=\{x: \|x\|^2 \leq \rho\}$, the optimization problem in Step~\ref{step:main} can be computed by solving $m$ many (nonconvex) quadratic optimization problem with a ball constraint. As each quadratic optimization problem with a ball constraint is a trust-region problem, and can be equivalently reformulated as either a
semi-definite program (SDP) or an eigenvalue problem. So, the subproblem can be solved by calling an SDP solver or an eigenvalue problem  solver.

\item
In the case of $S=\{x:q_i(x) \leq 0, i=1,\dots,m_1\}$ where $q_i$ are convex quadratic functions, and $f^\mathfrak{n}(x)=\max_{1 \leq i\leq m_2} h_i(x)$ where each $h_i$ is a convex quadratic function, the optimization problem in Step~\ref{step:main} can be reformulated as a convex quadratic optimization problem with convex quadratic constraints, and so, can be further rewritten as a semidefinite programming problem (SDP) and solved by calling an SDP solver.
\end{enumerate}
\end{remark}

\begin{remark}[Discussions of the extrapolation parameter]
We first note that our choice of the extrapolation parameter covers the popular extrapolation parameter used for restarted FISTA in the case where $g$ is convex (see, for example, \cite[Chapter~10]{Beck17} and \cite{Nes04}). To see this, as $g$ is convex, one has $\beta=0$. Choose $\overline{\mu}=0$, $\delta=\frac{\ell M}{m}$, and $\overline{\kappa}\in (0,1)$. Let $\kappa_n = \overline{\kappa}\frac{\nu_{n-1}-1}{\nu_n}$, where
\begin{equation*}
\nu_{-1}=\nu_0=1, \text{~and~} \nu_{n+1}=\frac{1+\sqrt{1+4 \nu_n^2}}{2},
\end{equation*}
and reset $\nu_{n-1}=\nu_n=1$ when $n=n_0,2n_0,3n_0,\dots$ for some integer $n_0$.
 In this case, it can be directly verified that $0\leq \kappa_n \leq \overline{\kappa} < 1$, and so, the requirement of our extrapolation parameter is satisfied. Also, it is worth noting that our proposed algorithm (Algorithm~\ref{algo:main}) allows one to perform extrapolation even when the smooth part of the numerator $f^{\mathfrak{s}} \equiv 0$ (as in the the motivating examples  \ref{r:ex_ssr} and \ref{r:ex_Sharpe} in the introduction).
\end{remark}

Next, we establish the subsequential convergence of Algorithm~\ref{algo:main}. To do this, we will need the following lemmas which will be used later on. The first lemma shows that our Assumption~\ref{a:g} on weak convexity  implies an important subgradient inequality. The second lemma is known as the decent lemma for differentiable function whose gradient is Lipschitz continuous.

\begin{lemma}[Subgradient inequality for weakly convex functions]
\label{l:subgradine}
Let $S$ be a nonempty closed convex subset of $\mathcal{H}$. Suppose that either $g$ is regular and weakly convex with modulus $\beta$ on $S$, or $g$ is weakly convex with modulus $\beta$ on an open convex set $O$ containing $S$. Then, for all $x, y\in S$ and $u \in \partial_L g(x)$,
\begin{equation*}
\scal{u}{y-x} \leq g(y) -g(x) +\frac{\beta}{2}\|y-x\|^2.
\end{equation*}
\end{lemma}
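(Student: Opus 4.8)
The plan is to reduce the claim to the classical convex subgradient inequality applied to an auxiliary convex function, and then to absorb the resulting quadratic correction into the term $\frac{\beta}{2}\|y-x\|^2$. In the regular case I would set $G := g +\iota_S +\frac{\beta}{2}\|\cdot\|^2$ and in the open-set case $G := g +\iota_O +\frac{\beta}{2}\|\cdot\|^2$; by the definition of weak convexity on $S$ (respectively on $O$), $G$ is convex in both cases. Since $\frac{\beta}{2}\|\cdot\|^2$ is continuously differentiable, Lemma~\ref{l:calculus}\ref{l:calculus_sum} gives $\partial_L G(x) =\partial_L(g+\iota_S)(x) +\beta x$ (respectively with $\iota_O$), and convexity of $G$ yields $\partial_L G(x) =\partial G(x)$. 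The crux is therefore to show that every $u\in \partial_L g(x)$ satisfies $u+\beta x\in \partial G(x)$; once this is established, the convex inequality $\scal{u+\beta x}{y-x}\leq G(y)-G(x)$ holds for $y\in S$, and expanding $G(y)-G(x) =g(y)-g(x) +\frac{\beta}{2}(\|y\|^2-\|x\|^2)$ together with the identity $\frac{\beta}{2}(\|y\|^2-\|x\|^2) -\beta\scal{x}{y-x} =\frac{\beta}{2}\|y-x\|^2$ delivers the stated inequality. (If $g(x)=+\infty$ then $\partial_L g(x)=\varnothing$ and the claim is vacuous, while if $g(y)=+\infty$ the inequality is trivial, so I may assume both values finite.)

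The implication $u\in \partial_L g(x)\Rightarrow u\in \partial_L(g+\iota_S)(x)$ is where the two hypotheses intervene, and it is the main obstacle. In the regular case I would invoke regularity to write $\partial_L g(x) =\widehat{\partial} g(x)$, and then observe that the Fr\'echet subdifferential can only grow when the domain is restricted: for $x\in S$, the defining $\liminf$ for $\widehat{\partial}(g+\iota_S)(x)$ is taken over $z\to x$ with $z\in S$ (the difference quotients at $z\notin S$ being $+\infty$), hence is no smaller than the $\liminf$ defining $\widehat{\partial} g(x)$. This yields $\widehat{\partial} g(x)\subseteq \widehat{\partial}(g+\iota_S)(x)\subseteq \partial_L(g+\iota_S)(x)$, so $u+\beta x\in \partial_L(g+\iota_S)(x)+\beta x =\partial G(x)$, as required.

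In the open-set case regularity is not assumed, but it is also not needed: since $O$ is open and $x\in S\subseteq O$, the indicator $\iota_O$ vanishes on a neighbourhood of $x$, so $g$ and $g+\iota_O$ coincide near $x$ and the limiting subdifferential, being a purely local object, satisfies $\partial_L g(x) =\partial_L(g+\iota_O)(x)$. Thus $u\in \partial_L g(x)$ gives directly $u+\beta x\in \partial_L(g+\iota_O)(x)+\beta x =\partial G(x)$, and since any $y\in S$ also lies in $O$ we have $\iota_O(y)=0$, so the same convex inequality and quadratic simplification conclude the argument. The only genuinely delicate point is the monotone behaviour of the Fr\'echet subdifferential under adding $\iota_S$ in the regular case; the open-set case is comparatively routine because openness localizes the indicator away from $x$.
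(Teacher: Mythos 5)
Your proof is correct and follows the paper's strategy almost verbatim: the same convex regularization $G=g+\iota_C+\frac{\beta}{2}\|\cdot\|^2$ (with $C=S$ or $C=O$), the same reduction to the convex subgradient inequality for $G$, and the same quadratic identity at the end; the open-set case is handled identically in both arguments via locality of $\partial_L$. The one place you diverge is the regular case: the paper deduces $\partial_L g(x)\subseteq \partial_L(g+\iota_S)(x)$ from the sum-rule equality $\partial_L(g+\iota_S)(x)=\partial_L g(x)+\partial_L\iota_S(x)$ of Lemma~\ref{l:calculus}\ref{l:calculus_sum} together with $0\in\partial_L\iota_S(x)$, whereas you use regularity only to pass to the Fr\'echet subdifferential and then observe directly from the defining $\liminf$ that $\widehat{\partial}g(x)\subseteq\widehat{\partial}(g+\iota_S)(x)\subseteq\partial_L(g+\iota_S)(x)$, since the difference quotients at $z\notin S$ are $+\infty$. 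Your variant is arguably the more robust of the two: the sum rule in Lemma~\ref{l:calculus}\ref{l:calculus_sum} is stated under a local Lipschitz hypothesis on one summand, which $\iota_S$ does not satisfy at boundary points of $S$, so the paper's citation is somewhat loose there, while your monotonicity argument needs no calculus rule at all and delivers exactly the one inclusion that is required. Both routes then conclude with the same computation, so the difference is confined to this single step.
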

\begin{proof}
Let $x, y\in S$. By assumption, $G :=g+\iota_C+\frac{\beta}{2}\|\cdot\|^2$ is a convex function for $C =S$ or $C =O$. This implies that   $\partial G(x) =\partial_L G(x) =\partial_L(g+\iota_C)(x) +\beta x$, where the second equality is from Lemma~\ref{l:calculus}\ref{l:calculus_sum}. If $C =O$ is an open set containing $S$, then $\partial_L(g+\iota_C)(x) =\partial_L g(x)$. In the case where $C =S$, since $S$ is convex and $g$ is regular on $S$, Lemma~\ref{l:calculus}\ref{l:calculus_sum} also implies that $\partial_L(g+\iota_C)(x) =\partial_L g(x) +\partial_L \iota_S(x)$. Noting that $0\in \partial_L \iota_S(x)$, we deduce that, in both cases, $\partial_L g(x) +\beta x\subseteq \partial_L(g+\iota_C)(x) +\beta x =\partial G(x)$.

Now, let any $u\in \partial_L g(x)$. Then $u +\beta x\in \partial G(x)$, and so
\begin{align*}
\scal{u}{y-x} =\scal{u+\beta x}{y-x} +\scal{-\beta x}{y-x}
&\leq G(y) -G(x) -\beta \scal{x}{y-x} \\
&=\left(g(y) +\iota_C(y) +\frac{\beta}{2}\|y\|^2\right) -\left(g(x) +\iota_C(x) +\frac{\beta}{2}\|x\|^2\right) -\beta\scal{x}{y-x} \\
&= g(y) -g(x) +\frac{\beta}{2}\|y-x\|^2,
\end{align*}
which completes the proof.
\end{proof}

\begin{lemma}[Descent lemma]
\label{l:descent}
Let $h\colon \mathcal{H}\to \mathbb{R}$ be a differentiable function whose gradient is Lipschitz continuous with modulus $\ell$. Then, for all $x, y\in \mathcal{H}$,
\begin{equation*}
h(y)\leq h(x) +\scal{\nabla h(x)}{y-x} +\frac{\ell}{2}\|y-x\|^2.
\end{equation*}
\end{lemma}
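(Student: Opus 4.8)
The plan is to prove this standard descent lemma by restricting $h$ to the line segment joining $x$ and $y$ and applying the fundamental theorem of calculus. Specifically, I would fix $x,y\in \mathcal{H}$ and define the auxiliary scalar function $\phi\colon [0,1]\to \mathbb{R}$ by $\phi(t) :=h(x+t(y-x))$. Since $h$ is differentiable, $\phi$ is differentiable on $[0,1]$ with derivative given by the chain rule, namely $\phi'(t) =\scal{\nabla h(x+t(y-x))}{y-x}$. This reduces the inequality about $h$ on $\mathcal{H}$ to a one-dimensional estimate along the segment.

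Next, I would apply the fundamental theorem of calculus to write
\begin{equation*}
h(y) -h(x) =\phi(1) -\phi(0) =\int_0^1 \phi'(t)\,dt =\int_0^1 \scal{\nabla h(x+t(y-x))}{y-x}\,dt.
\end{equation*}
Subtracting the linear term $\scal{\nabla h(x)}{y-x} =\int_0^1 \scal{\nabla h(x)}{y-x}\,dt$ from both sides then yields the representation
\begin{equation*}
h(y) -h(x) -\scal{\nabla h(x)}{y-x} =\int_0^1 \scal{\nabla h(x+t(y-x)) -\nabla h(x)}{y-x}\,dt.
\end{equation*}

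To finish, I would bound the integrand using the Cauchy--Schwarz inequality followed by the Lipschitz continuity of $\nabla h$. For each $t\in [0,1]$, Cauchy--Schwarz gives $\scal{\nabla h(x+t(y-x)) -\nabla h(x)}{y-x}\leq \|\nabla h(x+t(y-x)) -\nabla h(x)\|\,\|y-x\|$, and the Lipschitz condition bounds the first factor by $\ell\|t(y-x)\| =\ell t\|y-x\|$. Substituting and evaluating $\int_0^1 t\,dt =\tfrac{1}{2}$ produces the bound $\frac{\ell}{2}\|y-x\|^2$, which rearranges to the claimed inequality. There is no genuine obstacle here: the only points requiring mild care are justifying differentiation of $\phi$ via the chain rule (immediate since $h$ is differentiable and $t\mapsto x+t(y-x)$ is affine) and pulling the constant factors out of the integral; the whole argument is a routine combination of the fundamental theorem of calculus, Cauchy--Schwarz, and the Lipschitz hypothesis.
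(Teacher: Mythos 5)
Your argument is correct and complete; it is the standard proof of the descent lemma via the fundamental theorem of calculus along the segment, Cauchy--Schwarz, and the Lipschitz bound on the gradient. The paper itself does not write out a proof but simply cites \cite[Lemma 1.2.3]{Nes04} and \cite[Lemma~2.64(i)]{BC17}, and the argument given there is exactly the one you reproduce, so there is no substantive difference in approach.
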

\begin{proof}
This follows from \cite[Lemma 1.2.3]{Nes04}, see also \cite[Lemma~2.64(i)]{BC17}.
\end{proof}

We are now ready to state the subsequential convergence of Algorithm~\ref{algo:main}.

\begin{theorem}[Subsequential convergence]
\label{t:cvg}
Let $(x_n)_{n\in \mathbb{N}}$ be the sequence generated by Algorithm~\ref{algo:main}. Suppose that Assumptions~\ref{a:f} and \ref{a:g} hold, and that the set $S_0 :=\{x\in S: \frac{f(x)}{g(x)}\leq \frac{f(x_0)}{g(x_0)}\}$ is bounded. Then the following hold:
\begin{enumerate}
\item\label{t:cvg_decrease}
For all $n\in \mathbb{N}$, $x_n\in S\cap \dom f$ and
\begin{equation*}
\left(\frac{f(x_{n+1})}{g(x_{n+1})} +c\|x_{n+1}-x_n\|^2\right) +\alpha\|x_{n+1}-x_n\|^2\leq \frac{f(x_n)}{g(x_n)} +c\|x_n-x_{n-1}\|^2,
\end{equation*}
where
\begin{equation}\label{e:c&alpha}
\begin{cases}
c :=\frac{\ell\overline{\kappa}^2}{2m}+\frac{\overline{\mu}}{2\sqrt{mM}},\; \alpha :=\frac{\delta(1-\sqrt{\beta}\zeta)}{2M}-\frac{\overline{\mu}}{\sqrt{mM}}-\frac{\ell\overline{\kappa}^2}{2m} &\text{if \eqref{a:bound} holds},\\
c :=0,\quad \alpha :=\frac{\delta(1-\sqrt{\beta}\zeta)}{2M'} \text{~with~} M' :=\sup_{x\in S_0} g(x) &\text{otherwise}.
\end{cases}
\end{equation}
Consequently, the sequence $\left(\frac{f(x_n)}{g(x_n)}\right)_{n\in \mathbb{N}}$ is convergent.
\item\label{t:cvg_seq}
The sequence $(x_n)_{n\in \mathbb{N}}$ is bounded, asymptotically regular\footnote{A sequence $(x_n)$ is said to be \emph{asymptotically regular} if $\|x_{n+1}-x_n\|\to 0$ as $n\to +\infty$.}, and satisfies
$\sum_{n=0}^{+\infty} \|x_{n+1}-x_n\|^2 <+\infty.$
\item\label{t:cvg_crit}
If $\liminf_{n\to +\infty} \tau_n =\overline{\tau} >0$, then, for every cluster point $\overline{x}$ of $(x_n)_{n\in \mathbb{N}}$, it holds that $\overline{x}\in S\cap\dom f$, $\lim_{n\to +\infty} \frac{f(x_n)}{g(x_n)} =\frac{f(\overline{x})}{g(\overline{x})}$, and $\overline{x}$ is a lifted stationary point of $\eqref{e:prob}$.
\end{enumerate}
\end{theorem}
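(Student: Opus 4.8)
The plan is to extract everything from a single ``fundamental inequality'' obtained by comparing the Step~\ref{step:main} subproblem value at $x_{n+1}$ with its value at $x_n$, and then to feed this inequality into a telescoping argument and a limiting argument. Throughout write $\theta_n=f(x_n)/g(x_n)$ and note $\theta_n\geq0$ since $f\geq0$ and $g>0$ on $S\cap\dom f$. First I would check by induction that the iteration is well defined: denoting by $\Psi_n$ the Step~\ref{step:main} objective together with $\iota_S$, this function is proper, lower semicontinuous and coercive (the term $\tfrac{1}{2\tau_n}\|x-v_n-\tau_n\theta_ng_n\|^2$ dominates, using that a convex $f^\mathfrak{s}$ with $\ell$-Lipschitz gradient is bounded below by an affine-minus-$\tfrac{\ell}{2}\|\cdot\|^2$ function and that $f^\mathfrak{n}\geq-f^\mathfrak{s}$ on $S$), so the $\argmin$ is attained and finiteness of the optimal value forces $x_{n+1}\in S\cap\dom f$.

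For \ref{t:cvg_decrease} I start from $\Psi_n(x_{n+1})\leq\Psi_n(x_n)$. On the left I apply the descent lemma (Lemma~\ref{l:descent}) to absorb $f^\mathfrak{s}(u_n)+\scal{\nabla f^\mathfrak{s}(u_n)}{x_{n+1}-u_n}+\tfrac{\ell}{2}\|x_{n+1}-u_n\|^2\geq f^\mathfrak{s}(x_{n+1})$, and on the right I use convexity of $f^\mathfrak{s}$ to bound its linearization at $u_n$ by $f^\mathfrak{s}(x_n)$; this rewrites the inequality in terms of $f(x_{n+1})$ and $f(x_n)$. Expanding the proximal quadratic via $\|a-w\|^2-\|b-w\|^2=\|a-b\|^2+2\scal{a-b}{b-w}$ with $w=v_n+\tau_n\theta_ng_n$, then invoking the weak-convexity subgradient inequality (Lemma~\ref{l:subgradine}) to replace $\theta_n\scal{g_n}{x_{n+1}-x_n}$ by $\theta_n(g(x_{n+1})-g(x_n))+\tfrac{\theta_n\beta}{2}\|x_{n+1}-x_n\|^2$, the identity $f(x_n)=\theta_ng(x_n)$ collapses the $f$- and $g$-increments into $f(x_{n+1})-\theta_ng(x_{n+1})=g(x_{n+1})(\theta_{n+1}-\theta_n)$. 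The outcome is
\begin{equation*}
g(x_{n+1})(\theta_{n+1}-\theta_n)\leq-\tfrac12\Big(\tfrac{1}{\tau_n}-\theta_n\beta\Big)\|x_{n+1}-x_n\|^2+\tfrac{\mu_n}{\tau_n}\scal{x_{n+1}-x_n}{x_n-x_{n-1}}+\tfrac{\ell\kappa_n^2}{2}\|x_n-x_{n-1}\|^2.
\end{equation*}

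Now the bookkeeping. The step-size rule $\tfrac{1}{\tau_n}\geq\max\{\sqrt\beta\theta_n/\zeta,\delta\}$ gives $\tfrac{1}{\tau_n}-\theta_n\beta\geq\delta(1-\sqrt\beta\zeta)$, so the leading coefficient is at least $\tfrac{\delta(1-\sqrt\beta\zeta)}{2}$; a weighted Young inequality with weight $\sqrt{M/m}$ (using $\mu_n/\tau_n\leq\overline\mu$ and $\kappa_n\leq\overline\kappa$) splits the cross term into a $\|x_{n+1}-x_n\|^2$ part absorbed into the leading term and a $\|x_n-x_{n-1}\|^2$ part. Dividing by $g(x_{n+1})\in[m,M]$, with $1/g$ bounded by $1/M$ on the negative leading term and by $1/m$ on the nonnegative remainder, the weight $\sqrt{M/m}$ is exactly what produces the stated $c$ and $\alpha$, and $\alpha>0$ is precisely the Step~\ref{step:initial} condition on $\overline\mu,\overline\kappa$. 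In the absence of \eqref{a:bound} one has $\overline\mu=\overline\kappa=0$, so the cross and $\kappa$ terms vanish and the fundamental inequality already gives $\theta_{n+1}\leq\theta_n$ from positivity of $g$ alone; this breaks the apparent circularity, letting me prove inductively that every $x_n$ lies in $S_0$, whence $g(x_{n+1})\leq M'=\sup_{S_0}g<+\infty$ and the division yields $\alpha=\tfrac{\delta(1-\sqrt\beta\zeta)}{2M'}$. Convergence of $(\theta_n)$ then follows since $P_n:=\theta_n+c\|x_n-x_{n-1}\|^2$ is nonincreasing and bounded below by $0$.

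For \ref{t:cvg_seq}, summing $\alpha\|x_{n+1}-x_n\|^2\leq P_n-P_{n+1}$ telescopes to $\sum\|x_{n+1}-x_n\|^2<+\infty$, hence asymptotic regularity; boundedness holds because $\theta_n\leq P_n\leq P_0=\theta_0$ confines the sequence to the bounded set $S_0$. For \ref{t:cvg_crit} fix a cluster point $\overline{x}=\lim x_{n_k}$; asymptotic regularity gives $u_{n_k},v_{n_k},x_{n_k+1}\to\overline{x}$, continuity of $g$ gives $g(x_{n_k})\to g(\overline{x})>0$, and $\theta_n\to\theta^\ast:=\lim_nP_n$. The first key point is to upgrade lower semicontinuity to genuine convergence of function values: inserting $x=\overline{x}$ into $\Psi_{n_k}(x_{n_k+1})\leq\Psi_{n_k}(\overline{x})$ and letting $k\to\infty$ (every non-$f^\mathfrak{n}$ term tends to $0$, with $\liminf\tau_n=\overline\tau>0$ keeping $1/\tau_{n_k}$ bounded) yields $\limsup_kf^\mathfrak{n}(x_{n_k+1})\leq f^\mathfrak{n}(\overline{x})$, which with lower semicontinuity forces $f^\mathfrak{n}(x_{n_k+1})\to f^\mathfrak{n}(\overline{x})$; hence $f(x_{n_k+1})\to f(\overline{x})$, $\overline{x}\in\dom f$, and $\theta^\ast=f(\overline{x})/g(\overline{x})$. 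The second key point is the stationarity: by Fermat's rule and the sum rule (Lemma~\ref{l:calculus}\ref{l:calculus_sum}) the subproblem gives $\zeta_n:=-\nabla f^\mathfrak{s}(u_n)-\tfrac{1}{\tau_n}(x_{n+1}-v_n)+\theta_ng_n-\ell(x_{n+1}-u_n)\in\partial_L(f^\mathfrak{n}+\iota_S)(x_{n+1})$; since $g$ is locally Lipschitz (weakly convex and continuous), $\partial_Lg$ is locally bounded, so after passing to a further subsequence $g_{n_k}\to\overline g\in\partial_Lg(\overline{x})$ by~\eqref{e:robustness}, and $\zeta_{n_k}\to\theta^\ast\overline g-\nabla f^\mathfrak{s}(\overline{x})$. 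The just-proved value convergence $f^\mathfrak{n}(x_{n_k+1})\to f^\mathfrak{n}(\overline{x})$ is exactly what lets me apply~\eqref{e:robustness} to get $\theta^\ast\overline g-\nabla f^\mathfrak{s}(\overline{x})\in\partial_L(f^\mathfrak{n}+\iota_S)(\overline{x})$; adding $\nabla f^\mathfrak{s}(\overline{x})$ and using the sum rule gives $\theta^\ast\overline g\in\partial_L(f+\iota_S)(\overline{x})$, and multiplying by $g(\overline{x})$ produces $f(\overline{x})\overline g\in g(\overline{x})\partial_L(f+\iota_S)(\overline{x})$ with $\overline g\in\partial_Lg(\overline{x})$, i.e. $0\in g(\overline{x})\partial_L(f+\iota_S)(\overline{x})-f(\overline{x})\partial_Lg(\overline{x})$, the required lifted stationarity. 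I expect the main obstacle to be this part~\ref{t:cvg_crit}: securing true convergence (not merely lower semicontinuity) of $f^\mathfrak{n}(x_{n_k+1})$ and the boundedness and convergence of the denominator subgradients $g_{n_k}$, both indispensable for invoking closedness of $\partial_L$; the constant-chasing in \ref{t:cvg_decrease} is delicate but routine by comparison.
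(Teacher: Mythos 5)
Your proposal is correct and, for parts \ref{t:cvg_decrease} and \ref{t:cvg_seq}, follows essentially the same route as the paper: the comparison of the subproblem objective at $x_{n+1}$ and at $x_n$, upgraded via the descent lemma and convexity of $f^\mathfrak{s}$, combined with Lemma~\ref{l:subgradine}, the identity $f(x_n)=\theta_n g(x_n)$, Young's inequality with weight $\sqrt{m/M}$, and division by $g(x_{n+1})$, with the two cases according to whether \eqref{a:bound} holds, is exactly the paper's derivation of its decrease inequality and of the constants $c$ and $\alpha$. The only genuine difference is in the last step of part \ref{t:cvg_crit}. The paper keeps the inequality \eqref{e:fxfx+} for \emph{all} $x\in S$, passes to the limit along the subsequence, and concludes that $\overline{x}$ globally minimizes the auxiliary function $\varphi(x)=f(x)+(\tfrac{1}{2\overline{\tau}}+\tfrac{\ell}{2})\|x-\overline{x}\|^2-\theta^\ast\scal{\overline{g}}{x}$ over $S$, so Fermat's rule is applied only once, at the limit point; you instead apply Fermat's rule to each subproblem, obtain $\zeta_n\in\partial_L(f^\mathfrak{n}+\iota_S)(x_{n+1})$, and pass to the limit using the robustness property \eqref{e:robustness}. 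Both arguments hinge on the same key facts, which you correctly isolate: the value convergence $f(x_{n_k+1})\to f(\overline{x})$ (needed by you to invoke \eqref{e:robustness} for $\partial_L(f^\mathfrak{n}+\iota_S)$, and by the paper to identify $\theta^\ast=f(\overline{x})/g(\overline{x})$) and the extraction of a convergent subsequence $g_{n_k}\to\overline{g}\in\partial_L g(\overline{x})$. The paper's variational-inequality route is marginally more economical in that it never needs the closedness of $\partial_L(f^\mathfrak{n}+\iota_S)$; your route is the more standard ``Fermat plus closed graph'' template and generalizes more readily to inexact subproblem solves. Your additional remark on well-posedness of the $\argmin$ (coercivity of the subproblem objective) is a point the paper leaves implicit.
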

\begin{proof}
\ref{t:cvg_decrease}\&\ref{t:cvg_seq}: First, it is clear that, for all $n\in \mathbb{N}$,
$x_n\in S\cap \dom f$, and so
\begin{equation}\label{e:positive}
g(x_n) >0 \text{~~and~~} \theta_n =\frac{f(x_n)}{g(x_n)}\geq 0.
\end{equation}
We see that, for all $n\in \mathbb{N}$ and $x\in S$,
\begin{align*}
&\phantom{=\ \ } f(x) +\frac{1}{2\tau_n}\|x-v_n-\tau_n\theta_ng_n\|^2 +\frac{\ell}{2}\|x-u_n\|^2 \\
& = f^\mathfrak{n}(x) +f^\mathfrak{s}(x) +\frac{1}{2\tau_n}\|x-v_n-\tau_n\theta_ng_n\|^2 +\frac{\ell}{2}\|x-u_n\|^2 \\
& \geq f^\mathfrak{n}(x) +f^\mathfrak{s}(u_n) +\scal{\nabla f^\mathfrak{s}(u_n)}{x-u_n} +\frac{1}{2\tau_n}\|x-v_n-\tau_n\theta_ng_n\|^2 +\frac{\ell}{2}\|x-u_n\|^2 \\
& \geq f^\mathfrak{n}(x_{n+1}) +f^\mathfrak{s}(u_n) +\scal{\nabla f^\mathfrak{s}(u_n)}{x_{n+1}-u_n} +\frac{1}{2\tau_n}\|x_{n+1}-v_n-\tau_n\theta_ng_n\|^2 +\frac{\ell}{2}\|x_{n+1}-u_n\|^2\\
& \geq f^\mathfrak{n}(x_{n+1}) +f^\mathfrak{s}(x_{n+1}) -\frac{\ell}{2}\|x_{n+1}-u_n\|^2 +\frac{1}{2\tau_n}\|x_{n+1}-v_n-\tau_n\theta_ng_n\|^2 +\frac{\ell}{2}\|x_{n+1}-u_n\|^2 \\
& = f(x_{n+1})+\frac{1}{2\tau_n}\|x_{n+1}-v_n-\tau_n\theta_ng_n\|^2,
\end{align*}
where the first inequality follows from the convexity of $f^\mathfrak{s}$, the second inequality is from the definition of $x_{n+1}$ in Step~\ref{step:main} of the algorithm, and the last inequality follows from the fact that $f^\mathfrak{s}$ is a differentiable function whose gradient is Lipschitz continuous with modulus $\ell$ (Lemma~\ref{l:descent} with $h=f^{\mathfrak{s}}$, $y=x_{n+1}$ and
$x=u_n$).
Therefore, for all $n\in \mathbb{N}$ and $x\in S$,
\begin{equation}\label{e:fxfx+}
f(x)\geq f(x_{n+1}) +\frac{1}{2\tau_n}(\|x_{n+1}-v_n\|^2-\|x-v_n\|^2) -\theta_n\scal{g_n}{x_{n+1}-x} -\frac{\ell}{2}\|x-u_n\|^2.
\end{equation}
Letting $x =x_n$ and noting that $x_{n+1}-v_n =(x_{n+1}-x_n) -\mu_n(x_n-x_{n-1})$, $x_n-v_n =-\mu_n(x_n-x_{n-1})$, and $x_n-u_n =-\kappa_n(x_n-x_{n-1})$, we have
\begin{align*}
f(x_n) &\geq f(x_{n+1}) +\frac{1}{2\tau_n}\left(\|x_{n+1}-x_n\|^2 -2\mu_n\scal{x_{n+1}-x_n}{x_n-x_{n-1}}\right) -\theta_n\scal{g_n}{x_{n+1}-x_n} -\frac{\ell\kappa_n^2}{2}\|x_n-x_{n-1}\|^2.
\end{align*}
Next, let $\omega\in \RPP$. By Young's inequality,
\begin{equation*}
\scal{x_{n+1}-x_n}{x_n-x_{n-1}}\leq \frac{1}{2\omega}\|x_{n+1}-x_n\|^2 +\frac{\omega}{2}\|x_n-x_{n-1}\|^2.
\end{equation*}
Since $x_n, x_{n+1} \in S$ and $g_n \in \partial_L g(x_n)$, Lemma~\ref{l:subgradine} implies that
\begin{equation*}
\scal{g_n}{x_{n+1}-x_n} \leq g(x_{n+1})-g(x_n)+ \frac{\beta}{2}\|x_{n+1}-x_n\|^2.
\end{equation*}
Combining the three above inequalities yields
\begin{align*}
f(x_n) &\geq f(x_{n+1}) +\frac{1}{2}\left(\frac{1}{\tau_n}-\beta\theta_n-\frac{\mu_n}{\omega\tau_n}\right)\|x_{n+1}-x_n\|^2 +\theta_n(g(x_n)-g(x_{n+1})) -\frac{1}{2}\left(\ell\kappa_n^2+\frac{\omega\mu_n}{\tau_n}\right)\|x_n-x_{n-1}\|^2.
\end{align*}
Since $1/\tau_n\geq \max\{\sqrt{\beta}\theta_n/\zeta, \delta\}\geq \sqrt{\beta}\theta_n/\zeta$ (and so, $\frac{1}{\tau_n}-\beta\theta_n \geq \frac{1-\sqrt{\beta}\zeta}{\tau_n}$) and $\theta_n =f(x_n)/g(x_n)$, dividing $g(x_{n+1}) >0$ on both sides, it follows that
\begin{equation}\label{e:F-decrease}
\frac{f(x_n)}{g(x_n)} +\frac{1}{2g(x_{n+1})}\left(\ell\kappa_n^2+\frac{\omega\mu_n}{\tau_n}\right)\|x_n-x_{n-1}\|^2 \geq \\ \frac{f(x_{n+1})}{g(x_{n+1})} +\frac{1}{2g(x_{n+1})}\left(\frac{1-\sqrt{\beta}\zeta}{\tau_n}-\frac{\mu_n}{\omega\tau_n}\right)\|x_{n+1}-x_n\|^2.
\end{equation}
We now distinguish two following cases.

\emph{Case~1:} \eqref{a:bound} holds. Combining with $\kappa_n\leq \overline{\kappa}$, $\mu_n\leq \overline{\mu}\tau_n$, $1/\tau_n\geq \delta$, and choosing $\omega:=\sqrt{m/M}$, we derive from \eqref{e:F-decrease} that
\begin{equation*}
\frac{f(x_n)}{g(x_n)} +\frac{\ell\overline{\kappa}^2+\omega\overline{\mu}}{2m}\|x_n-x_{n-1}\|^2 \geq \frac{f(x_{n+1})}{g(x_{n+1})} +\left(\frac{\delta(1-\sqrt{\beta}\zeta)}{2M}-\frac{\overline{\mu}}{2M\omega}\right)\|x_{n+1}-x_n\|^2,
\end{equation*}
which means
\begin{equation*}
\frac{f(x_n)}{g(x_n)} +\left(\frac{\ell\overline{\kappa}^2}{2m}+\frac{\overline{\mu}}{2\sqrt{mM}}\right)\|x_n-x_{n-1}\|^2 \geq \frac{f(x_{n+1})}{g(x_{n+1})} +\left(\frac{\delta(1-\sqrt{\beta}\zeta)}{2M}-\frac{\overline{\mu}}{2\sqrt{mM}}\right)\|x_{n+1}-x_n\|^2.
\end{equation*}
Setting $F_n :=\frac{f(x_n)}{g(x_n)} +c\|x_n-x_{n-1}\|^2$, we deduce that
\begin{equation}\label{e:decrease}
F_{n+1} +\alpha\|x_{n+1}-x_n\|^2\leq F_n.
\end{equation}
From the choice of $\overline{\kappa}$, we have $\frac{\ell\overline{\kappa}^2}{2m} <\frac{\delta(1-\sqrt{\beta}\zeta)}{2M} -\frac{\overline{\mu}}{\sqrt{mM}}$. Thus, $\alpha >0$ and the sequence $(F_n)_{n\in \mathbb{N}}$ is nonincreasing. As $F_n$ is nonnegative, $(F_n)_{n\in \mathbb{N}}$ is a convergent sequence, say $F_n \to \overline{F}$. Furthermore, one also has from \eqref{e:decrease} that, for any positive integer $m$,
\begin{equation*}
\sum_{n=0}^m \alpha\|x_{n+1}-x_n\|^2 \leq \sum_{n=0}^m (F_n-F_{n+1}) =F_0-F_{m+1} \leq F_0.
\end{equation*}
It follows that
\begin{equation*}
\sum_{n=0}^{+\infty} \|x_{n+1}-x_n\|^2 <+\infty.
\end{equation*}
In particular, $x_{n+1}-x_n \to 0$ as $n\to +\infty$, and so
\begin{equation*}
\frac{f(x_n)}{g(x_n)} =F_n -c\|x_n-x_{n-1}\|^2 \to \overline{F} \quad\text{as~} n\to +\infty.
\end{equation*}
Next, to see the boundedness of $(x_n)_{n\in \mathbb{N}}$, observe that
\begin{equation*}
\frac{f(x_n)}{g(x_n)} \leq F_n \leq F_0=\frac{f(x_0)}{g(x_0)} +c\|x_{0}-x_{-1}\|^2=\frac{f(x_0)}{g(x_0)}.
\end{equation*}
So, $x_n \in S_0 =\{x \in S: \frac{f(x)}{g(x)} \leq \frac{f(x_0)}{g(x_0)}\}$, and hence $(x_n)_{n\in \mathbb{N}}$ is bounded by the assumption that $S_0$ is bounded.

\emph{Case~2}: \eqref{a:bound} does not hold. Then, by the construction of the algorithm,  $\overline{\mu} =\overline{\kappa} =0$, so $\mu_n =\kappa_n =0$ for all $n\in \mathbb{N}$ and \eqref{e:F-decrease} becomes
\begin{equation*}
\frac{f(x_n)}{g(x_n)} \geq \frac{f(x_{n+1})}{g(x_{n+1})} +\frac{1-\sqrt{\beta}\zeta}{2\tau_ng(x_{n+1})}\|x_{n+1}-x_n\|^2,
\end{equation*}
which implies that $(\theta_n)_{n\in \mathbb{N}}=(\frac{f(x_n)}{g(x_n)})_{n\in \mathbb{N}}$ is nonincreasing. As $(\theta_n)_{n\in \mathbb{N}}$ is bounded below, it is  convergent. Therefore, for all $n\in \mathbb{N}$, $x_n\in S_0 =\{x\in S: \frac{f(x)}{g(x)}\leq \frac{f(x_0)}{g(x_0)}\}$, and the sequence $(x_n)_{n\in \mathbb{N}}$ is thus bounded. Combining with the continuity of $g$ on $S$ and the boundedness of $S_0$, one has $\sup_{n\in \mathbb{N}} g(x_n) \leq M' =\sup_{x \in S_0} g(x) <+\infty$. Since $1/\tau_n\geq \delta$, it follows that
\begin{equation}\label{e:xx+}
\frac{f(x_{n+1})}{g(x_{n+1})} +\frac{\delta(1-\sqrt{\beta}\zeta)}{2M'}\|x_{n+1}-x_n\|^2 \leq \frac{f(x_n)}{g(x_n)}.
\end{equation}
The asymptotic regularity of $(x_n)_{n\in \mathbb{N}}$ follows from the convergence of $(\theta_n)_{n\in \mathbb{N}}$ and \eqref{e:xx+}. Also, telescoping \eqref{e:xx+} yields
\begin{equation*}
\sum_{n=0}^{+\infty} \|x_{n+1}-x_n\|^2 <+\infty.
\end{equation*}

\ref{t:cvg_crit}: Let $\overline{x}$ be any cluster point of $(x_n)_{n\in \mathbb{N}}$ and let $(x_{k_n})_{n\in \mathbb{N}}$ be a subsequence of $(x_n)_{n\in \mathbb{N}}$ such that $x_{k_n}\to \overline{x}$. Then $\overline{x}\in S$ and, by the asymptotic regularity, $x_{k_n-1}\to \overline{x}$ and also $u_{k_n-1}\to \overline{x}$ and $v_{k_n-1}\to \overline{x}$. We have from \eqref{e:fxfx+} that, for all $n\in \mathbb{N}$ and $x\in S$,
\begin{equation}\label{e:fxkn}
f(x)\geq f(x_{k_n}) -\frac{1}{2\tau_{k_n-1}}\|x-v_{k_n-1}\|^2 -\theta_{k_n-1}\scal{g_{k_n-1}}{x_{k_n}-x} -\frac{\ell}{2}\|x-u_{k_n-1}\|^2.
\end{equation}
Since $g$ is continuous on an open set containing $S$, we have $g(x_{k_n})\to g(\overline{x}) >0$ and, by \eqref{e:robustness} and passing to a subsequence if necessary, we may assume that $g_{k_n-1}\to \overline{g}\in \partial_Lg(\overline{x})$. Letting $x =\overline{x}$ and $n\to +\infty$ in \eqref{e:fxkn} and noting that $\liminf_{n\to +\infty} \tau_n =\overline{\tau} >0$, we get $\limsup_{n\to +\infty} f(x_{k_n})\leq f(\overline{x})$. This together with the lower semicontinuity of $f$ implies that $\lim_{n\to +\infty} f(x_{k_n}) =f(\overline{x})$. It then follows that
\begin{equation*}
\lim_{n\to +\infty} \frac{f(x_n)}{g(x_n)} =\lim_{n\to +\infty} \frac{f(x_{k_n})}{g(x_{k_n})} =\frac{f(\overline{x})}{g(\overline{x})}.
\end{equation*}
Now, letting $n\to +\infty$ in \eqref{e:fxkn}, one has, for all $x\in S$,
\begin{equation*}
f(x)-f(\overline{x})\geq -\frac{1}{2\overline{\tau}}\|x-\overline{x}\|^2 -\frac{f(\overline{x})}{g(\overline{x})}\scal{\overline{g}}{\overline{x}-x} -\frac{\ell}{2}\|x-\overline{x}\|^2,
\end{equation*}
or equivalently, for all $x \in S$,
\begin{equation*}
\varphi(x)\geq \varphi(\overline{x}),
\quad\text{where~} \varphi(x) :=f(x) +\left(\frac{1}{2\overline{\tau}}+\frac{\ell}{2}\right)\|x-\overline{x}\|^2-\frac{f(\overline{x})}{g(\overline{x})}\scal{\overline{g}}{x}.
\end{equation*}
We must have $0\in \partial_L(\varphi+\iota_S)(\overline{x})$, and so
$\frac{f(\overline{x})}{g(\overline{x})}\overline{g}\in \partial_L(f+\iota_S)(\overline{x})$.
In particular,  $\overline{x}\in S\cap\dom f$.
Since $\overline{g}\in \partial_Lg(\overline{x})$, we obtain that
\begin{equation*}
0\in g(\overline{x})\partial_L(f+\iota_S)(\overline{x})-f(\overline{x})\partial_Lg(\overline{x}),
\end{equation*}
and the proof is complete.
\end{proof}

Next, we consider the following assumption.

\begin{assumption}
\label{a:add}
$f^\mathfrak{n} =f^\mathfrak{l} +\iota_C$, where $C$ is a nonempty closed subset of $\mathcal{H}$ such that $C\cap S\neq \varnothing$ and one of the following is satisfied:
\begin{enumerate}[label =(\alph*)]
\item\label{a:fLip}
$f^\mathfrak{l}$ is locally Lipschitz continuous on $\mathcal{H}$ and $C =S =\mathcal{H}$;
\item\label{a:fLip&reg}
$f^\mathfrak{l}$ is locally Lipschitz continuous on an open set containing $S\cap \dom f$, both $f^\mathfrak{l}$ and $C\cap S$ are regular on $S\cap \dom f$, and $g$ is positive on an open set containing $S\cap \dom f$;
\item\label{a:fDiff}
$f^\mathfrak{l}$ is strictly differentiable on an open set containing $S\cap \dom f$ and $g$ is positive on an open set containing $S\cap \dom f$.
\end{enumerate}
\end{assumption}

All of our motivating examples in the introduction satisfy this assumption. Indeed, we note that convex sets and the unit sphere $C=\{x\in \mathbb{R}^N: \|x\|=1\}$ are regular, a continuous convex function on $\mathbb{R}^N$ is regular at any $x \in \mathbb{R}^N$, and $\iota_C$ is regular at any $x \in C$. It follows that  examples~\ref{r:ex_ssr} and \ref{r:ex_Sharpe} both satisfy Assumption~\ref{a:add}\ref{a:fLip}\&\ref{a:fLip&reg}, while example~\ref{r:rayleigh} satisfies Assumption~\ref{a:add}\ref{a:fLip&reg}\&\ref{a:fDiff}.

\begin{corollary}
\label{c:cvg-stationary}
Under the hypotheses of Theorem~\ref{t:cvg}, suppose further that $\liminf_{n\to +\infty} \tau_n =\overline{\tau} >0$, Assumption~\ref{a:add} holds, and $g$ is strictly differentiable on an open set containing $S\cap \dom f$. Then every cluster point $\overline{x}$ of $(x_n)_{n\in \mathbb{N}}$ is a stationary point of \eqref{e:prob}.
\end{corollary}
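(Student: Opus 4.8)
The plan is to combine the subsequential convergence already obtained in Theorem~\ref{t:cvg} with the stationarity equivalence supplied by Lemma~\ref{l:stationary}. First I would invoke Theorem~\ref{t:cvg}\ref{t:cvg_crit}: because $\liminf_{n\to+\infty}\tau_n=\overline{\tau}>0$, every cluster point $\overline{x}$ of $(x_n)_{n\in\mathbb{N}}$ lies in $S\cap\dom f$ and is a \emph{lifted} stationary point of~\eqref{e:prob}. Thus the only remaining task is to upgrade ``lifted stationary'' to ``stationary'', which is precisely the content of Lemma~\ref{l:stationary}\ref{l:stationary_equi}; the work of the proof is therefore to verify that lemma's hypotheses at the point $\overline{x}$.

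The key preparatory step is to cast $f$ in the form required by Lemma~\ref{l:stationary}. By Assumptions~\ref{a:f} and~\ref{a:add}, $f=f^\mathfrak{s}+f^\mathfrak{n}=(f^\mathfrak{s}+f^\mathfrak{l})+\iota_C$, so I would set $f_1:=f^\mathfrak{s}+f^\mathfrak{l}$. Since $\overline{x}\in\dom f$ forces $\overline{x}\in C$, we have $\overline{x}\in C\cap S$, and $g(\overline{x})>0$ because $\overline{x}\in S$ and $g$ is positive on $S$ by Assumption~\ref{a:g}. The essential observation is that $f^\mathfrak{s}$ is continuously differentiable (hence strictly differentiable) with a Lipschitz gradient, so it is simultaneously locally Lipschitz, regular, and strictly differentiable everywhere; adding it to $f^\mathfrak{l}$ therefore preserves whichever of these properties $f^\mathfrak{l}$ enjoys.

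Carrying out the case analysis, I would show that each case of Assumption~\ref{a:add} yields the correspondingly labelled hypothesis of Lemma~\ref{l:stationary}. If Assumption~\ref{a:add}\ref{a:fLip} holds, then $f_1$ is locally Lipschitz and $C=S=\mathcal{H}$, so $\overline{x}\in\inte(C\cap S)$ and condition~\ref{fLip} is met. If Assumption~\ref{a:add}\ref{a:fLip&reg} holds, then $f_1$ is locally Lipschitz around $\overline{x}$ and, by Lemma~\ref{l:calculus}\ref{l:calculus_sum} applied to the sum of the regular function $f^\mathfrak{l}$ and the strictly differentiable $f^\mathfrak{s}$, also regular at $\overline{x}$; together with the regularity of $C\cap S$ and the positivity of $g$ this is condition~\ref{fLip&reg}. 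If Assumption~\ref{a:add}\ref{a:fDiff} holds, then $f_1$ is strictly differentiable at $\overline{x}$ as a sum of two strictly differentiable functions, which is condition~\ref{fDiff}.

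Finally, since $g$ is strictly differentiable on an open set containing $S\cap\dom f$ and $\overline{x}\in S\cap\dom f$, Lemma~\ref{l:stationary}\ref{l:stationary_equi} applies and asserts that $\overline{x}$ is a stationary point of~\eqref{e:prob} if and only if it is a lifted stationary point; combining this with the first step gives the claim. The only mildly delicate point, and the main obstacle such as it is, is the bookkeeping in this case analysis, namely verifying that $f_1=f^\mathfrak{s}+f^\mathfrak{l}$ inherits the regularity or strict differentiability demanded by each case; this rests entirely on the sum rule of Lemma~\ref{l:calculus}\ref{l:calculus_sum} and on $f^\mathfrak{s}$ being strictly differentiable, with everything else a direct citation of the two quoted results.
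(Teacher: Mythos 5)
Your proposal is correct and follows exactly the paper's route: the paper's proof is a one-line citation of Theorem~\ref{t:cvg}\ref{t:cvg_crit} together with Lemma~\ref{l:stationary}\ref{l:stationary_equi}, and your case-by-case verification that $f_1=f^\mathfrak{s}+f^\mathfrak{l}$ satisfies the hypotheses of that lemma under each part of Assumption~\ref{a:add} simply fills in the details the paper leaves implicit.
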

\begin{proof}
This follows from Theorem~\ref{t:cvg}\ref{t:cvg_crit} and Lemma~\ref{l:stationary}\ref{l:stationary_equi}.
\end{proof}

\section{A unified analysis framework and global convergence of e-PSG}
\label{s:KL}

In this section, we will prove that the global convergence of the whole sequence of $(x_n)_{n\in \mathbb{N}}$ generated by Algorithm~\ref{algo:main}, under the assumption that a suitable merit function satisfies the KL property.
To do this, we first establish a general framework for analyzing descent methods which is amenable for optimization method with multi-steps and inexact subproblems. As we will see later on, the proximal subgradient method with extrapolation which we proposed fits to this framework, and so, our desired global convergence result follows consequently.

Firstly, we fix some notation which will be used later on. Let $\mathcal{H}, \mathcal{K}$ be two finite-dimensional real Hilbert spaces. Let $h\colon \mathcal{K}\to \left(-\infty, +\infty\right]$ be a proper lower semicontinuous function, let $(x_n)_{n\in \mathbb{N}}$ and $(z_n)_{n\in \mathbb{N}}$ be respectively sequences in $\mathcal{H}$ and $\mathcal{K}$, $(\alpha_n)_{n\in \mathbb{N}}$ and $(\beta_n)_{n\in \mathbb{N}}$ sequences in $\RPP$,  $(\Delta_n)_{n\in \mathbb{N}}$ and  $(\varepsilon_n)_{n\in \mathbb{N}}$ sequences in $\RP$, and let $\underline{\imath}\leq \overline{\imath}$ be two (not necessarily positive) integers and $\lambda_i\in \RP$, $i\in I :=\{\underline{\imath}, \underline{\imath}+1, \dots, \overline{\imath}\}$, with $\sum_{i\in I} \lambda_i =1$. We set $\Delta_k =0$ for $k <0$ and consider the following conditions:
\begin{enumerate}
\renewcommand\theenumi{\rm (H\arabic{enumi})}
\renewcommand{\labelenumi}{\rm H\arabic{enumi}}
\item\label{a:decrease}
(\emph{Sufficient decrease condition}). For each $n\in \mathbb{N}$,
\begin{equation*}
h(z_{n+1}) +\alpha_n\Delta_n^2\leq h(z_n);
\end{equation*}
\item\label{a:error}
(\emph{Relative error condition}). For each $n\in \mathbb{N}$,
\begin{equation*}
\beta_{n}\dist(0,\partial_L h(z_{n}))\leq \sum_{i\in I} \lambda_i\Delta_{n-i} +\varepsilon_{n};
\end{equation*}
\item\label{a:continuity}
(\emph{Continuity condition}). There exist a subsequence $(z_{k_n})_{n\in \mathbb{N}}$ and $\widetilde{z}$ such that
\begin{equation*}
z_{k_n}\to \widetilde{z} \quad\text{and}\quad h(z_{k_n})\to h(\widetilde{z}) \quad\text{as~} n\to +\infty;
\end{equation*}
\item\label{a:parameter}
(\emph{Parameter condition}). It holds that
\begin{equation*}
\underline{\alpha} :=\inf_{n\in \mathbb{N}} \alpha_n >0,\quad
\underline{\gamma} :=\inf_{n\in \mathbb{N}} \alpha_n\beta_n >0,\quad\text{and}\quad
\sum_{n=1}^{+\infty} \varepsilon_n <+\infty;
\end{equation*}
\item\label{a:distance}
(\emph{Distance condition}). There exist $j\in \mathbb{Z}$ and $c\in \mathbb{R}$ such that, for all $n\in \mathbb{N}$,
\begin{equation*}
\|x_{n+1}-x_n\|\leq c\Delta_{n+j}.
\end{equation*}
\end{enumerate}

Next, we present a lemma which serves as a preparation for our abstract convergence result later on.
\begin{lemma}
\label{l:noKL}
Suppose that \ref{a:decrease} and \ref{a:continuity} hold. Let $\Omega$ be the set of cluster points of $(z_n)_{n\in \mathbb{N}}$ and set $\Omega_0 :=\{\overline{z}\in \Omega: h(z_n)\to h(\overline{z}) \text{~as~} n\to +\infty\}$. Then the following hold:
\begin{enumerate}
\item\label{l:noKL_f}
$\Omega_0 =\{\overline{z}\in \mathcal{K}: \exists z_{k_n}\to \overline{z} \text{~with~} h(z_{k_n})\to h(\overline{z}) \text{~as~} n\to +\infty\}\neq \varnothing$ and, for all $\overline{z}\in \Omega_0$,
\begin{equation*}
h(z_n)\downarrow h(\overline{z}) \text{~~as~} n\to +\infty.
\end{equation*}
\item\label{l:noKL_Delta}
If $\underline{\alpha} :=\inf_{n\in \mathbb{N}} \alpha_n >0$, then, for all $\overline{z}\in \Omega_0$,
\begin{equation*}
\sum_{n=0}^{+\infty} \Delta_n^2\leq  \frac{h(z_0)-h(\overline{z})}{\underline{\alpha}} <+\infty
\end{equation*}
and, consequently, $\Delta_n\to 0$ as $n\to +\infty$.
\item\label{l:noKL_crit}
If \ref{a:error} holds and $\underline{\delta} :=\inf_{n\in \mathbb{N}, i\in I} \alpha_{n-i}\beta_n^2 >0$, then, for all $n\geq \max\{0, \overline{\imath}\}$,
\begin{equation*}
\dist(0,\partial_L h(z_n))\leq \sqrt{\frac{h(z_{n-\overline{\imath}})-h(z_{n+1-\underline{\imath}})}{\underline{\delta}}} +\frac{\varepsilon_n}{\beta_n}.
\end{equation*}
If additionally $\lim_{n\to +\infty} \varepsilon_n/\beta_n =0$, then, for all $\overline{z}\in \Omega_0$,
\begin{equation*}
0\in \partial_L h(\overline{z}).
\end{equation*}
\end{enumerate}
\end{lemma}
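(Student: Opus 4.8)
The plan is to prove the three parts in order, since each feeds into the next. For part~\ref{l:noKL_f}, the first task is to show $\Omega_0$ admits the claimed characterization and is nonempty. Nonemptiness is immediate from the continuity condition~\ref{a:continuity}: the guaranteed subsequence $z_{k_n}\to\widetilde z$ with $h(z_{k_n})\to h(\widetilde z)$ exhibits a point $\widetilde z\in\Omega_0$. The crux is the monotone convergence $h(z_n)\downarrow h(\overline z)$. Here I would first use~\ref{a:decrease} to note that $(h(z_n))_{n\in\mathbb N}$ is nonincreasing (dropping the nonnegative term $\alpha_n\Delta_n^2$), hence it converges to some limit $\ell\in[-\infty,+\infty)$, possibly after checking it is bounded below along the convergent subsequence. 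Since $h(z_{k_n})\to h(\widetilde z)$ along the subsequence and the whole sequence $(h(z_n))$ converges, the full limit must equal $h(\widetilde z)$, so $\ell=h(\widetilde z)$ is finite. The characterization of $\Omega_0$ then follows because any $\overline z\in\Omega_0$ has, by definition, $h(z_n)\to h(\overline z)$, forcing $h(\overline z)=\ell$; conversely any subsequential limit $\overline z$ along which $h$ converges to $h(\overline z)$ must have $h(\overline z)=\ell$ as well, placing it in $\Omega_0$. The monotone-decrease-to-$h(\overline z)$ statement is then just the combination of monotonicity of $(h(z_n))$ with the identified limit.

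For part~\ref{l:noKL_Delta}, assuming $\underline\alpha>0$, I would telescope~\ref{a:decrease}. Summing $\alpha_n\Delta_n^2\le h(z_n)-h(z_{n+1})$ from $n=0$ to $N$ gives $\underline\alpha\sum_{n=0}^N\Delta_n^2\le\sum_{n=0}^N\alpha_n\Delta_n^2\le h(z_0)-h(z_{N+1})$. Letting $N\to+\infty$ and using part~\ref{l:noKL_f} (namely $h(z_{N+1})\to h(\overline z)$ for any fixed $\overline z\in\Omega_0$) yields the bound $\sum_{n=0}^{+\infty}\Delta_n^2\le(h(z_0)-h(\overline z))/\underline\alpha<+\infty$, whence $\Delta_n\to0$. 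This part is essentially routine once part~\ref{l:noKL_f} is in hand.

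Part~\ref{l:noKL_crit} is the most delicate. The first display follows from the relative error condition~\ref{a:error} combined with a summation/Cauchy--Schwarz argument over the finitely many indices $i\in I$. Starting from $\beta_n\dist(0,\partial_L h(z_n))\le\sum_{i\in I}\lambda_i\Delta_{n-i}+\varepsilon_n$, I would divide by $\beta_n$ and bound $\sum_{i\in I}\lambda_i\Delta_{n-i}$ using the convexity weights $\lambda_i$ (with $\sum_{i\in I}\lambda_i=1$) together with the sufficient-decrease inequalities to re-express each $\alpha_{n-i}\Delta_{n-i}^2\le h(z_{n-i})-h(z_{n+1-i})$. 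The factor $\underline\delta=\inf_{n,i}\alpha_{n-i}\beta_n^2$ is designed so that $\lambda_i\Delta_{n-i}/\beta_n$ can be controlled by $\sqrt{(h(z_{n-i})-h(z_{n+1-i}))/\underline\delta}$; the telescoping of these differences across $i$ from $\underline\imath$ to $\overline\imath$ collapses to $h(z_{n-\overline\imath})-h(z_{n+1-\underline\imath})$ under the Jensen/concavity-of-$\sqrt\cdot$ step. The main obstacle is getting this bookkeeping exactly right: one must apply concavity of $t\mapsto\sqrt t$ (or Cauchy--Schwarz with the weights $\lambda_i$) so that $\sum_{i\in I}\lambda_i\sqrt{a_i}\le\sqrt{\sum_{i\in I}\lambda_i a_i}$ with $a_i=(h(z_{n-i})-h(z_{n+1-i}))/\underline\delta$, and then verify that $\sum_{i\in I}\lambda_i a_i$ telescopes (or is dominated) by the single difference $h(z_{n-\overline\imath})-h(z_{n+1-\underline\imath})$ using monotonicity of $(h(z_n))$ from part~\ref{l:noKL_f}. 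For the final conclusion $0\in\partial_L h(\overline z)$, I would fix $\overline z\in\Omega_0$ and pass to the subsequence $z_{k_n}\to\overline z$ with $h(z_{k_n})\to h(\overline z)$. Along this subsequence the right-hand side of the first display tends to $0$: the square-root term vanishes because consecutive values $h(z_{n-\overline\imath})-h(z_{n+1-\underline\imath})\to h(\overline z)-h(\overline z)=0$ by part~\ref{l:noKL_f}, and $\varepsilon_n/\beta_n\to0$ by hypothesis. Hence $\dist(0,\partial_L h(z_{k_n}))\to0$, so there exist $w_{k_n}\in\partial_L h(z_{k_n})$ with $w_{k_n}\to0$; since $z_{k_n}\to\overline z$ and $h(z_{k_n})\to h(\overline z)$, the robustness property~\eqref{e:robustness} of the limiting subdifferential yields $0\in\partial_L h(\overline z)$.
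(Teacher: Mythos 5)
Your proposal is correct and follows essentially the same route as the paper: monotonicity of $(h(z_n))$ from \ref{a:decrease} plus the subsequence from \ref{a:continuity} for part~\ref{l:noKL_f}, telescoping for part~\ref{l:noKL_Delta}, and combining \ref{a:error} with \ref{a:decrease} and the definition of $\underline{\delta}$ for part~\ref{l:noKL_crit}, finishing with robustness of $\partial_L h$. The only cosmetic difference is that you bound $\sum_{i\in I}\lambda_i\Delta_{n-i}$ via Jensen applied to $\sqrt{\cdot}$ while the paper uses Cauchy--Schwarz on the $\Delta_{n-i}$ directly; both yield the identical telescoped bound $\sqrt{(h(z_{n-\overline{\imath}})-h(z_{n+1-\underline{\imath}}))/\underline{\delta}}$.
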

\begin{proof}
\ref{l:noKL_f}: We first have from \ref{a:decrease} that $(h(z_n))_{n\in \mathbb{N}}$ is nondecreasing. Therefore, $(h(z_n))_{n\in \mathbb{N}}$ is convergent if and only if it has a converging subsequence. It follows that
\begin{equation*}
\Omega_0 =\{\overline{z}\in \mathcal{K}: \exists z_{k_n}\to \overline{z} \text{~with~} h(z_{k_n})\to h(\overline{z}) \text{~as~} n\to +\infty\}
\end{equation*}
and by \ref{a:continuity}, $\Omega_0\neq \varnothing$. The remaining statement follows from the definition of $\Omega_0$ and the monotonicity of $(h(z_n))_{n\in \mathbb{N}}$.

\ref{l:noKL_Delta}: Let $\overline{z}\in \Omega_0$. By \ref{a:decrease} and \ref{l:noKL_f},
\begin{equation*}
\sum_{n=0}^{+\infty} \alpha_n\Delta_n^2\leq \sum_{n=0}^{+\infty} (h(z_n)-h(z_{n+1})) =h(z_0)-h(\overline{z}).
\end{equation*}
Since $\underline{\alpha} =\inf_{n\in \mathbb{N}} \alpha_n >0$, it follows that
\begin{equation*}
\sum_{n=0}^{+\infty} \Delta_n^2\leq  \frac{h(z_0)-h(\overline{z})}{\underline{\alpha}} <+\infty,
\end{equation*}
and hence, $\Delta_n\to 0$ as $n\to +\infty$.

\ref{l:noKL_crit}: Assume that \ref{a:error} holds and $\underline{\delta} :=\inf_{n\in \mathbb{N}, i\in I} \alpha_{n-i}\beta_n^2 >0$. Let $n\geq \max\{0, \overline{\imath}\}$. Applying Cauchy--Schwarz inequality and using the fact that $\sum_{i\in I} \lambda_i^2\leq \sum_{i\in I} \lambda_i =1$, we have
\begin{equation*}
\left(\sum_{i\in I} \lambda_i\Delta_{n-i}\right)^2 \leq \left(\sum_{i\in I} \lambda_i^2\right) \left(\sum_{i\in I} \Delta_{n-i}^2\right) \leq \sum_{i\in I} \Delta_{n-i}^2.
\end{equation*}
Combining with \ref{a:error} and then with \ref{a:decrease} yields
\begin{equation*}
\beta_n\dist(0,\partial_L h(z_n))\leq \sqrt{\sum_{i\in I} \Delta_{n-i}^2} +\varepsilon_n\leq \sqrt{\sum_{i\in I} \frac{h(z_{n-i})-h(z_{n+1-i})}{\alpha_{n-i}}} +\varepsilon_n.
\end{equation*}
Since $\underline{\delta} =\inf_{n\in \mathbb{N}, i\in I} \alpha_{n-i}\beta_n^2 >0$, we derive that
\begin{align*}
\dist(0,\partial_L h(z_n))&\leq \sqrt{\sum_{i\in I} \frac{h(z_{n-i})-h(z_{n+1-i})}{\alpha_{n-i}\beta_n^2}} +\frac{\varepsilon_n}{\beta_n}\\
&\leq \sqrt{\sum_{i\in I} \frac{h(z_{n-i})-h(z_{n+1-i})}{\underline{\delta}}} +\frac{\varepsilon_n}{\beta_n}\\
&=\sqrt{\frac{h(z_{n-\overline{\imath}})-h(z_{n+1-\underline{\imath}})}{\underline{\delta}}} +\frac{\varepsilon_n}{\beta_n}.
\end{align*}
Finally, if $\lim_{n\to +\infty} \varepsilon_n/\beta_n =0$, then, noting from \ref{l:noKL_f} that $(h(z_n))_{n\in \mathbb{N}}$ is convergent, we get $\lim_{n\to +\infty} \dist(0,\partial_L h(z_n)) =0$. This shows that
$0\in \partial_L h(\overline{z})$ for all $\overline{z}\in \Omega_0$, which completes the proof.
\end{proof}

\begin{theorem}[Abstract convergence]
\label{t:abstract}
Suppose that \ref{a:decrease}, \ref{a:error}, \ref{a:continuity}, and \ref{a:parameter} hold and that the sequence $(z_n)_{n\in \mathbb{N}}$ is bounded. Let $\Omega$ be the set of cluster points of $(z_n)_{n\in \mathbb{N}}$ and suppose that $h$ is constant on $\Omega$ and satisfies the KL property at each point of $\Omega$. Set $\Omega_0 :=\{\overline{z}\in \Omega: h(z_n)\to h(\overline{z}) \text{~as~} n\to +\infty\}$ and $\overline{h} :=h(z)$ for $z\in \Omega_0$. Then the following hold:
\begin{enumerate}
\item\label{t:abstract_Delta}
The sequence $(\Delta_n)_{n\in \mathbb{N}}$ satisfies
\begin{equation*}
\sum_{n=0}^{+\infty} \Delta_n <+\infty.
\end{equation*}
\item\label{t:abstract_xn}
If \ref{a:distance} holds, then
$\sum_{n=0}^{+\infty} \|x_{n+1}-x_n\| <+\infty$,
and the sequence $(x_n)_{n\in \mathbb{N}}$ is convergent.
\item\label{t:abstract_crit}
If $\inf_{n\in \mathbb{N}} \beta_n >0$, then, for all $\overline{z}\in \Omega_0$,
\begin{equation*}
0\in \partial_L h(\overline{z}).
\end{equation*}
\item\label{t:abstract_linear}
Suppose further that $h$ satisfies the KL property at every point of $\Omega$ with an exponent of $\alpha\leq 1/2$, that $\underline{\imath}\leq 1$, and that
\begin{equation}\label{e:epsilon_n}
\delta :=\inf_{n\in \mathbb{N}, i\in I} \alpha_{n-i}\beta_n^2 >0
\quad\text{and}\quad
\frac{\varepsilon_n}{\beta_n} =O\left(\sqrt{h(z_{n-\overline{\imath}})-h(z_{n+1-\underline{\imath}})}\right) \text{~~as~} n\to +\infty.
\end{equation}
Then there exist $\gamma_1\in \RPP$ and $\rho\in \left(0,1\right)$ such that, for all $n\in \mathbb{N}$,
\begin{equation*}
h(z_n)-\overline{h}\leq \gamma_1\rho^n.
\end{equation*}
Moreover, if additionally \ref{a:distance} holds and $\sum_{k=n}^{+\infty} \varepsilon_k =O\left(\sqrt{h(z_{n-\overline{\imath}})-\overline{h}}\right)$ as $n\to +\infty$, then there exist $\overline{x}\in \mathcal{H}$ and $\gamma_2\in \RPP$ such that, for all $n\in \mathbb{N}$,
\begin{equation*}
\|x_n-\overline{x}\|\leq \gamma_2\rho^\frac{n}{2}.
\end{equation*}
\end{enumerate}
\end{theorem}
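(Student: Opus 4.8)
The plan is to run the standard Kurdyka--{\L}ojasiewicz descent machinery, adapted to the multi-step relative error \ref{a:error}. First I would dispose of the trivial case: Lemma~\ref{l:noKL}\ref{l:noKL_f} gives $h(z_n)\downarrow\overline{h}$, so if $h(z_n)=\overline{h}$ for some $n$ then \ref{a:decrease} forces $\Delta_k=0$ for all $k\geq n$ and every assertion is immediate; hence I may assume $h(z_n)>\overline{h}$ for all $n$. I would then record the consequences of Lemma~\ref{l:noKL}: $\Omega_0\neq\varnothing$, $\Delta_n\to 0$, and $\sum_n\Delta_n^2<+\infty$ (using \ref{a:parameter}). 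Since $(z_n)$ is bounded, $\Omega$ is compact and $h$ is constant on it, so Lemma~\ref{l:uniformKL} furnishes a single $\varphi\in\Phi_\eta$ and an index $n_0$ with $\varphi'(h(z_n)-\overline{h})\dist(0,\partial_L h(z_n))\geq 1$ for all large $n$.

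For \ref{t:abstract_Delta}, set $\varphi_n:=\varphi(h(z_n)-\overline{h})$. Concavity of $\varphi$ and \ref{a:decrease} give $\varphi_n-\varphi_{n+1}\geq \varphi'(h(z_n)-\overline{h})\,\alpha_n\Delta_n^2$, while the uniformized KL inequality and \ref{a:error} yield $\varphi'(h(z_n)-\overline{h})\geq 1/\dist(0,\partial_L h(z_n))\geq \beta_n/\big(\sum_{i\in I}\lambda_i\Delta_{n-i}+\varepsilon_n\big)$. Using $\underline{\gamma}=\inf_n\alpha_n\beta_n>0$ this gives $\Delta_n^2\leq \underline{\gamma}^{-1}(\varphi_n-\varphi_{n+1})\big(\sum_{i\in I}\lambda_i\Delta_{n-i}+\varepsilon_n\big)$, and the inequality $\sqrt{ab}\leq\tfrac12(a+b)$ turns this into the per-step bound $\Delta_n\leq \tfrac12\underline{\gamma}^{-1}(\varphi_n-\varphi_{n+1})+\tfrac12\big(\sum_{i\in I}\lambda_i\Delta_{n-i}+\varepsilon_n\big)$. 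Summing over $n$ and reindexing each shifted sum via $\sum_n\sum_{i\in I}\lambda_i\Delta_{n-i}=\sum_{i\in I}\lambda_i\sum_n\Delta_{n-i}$ (where $\sum_{i\in I}\lambda_i=1$ and only finitely many boundary terms $\Delta_m$ near the endpoints, all $\to0$, are left over) lets me absorb $\tfrac12\sum_n\Delta_n$ into the left-hand side; telescoping $\varphi_n-\varphi_{n+1}$ together with $\sum_n\varepsilon_n<+\infty$ from \ref{a:parameter} then gives $\sum_n\Delta_n<+\infty$. This convex-combination reindexing is the main technical nuisance, and is precisely where the multi-step structure departs from the classical single-step argument.

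Parts \ref{t:abstract_xn} and \ref{t:abstract_crit} are then short. For \ref{t:abstract_xn}, \ref{a:distance} gives $\sum_n\|x_{n+1}-x_n\|\leq c\sum_n\Delta_{n+j}\leq c\sum_m\Delta_m<+\infty$ (using $\Delta_k=0$ for $k<0$), so $(x_n)$ is Cauchy and converges. For \ref{t:abstract_crit}, note that $\inf_n\beta_n>0$ together with \ref{a:parameter} forces $\underline{\delta}=\inf_{n,i}\alpha_{n-i}\beta_n^2>0$ and $\varepsilon_n/\beta_n\to 0$, so Lemma~\ref{l:noKL}\ref{l:noKL_crit} applies verbatim and yields $0\in\partial_L h(\overline{z})$ for every $\overline{z}\in\Omega_0$.

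The substantive remaining work is \ref{t:abstract_linear}. With exponent $\alpha\leq 1/2$ I take $\varphi(s)=\gamma s^{1-\alpha}$; writing $r_n:=h(z_n)-\overline{h}$, the KL inequality reads $\dist(0,\partial_L h(z_n))\geq \gamma^{-1}(1-\alpha)^{-1} r_n^{\alpha}$, while Lemma~\ref{l:noKL}\ref{l:noKL_crit} with the first condition in \eqref{e:epsilon_n} gives $\dist(0,\partial_L h(z_n))\leq C\sqrt{r_{n-\overline{\imath}}-r_{n+1-\underline{\imath}}}$. Combining, and using $2\alpha\leq 1$ with $r_n\leq 1$ for large $n$ (so that $r_n\leq r_n^{2\alpha}$), produces the delayed recursion $r_n\leq C'(r_{n-\overline{\imath}}-r_{n+1-\underline{\imath}})$. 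Setting $m=n-\overline{\imath}$ and $p:=\overline{\imath}-\underline{\imath}+1$ rewrites it as $r_{m+\overline{\imath}}\leq C'(r_m-r_{m+p})$; the hypothesis $\underline{\imath}\leq 1$ is exactly what makes $\overline{\imath}\leq p$, so monotonicity of $(r_n)$ gives $r_{m+p}\leq r_{m+\overline{\imath}}\leq C'(r_m-r_{m+p})$, whence $r_{m+p}\leq \tfrac{C'}{1+C'}r_m$. This periodic contraction upgrades to $r_n\leq\gamma_1\rho^n$ for some $\rho\in(0,1)$. For the iterates, I would sum the per-step estimate from \ref{t:abstract_Delta} over a tail and absorb, obtaining $\sum_{k\geq n}\Delta_k\leq C_0\big(\varphi(r_n)+\sum_{k\geq n}\varepsilon_k\big)$ up to finitely many $O(\rho^{n/2})$ boundary terms; here $\varphi(r_n)=\gamma r_n^{1-\alpha}=O(\rho^{(1-\alpha)n})=O(\rho^{n/2})$ precisely because $\alpha\leq 1/2$, and the supplementary hypothesis $\sum_{k\geq n}\varepsilon_k=O(\sqrt{r_{n-\overline{\imath}}})=O(\rho^{n/2})$ controls the accumulated inexactness. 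Thus $\sum_{k\geq n}\Delta_k=O(\rho^{n/2})$, and \ref{a:distance} gives $\|x_n-\overline{x}\|\leq c\sum_{k\geq n}\Delta_{k+j}=O(\rho^{n/2})$. I expect the delayed-recursion reindexing in this last part (and the mileage extracted from $\underline{\imath}\leq1$), together with the convex-combination summation in \ref{t:abstract_Delta}, to be the only genuinely delicate points; everything else is bookkeeping on top of Lemmas~\ref{l:noKL} and \ref{l:uniformKL}.
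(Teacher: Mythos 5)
Your proposal is correct and follows essentially the same route as the paper's proof: the same reduction via Lemmas~\ref{l:noKL} and \ref{l:uniformKL}, the same AM--GM per-step bound with convex-combination reindexing and absorption for part \ref{t:abstract_Delta}, and the same delayed recursion $r_{n+1-\underline{\imath}}\leq\frac{C'}{1+C'}r_{n-\overline{\imath}}$ exploiting $\underline{\imath}\leq 1$ for part \ref{t:abstract_linear} (the paper makes your ``boundary terms are $O(\rho^{n/2})$'' claim explicit via Cauchy--Schwarz and \ref{a:decrease}, giving $\bigl(\sum_{i=1}^{\overline{\imath}}\Delta_{n-i}\bigr)^2\leq\frac{\overline{\imath}}{\underline{\alpha}}r_{n-\overline{\imath}}$, but that is exactly the filler you indicate). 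No substantive differences.
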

\begin{proof}
First, $\Omega_0\neq \varnothing$ due to Lemma~\ref{l:noKL}\ref{l:noKL_f}. Let $\overline{z}\in \Omega_0$. Again by Lemma~\ref{l:noKL}\ref{l:noKL_f},
\begin{equation*}
h(z_n)\downarrow \overline{h} =h(\overline{z}) \text{~~as~} n\to +\infty.
\end{equation*}

\ref{t:abstract_Delta}: Noting that, for all $n\in \mathbb{N}$, $h(z_n)\geq h(\overline{z})$, we distinguish the following two cases.

\emph{Case~1:} There exists $n_1\in \mathbb{N}$ such that $h(z_{n_1}) =h(\overline{z})$. Then, since $(h(z_n))_{n\in \mathbb{N}}$ is nondecreasing, $h(z_n) =h(\overline{z})$ for all $n\geq n_1$. It follows from \ref{a:decrease} that $\Delta_n =0$ for all $n\geq n_1$, so $\sum_{n=0}^{+\infty} \Delta_n <+\infty$.

\emph{Case~2:} For all $n\in \mathbb{N}$, $h(z_n) >h(\overline{z})$. We derive from Lemma~\ref{l:uniformKL} that there exist $\eta \in \left(0,+\infty\right]$, $\varphi\in \Phi_\eta$, and $n_0\in \mathbb{N}$ such that, for all $n\geq n_0$,
\begin{equation}\label{e:KLine'}
\varphi'(h(z_n)-h(\overline{z}))\dist(0,\partial_L h(z_n))\geq 1.
\end{equation}
Setting $r_n :=h(z_n)-h(\overline{z})\downarrow 0$, by combining with \ref{a:decrease}, \ref{a:error}, \ref{a:parameter}, and the concavity of $\varphi$, it follows that, for all $n\geq n_0$,
\begin{align*}
\Delta_n^2&\leq \frac{1}{\alpha_n}(h(z_n)-h(z_{n+1}))\varphi'(h(z_n)-h(\overline{z}))\dist(0,\partial_L h(z_n))\\
&\leq \frac{1}{\alpha_n\beta_n}\Big(\varphi(r_n) -\varphi(r_{n+1})\Big) \Big(\sum_{i\in I} \lambda_i\Delta_{n-i} +\varepsilon_n\Big)\\
&\leq \frac{1}{\underline{\gamma}}\Big(\varphi(r_n) -\varphi(r_{n+1})\Big) \Big(\sum_{i\in I} \lambda_i\Delta_{n-i} +\varepsilon_n\Big).
\end{align*}
Using the inequality of arithmetic and geometric means (AM-GM) gives us that, for all $n\geq n_0$,
\begin{equation*}
2\Delta_n\leq \frac{1}{\underline{\gamma}}\Big(\varphi(r_n) -\varphi(r_{n+1})\Big) +\Big(\sum_{i\in I} \lambda_i\Delta_{n-i} +\varepsilon_n\Big).
\end{equation*}
Since this inequality holds for all $n\geq n_0$, we derive that, for all $m\geq n\geq \max\{n_0, \overline{\imath}\}$,
\begin{equation}\label{e:2sum}
2\sum_{k=n}^m \Delta_k\leq \frac{1}{\underline{\gamma}}\Big(\varphi(r_n) -\varphi(r_{m+1})\Big) +\sum_{k=n}^m\sum_{i\in I} \lambda_i\Delta_{k-i} +\sum_{k=n}^m  \varepsilon_k.
\end{equation}
We have that
\begin{equation*}
\sum_{k=n}^m\sum_{i\in I} \lambda_i\Delta_{k-i} =\sum_{i\in I} \lambda_i\sum_{k=n}^m \Delta_{k-i} =\sum_{i\in I} \lambda_i\sum_{k=n-i}^{m-i} \Delta_k \leq \sum_{i\in I} \lambda_i\sum_{k=n-\overline{\imath}}^{m-\underline{\imath}} \Delta_k =\sum_{k=n-\overline{\imath}}^{m-\underline{\imath}} \Delta_k,
\end{equation*}
using the fact that $\Delta_k\geq 0$ for all $k\in \mathbb{Z}$ and that $\sum_{i\in I} \lambda_i =1$. Now, by adopting the convention that a summation is zero when the starting index is larger than the ending index,
\begin{equation*}
\sum_{k=n-\overline{\imath}}^{m-\underline{\imath}} \Delta_k\leq \sum_{k=n}^m \Delta_k +\sum_{k=n-\overline{\imath}}^{n-1} \Delta_k +\sum_{k=m+1}^{m-\underline{\imath}} \Delta_k
=\sum_{k=n}^m \Delta_k +\sum_{i=1}^{\overline{\imath}} \Delta_{n-i} +\sum_{i=\underline{\imath}}^{-1} \Delta_{m-i}.
\end{equation*}
We continue \eqref{e:2sum} as
\begin{equation*}
\sum_{k=n}^m \Delta_k\leq \frac{1}{\underline{\gamma}}\Big(\varphi(r_n) -\varphi(r_{m+1})\Big) +\sum_{i=1}^{\overline{\imath}} \Delta_{n-i} +\sum_{i=\underline{\imath}}^{-1} \Delta_{m-i} +\sum_{k=n}^m \varepsilon_k.
\end{equation*}
Letting $m\to +\infty$ and noting from Lemma~\ref{l:noKL}\ref{l:noKL_Delta} that $\Delta_m\to 0$, we obtain
\begin{equation}\label{e:sumDelta}
\sum_{k=n}^{+\infty} \Delta_k\leq \frac{1}{\underline{\gamma}}\varphi(r_n) +\sum_{i=1}^{\overline{\imath}} \Delta_{n-i} +\sum_{k=n}^{+\infty} \varepsilon_k <+\infty,
\end{equation}
which yields $\sum_{n=0}^{+\infty} \Delta_n <+\infty$.

\ref{t:abstract_xn}: This follows from \ref{t:abstract_Delta} and \ref{a:distance}.

\ref{t:abstract_crit}: As $\inf_{n\in \mathbb{N}} \beta_n >0$, noting that $\inf_{n\in \mathbb{N}} \alpha_n >0$ and $\lim_{n\to +\infty} \varepsilon_n =0$, we have $\inf_{n\in \mathbb{N}, i\in I} \alpha_{n-i}\beta_n^2 >0$ and $\lim_{n\to +\infty} \varepsilon_n/\beta_n =0$. Therefore, the conclusion of this part follows from Lemma~\ref{l:noKL}\ref{l:noKL_crit}.

\ref{t:abstract_linear}: Using Lemma~\ref{l:noKL}\ref{l:noKL_crit} and \eqref{e:epsilon_n}, and by increasing $n_0$ if necessary, we find $c_1\in \RPP$ such that, for all $n\geq n_0$,
\begin{equation*}
\dist(0,\partial_L h(z_n))\leq \sqrt{\frac{h(z_{n-\overline{\imath}})-h(z_{n+1-\underline{\imath}})}{\delta}} +\frac{\varepsilon_n}{\beta_n}\leq c_1\sqrt{h(z_{n-\overline{\imath}})-h(z_{n+1-\underline{\imath}})}.
\end{equation*}
Combining with \eqref{e:KLine'} yields
\begin{equation*}
1\leq c_1\varphi'(h(z_n)-h(\overline{z}))\sqrt{h(z_{n-\overline{\imath}})-h(z_{n+1-\underline{\imath}})}.
\end{equation*}
Since $r_n =h(z_n)-h(\overline{z})$, it follows that
\begin{equation*}
1\leq c_1^2 \, [\varphi'(r_n)]^2 \, (r_{n-\overline{\imath}}-r_{n+1-\underline{\imath}}).
\end{equation*}
As $\varphi$ can be chosen as $\varphi(s) =\gamma s^{1-\alpha}$ for some $\gamma\in \RPP$, there exists $c_2\in \RPP$ such that, for all $n\geq n_0$,
\begin{equation*}
c_2r_n^{2\alpha}\leq r_{n-\overline{\imath}}-r_{n+1-\underline{\imath}}.
\end{equation*}
Since $r_n\downarrow 0$, $2\alpha\leq 1$, and $1-\underline{\imath}\geq 0$, by increasing $n_0$ if necessary, it holds that, for all $n\geq n_0$,
$r_n^{2\alpha}\geq r_n\geq r_{n+1-\underline{\imath}}$.
We deduce that, for all $n\geq n_0$,
\begin{equation*}
r_{n+1-\underline{\imath}}\leq \frac{1}{c_2+1}r_{n-\overline{\imath}},
\end{equation*}
and hence, there exist $\gamma_1\in \RPP$ and $\rho\in \left(0,1\right)$ such that, for all $n\in \mathbb{N}$,
$0\leq r_n =h(z_n)-h(\overline{z})\leq \gamma_1\rho^n$.

Now, it follows from Cauchy--Schwarz inequality, \ref{a:decrease}, and \ref{a:parameter} that
\begin{align*}
\left(\sum_{i=1}^{\overline{\imath}} \Delta_{n-i}\right)^2\leq \overline{\imath}\sum_{i=1}^{\overline{\imath}} \Delta_{n-i}^2
&\leq \overline{\imath}\sum_{i=1}^{\overline{\imath}} \frac{h(z_{n-i})-h(z_{n+1-i})}{\underline{\alpha}}\\
&\leq \frac{\overline{\imath}}{\underline{\alpha}}\max\{0, h(z_{n-\overline{\imath}})-h(z_n)\}\\
&\leq \frac{\overline{\imath}}{\underline{\alpha}}r_{n-\overline{\imath}}.
\end{align*}
Combining with \eqref{e:sumDelta} gives, for all $n\geq n_0$,
\begin{equation*}
\sum_{k=n}^{+\infty} \Delta_k\leq \frac{1}{\underline{\gamma}}\varphi(r_n) +\sqrt{\frac{\overline{\imath}}{\underline{\alpha}}r_{n-\overline{\imath}}} +\sum_{k=n}^{+\infty} \varepsilon_k.
\end{equation*}
As $\sum_{k=n}^{+\infty} \varepsilon_k =O\left(\sqrt{h(z_{n-\overline{\imath}})-h(\overline{z})}\right) =O(\sqrt{r_{n-\overline{\imath}}})$ as $n\to +\infty$ and $\varphi(r_n) =\gamma r_n^{1-\alpha}\leq \gamma r_{n-\overline{\imath}}^{1-\alpha}\leq \gamma\sqrt{r_{n-\overline{\imath}}}$ for all $n$ large enough, by increasing $n_0$ if necessary, there exists $c_3\in \RPP$ such that, for all $n\geq n_0$,
\begin{equation*}
\sum_{k=n}^{+\infty} \Delta_k\leq c_3\sqrt{r_{n-\overline{\imath}}}\leq c_3\sqrt{\gamma_1}\rho^{\frac{n-\overline{\imath}}{2}}.
\end{equation*}
Since \ref{a:distance} holds, \ref{t:abstract_xn} implies that $(x_n)_{n\in \mathbb{N}}$ is convergent to some $\overline{x}\in \mathcal{H}$. Then, for all $n\in \mathbb{N}$,
\begin{equation*}
\|x_n-\overline{x}\|\leq \sum_{k=n}^{+\infty} \|x_{k+1}-x_k\|\leq c\sum_{k=n}^{+\infty} \Delta_k
\end{equation*}
and the conclusion follows.
\end{proof}

\begin{remark}[Parameter conditions]
\label{r:parameters}
In view of \ref{a:parameter} and as shown in the proof of Theorem~\ref{t:abstract}\ref{t:cvg_crit}, if $\inf_{n\in \mathbb{N}} \beta_n >0$, then the conditions $\inf_{n\in \mathbb{N}, i\in I} \alpha_{n-i}\beta_n^2 >0$ and $\lim_{n\to +\infty} \varepsilon_n/\beta_n =0$ in Lemma~\ref{l:noKL}\ref{l:noKL_crit} are guaranteed. If additionally $\varepsilon_n =O(h(z_{n-\overline{\imath}})-h(z_{n+1-\underline{\imath}}))$ as $n\to +\infty$, then the parameter conditions
\begin{equation*}
\frac{\varepsilon_n}{\beta_n} =O\left(\sqrt{h(z_{n-\overline{\imath}})-h(z_{n+1-\underline{\imath}})}\right) \quad\text{and}\quad \sum_{k=n}^{+\infty} \varepsilon_k =O\left(\sqrt{h(z_{n-\overline{\imath}})-h(\overline{z})}\right) \text{~~as~} n\to +\infty.
\end{equation*}
in Theorem~\ref{t:abstract}\ref{t:abstract_linear} are also satisfied. Indeed, since $h(z_n)\downarrow h(\overline{z})$, we have $h(z_{n-\overline{\imath}})-h(z_{n+1-\underline{\imath}})\to 0$ as $n\to +\infty$, and so, for all $n$ large enough, $h(z_{n-\overline{\imath}})-h(z_{n+1-\underline{\imath}})\leq \sqrt{h(z_{n-\overline{\imath}})-h(z_{n+1-\underline{\imath}})}$. It follows that $\varepsilon_n =O(\sqrt{h(z_{n-\overline{\imath}})-h(z_{n+1-\underline{\imath}})})$ and, since $\inf_{n\in \mathbb{N}} \beta_n >0$, $\varepsilon_n/\beta_n =O(\sqrt{h(z_{n-\overline{\imath}})-h(z_{n+1-\underline{\imath}})})$ as $n\to +\infty$. Now, we note that
\begin{equation*}
\sum_{k=n}^{+\infty} (h(z_{k-\overline{\imath}})-h(z_{k+1-\underline{\imath}})) =\sum_{i=\underline{\imath}}^{\overline{\imath}} (h(z_{n-i})-h(\overline{z}))\leq (\overline{\imath}-\underline{\imath}+1)(h(z_{n-\overline{\imath}})-h(\overline{z})),
\end{equation*}
which implies that $\sum_{k=n}^{+\infty} \varepsilon_k =O(h(z_{n-\overline{\imath}})-h(\overline{z}))$, and so $\sum_{k=n}^{+\infty} \varepsilon_k =O\left(\sqrt{h(z_{n-\overline{\imath}})-h(\overline{z})}\right)$ as $n\to +\infty$.
\end{remark}

\begin{remark}[Comparison to the existing literature]
The general framework \ref{a:decrease}--\ref{a:distance} extends various convergence conditions for exact and inexact descent methods in the literature. Specifically, in \cite{ABS13,BST14}, the authors proposed conditions that satisfied \ref{a:decrease}--\ref{a:distance} with $\mathcal{K} =\mathcal{H} =\mathbb{R}^N$, $z_n =x_n$, $\Delta_n =\|x_{n+1}-x_n\|^2$, $\alpha_n \equiv a$, $\beta_n \equiv 1/b$, $\varepsilon_n \equiv 0$, $I =\{1\}$, and $\lambda_1 =1$. These conditions were then generalized in \cite{FGP14} to flexible parameters and real Hilbert spaces. In the finite-dimensional setting, the conditions in \cite{FGP14} fulfill  \ref{a:decrease}--\ref{a:distance} with $\mathcal{K} =\mathcal{H}$, $z_n =x_n$, $\Delta_n =\|x_{n+1}-x_n\|^2$, $I =\{1\}$, and $\lambda_1 =1$.

The framework \ref{a:decrease}--\ref{a:distance} also holds in the case of \cite[Proposition~4]{BP16} with $\mathcal{K} =\mathcal{H} =\mathbb{R}^N$, $z_n =x_n$, $\Delta_n =\|x_{n+2}-x_{n+1}\|^2$, $\alpha_n \equiv a$, $\beta_n \equiv 1/b$, $\varepsilon_n \equiv 0$, $I =\{1\}$, and $\lambda_1 =1$. Here, $\Delta_n$ is shifted one step forward comparing to the two aforementioned studies. This difference makes the relative error condition explicit; see \cite[Section~2.4]{Noll13} for a discussion.

In \cite{OCBP14}, the authors provided a framework for convergence analysis of iPiano, a proximal gradient algorithm with extrapolation. In turn, their conditions satisfied \ref{a:decrease}--\ref{a:distance} with $\mathcal{K} =\mathcal{H}^2$, $z_n =(x_n,x_{n-1})$, $\Delta_n =\|x_n-x_{n-1}\|^2$, $\alpha_n \equiv a$, $\beta_n \equiv 1/b$, $\varepsilon_n \equiv 0$, $I =\{0,1\}$, and $\lambda_0 =\lambda_1 =1/2$. Recently, these conditions have been extended in \cite{Och19} with $\mathcal{H} =\mathbb{R}^N$, $\mathcal{K} =\mathbb{R}^{N+P}$ and $z_n =(x_n,u_n)$. It is worth noting that the finite index set $I$ of integers in \cite{Och19} can always be written as $I =\{\underline{\imath}, \underline{\imath}+1, \dots, \overline{\imath}\}$ for $\underline{\imath}\leq \overline{\imath}$. To get the global convergence of $(x_n)_{n\in \mathbb{N}}$, \cite[Theorem~10]{Och19} not only needs \ref{a:distance} as our Theorem~\ref{t:abstract} but also requires that $h$ is bounded from below and that, for any converging subsequence $(z_{k_n})_{n\in \mathbb{N}}$ of $(z_n)_{n\in \mathbb{N}}$,
\begin{equation*}
z_{k_n}\to \widetilde{z} \quad\text{and}\quad h(z_{k_n})\to h(\widetilde{z}) \quad\text{as~} n\to +\infty,
\end{equation*}
which implies that $h$ is constant on $\Omega$. We also note that linear convergence of $(x_n)_{n\in \mathbb{N}}$ has been not investigated in the framework of \cite{Och19,OCBP14}.
\end{remark}

Next, we show that the full sequence generated by Algorithm~\ref{algo:main} is globally convergent by further assuming that  a suitable merit function is a KL function. We note that, as we will see later in Remark \ref{r:KLfunctions}, this assumption is automatically fulfilled if $f$ and $g$ are both semi-algebraic functions and $S$ is a semi-algebraic set, which, in particular, holds for all the motivating examples mentioned before.

\begin{theorem}[Global convergence]
\label{t:cvgKL}
Let $\lim\inf_{n \to \infty} \tau_n =\overline{\tau} >0$ and let $(x_n)_{n\in \mathbb{N}}$ be the sequence generated by Algorithm~\ref{algo:main}. Suppose that Assumptions~\ref{a:f}, \ref{a:g}, and \ref{a:add} hold, that $g$ is continuously differentiable on an open set containing $S\cap \dom f$, that, for $c$ given in \eqref{e:c&alpha},
\begin{equation*}
h(x,y) :=\frac{f(x)}{g(x)} +\iota_S(x) +c\|x-y\|^2
\end{equation*}
satisfies the KL property at $(\overline{x},\overline{x})$ for all $\overline{x}\in S\cap \dom f$, and that the set $\{x\in S: \frac{f(x)}{g(x)}\leq \frac{f(x_0)}{g(x_0)}\}$ is bounded. Suppose that there exist $\varepsilon, \ell_g\in \RPP$ satisfying
\begin{equation*}
\forall x,y\in S\cap \dom f,\quad 
\|x-y\|\leq \varepsilon \implies \|\nabla g(x)-\nabla g(y))\|\leq \ell_g\|x-y\|.
\end{equation*}
Then
$\sum_{n=0}^{+\infty} \|x_{n+1}-x_n\| <+\infty$,
and the sequence $(x_n)_{n\in \mathbb{N}}$ converges to a stationary point of \eqref{e:prob}. Moreover, if $h$ satisfies the KL property with an exponent of $\alpha\leq 1/2$ at $(\overline{x},\overline{x})$ for all $\overline{x}\in S\cap \dom f$, then the convergence rate of $(x_n)_{n\in \mathbb{N}}$ and $(h(x_{n+1},x_n))_{n\in \mathbb{N}}$ is linear in the sense that there exist $\gamma_1, \gamma_2\in \RPP$ and $\rho\in \left(0,1\right)$ such that, for all $n\in \mathbb{N}$,
\begin{equation*}
|h(x_{n+1},x_n) -h(x_\infty,x_\infty)|\leq \gamma_1\rho^n \quad\text{and}\quad \|x_n-x_\infty\|\leq \gamma_2\rho^\frac{n}{2}.
\end{equation*}
\end{theorem}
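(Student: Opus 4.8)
The plan is to derive everything from the abstract convergence result Theorem~\ref{t:abstract}, applied to the lifted sequence $z_n := (x_n, x_{n-1})$ in $\mathcal{K} := \mathcal{H}\times\mathcal{H}$, with merit function $h$, step quantities $\Delta_n := \|x_{n+1}-x_n\|$, and constant parameters $\alpha_n\equiv\alpha$ (the constant from Theorem~\ref{t:cvg}\ref{t:cvg_decrease}), $\beta_n\equiv\beta>0$, and $\varepsilon_n\equiv 0$. Several of the conditions come essentially for free. Condition \ref{a:decrease} is precisely Theorem~\ref{t:cvg}\ref{t:cvg_decrease}, since $h(z_{n+1}) = \frac{f(x_{n+1})}{g(x_{n+1})}+c\|x_{n+1}-x_n\|^2$. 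Condition \ref{a:continuity} follows from Theorem~\ref{t:cvg}\ref{t:cvg_seq}--\ref{t:cvg_crit}: boundedness gives a subsequence $x_{k_n}\to\overline{x}$, asymptotic regularity gives $x_{k_n-1}\to\overline{x}$, hence $z_{k_n}\to(\overline{x},\overline{x})$, and $\frac{f(x_{k_n})}{g(x_{k_n})}\to\frac{f(\overline{x})}{g(\overline{x})}$ together with $\|x_{k_n}-x_{k_n-1}\|\to 0$ yields $h(z_{k_n})\to h(\overline{x},\overline{x})$. Condition \ref{a:distance} is immediate with $j=0$ and unit constant. Condition \ref{a:parameter} holds since $\alpha>0$, $\alpha\beta>0$, and $\sum\varepsilon_n=0$. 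Finally, the cluster set $\Omega$ consists of diagonal points $(\overline{x},\overline{x})$ with $\overline{x}\in S\cap\dom f$, on which $h\equiv\overline{h}:=\lim_n\frac{f(x_n)}{g(x_n)}$ is constant, so the assumed KL property is in force.

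The crux is the relative error condition \ref{a:error}. Since $c\|x-y\|^2$ is smooth, the sum rule (Lemma~\ref{l:calculus}\ref{l:calculus_sum}) gives
\[
\partial_L h(x,y) = \Bigl(\partial_L\bigl(\tfrac{f}{g}+\iota_S\bigr)(x)+2c(x-y)\Bigr)\times\{-2c(x-y)\},
\]
so it suffices to exhibit one element $w_n\in\partial_L(\frac{f}{g}+\iota_S)(x_n)$ of controlled size. Writing the Fermat rule for the Step~\ref{step:main} subproblem that produced $x_n$ (from $u_{n-1},v_{n-1}$) and using $f=f^\mathfrak{s}+f^\mathfrak{n}$ with $f^\mathfrak{s}$ smooth, one obtains the explicit subgradient
\[
\xi_n = \nabla f^\mathfrak{s}(x_n)-\nabla f^\mathfrak{s}(u_{n-1})-\tfrac{1}{\tau_{n-1}}(x_n-v_{n-1})+\theta_{n-1}\nabla g(x_{n-1})-\ell(x_n-u_{n-1})\in\partial_L(f+\iota_S)(x_n).
\]
As $g$ is now continuously differentiable, the quotient formula of Lemma~\ref{l:stationary}\ref{l:stationary_equi} converts $\xi_n$ into $w_n := \frac{g(x_n)\xi_n-f(x_n)\nabla g(x_n)}{g(x_n)^2}\in\partial_L(\frac{f}{g}+\iota_S)(x_n)$.

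The heart of the estimate is a near-cancellation of the two gradient-of-$g$ contributions: the $\theta_{n-1}\nabla g(x_{n-1})$ inside $\xi_n$ and the $-\frac{f(x_n)}{g(x_n)^2}\nabla g(x_n)=-\frac{\theta_n}{g(x_n)}\nabla g(x_n)$ combine into $\frac{1}{g(x_n)}\bigl(\theta_{n-1}\nabla g(x_{n-1})-\theta_n\nabla g(x_n)\bigr)$. Splitting this as $\frac{\theta_{n-1}}{g(x_n)}\bigl(\nabla g(x_{n-1})-\nabla g(x_n)\bigr)+\frac{\theta_{n-1}-\theta_n}{g(x_n)}\nabla g(x_n)$, the assumed local Lipschitz continuity of $\nabla g$ bounds the first piece by $O(\Delta_{n-1})$ for $n$ large (where $\|x_n-x_{n-1}\|\le\varepsilon$ by asymptotic regularity), while the second piece is handled by observing that on the compact set $K:=\overline{\{x_n\}}$ the quotient $\theta=f/g$ is Lipschitz: all iterates and their cluster points lie in $S\cap\dom f$, which sits inside the open set where $f^\mathfrak{s}+f^\mathfrak{l}$ is (locally) Lipschitz (Assumption~\ref{a:add}), $g$ is $C^1$, and $g\ge m_0>0$ there by continuity and compactness; hence $|\theta_{n-1}-\theta_n|\le L_\theta\Delta_{n-1}$. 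The remaining terms of $\xi_n$ are differences of consecutive iterates: writing $x_n-u_{n-1}=(x_n-x_{n-1})-\kappa_{n-1}(x_{n-1}-x_{n-2})$ and $x_n-v_{n-1}=(x_n-x_{n-1})-\mu_{n-1}(x_{n-1}-x_{n-2})$ with $\kappa_{n-1}\le\overline{\kappa}$, $\mu_{n-1}\le\overline{\mu}\tau_{n-1}$, and using the Lipschitz continuity of $\nabla f^\mathfrak{s}$, the uniform lower bounds $g\ge m_0$ and $\tau_{n-1}\ge\overline{\tau}/2$ (from $\liminf\tau_n=\overline{\tau}>0$), and boundedness of $\theta_n$ and $\nabla g$ on $K$, one gets constants $A,B$ with $\|w_n\|\le A\Delta_{n-1}+B\Delta_{n-2}$ for all large $n$. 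Adding the $2c\Delta_{n-1}$ contribution from the second block yields $\dist(0,\partial_L h(z_n))\le A'\Delta_{n-1}+B'\Delta_{n-2}$, which is \ref{a:error} with $I=\{1,2\}$, $\beta_n\equiv(A'+B')^{-1}$, $\lambda_1=A'/(A'+B')$, $\lambda_2=B'/(A'+B')$, and $\varepsilon_n\equiv 0$.

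With \ref{a:decrease}--\ref{a:distance} verified, Theorem~\ref{t:abstract}\ref{t:abstract_xn} gives $\sum_n\|x_{n+1}-x_n\|<+\infty$ and convergence of $(x_n)_{n\in\mathbb{N}}$ to some $x_\infty$; part~\ref{t:abstract_crit} (using $\inf_n\beta_n>0$) gives $0\in\partial_L h(x_\infty,x_\infty)$, whose first block is exactly $0\in\partial_L(\frac{f}{g}+\iota_S)(x_\infty)$, so $x_\infty$ is a stationary point of \eqref{e:prob}. For the linear rates, when the KL exponent is $\alpha\le 1/2$ the hypotheses of Theorem~\ref{t:abstract}\ref{t:abstract_linear} hold trivially because $\varepsilon_n\equiv 0$, $\delta=\alpha\beta^2>0$, and $\underline{\imath}=1\le 1$, delivering $h(z_n)-\overline{h}\le\gamma_1\rho^n$ and $\|x_n-x_\infty\|\le\gamma_2\rho^{n/2}$; a harmless index shift (using $h(z_{n+1})\ge\overline{h}$ and $h(x_\infty,x_\infty)=\overline{h}$) gives the stated bound on $|h(x_{n+1},x_n)-h(x_\infty,x_\infty)|$. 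I expect the main obstacle to be the relative-error bound, and within it the cancellation of the $\nabla g$ terms, which is exactly what forces the extra local Lipschitz hypothesis on $\nabla g$ together with the Lipschitzness of $\theta$ on the compact level set.
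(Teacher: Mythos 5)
Your proof is correct and follows the same architecture as the paper's: both apply Theorem~\ref{t:abstract} to the lifted sequence $z_n$ built from consecutive iterates, verify \ref{a:decrease}, \ref{a:continuity}, \ref{a:parameter}, \ref{a:distance} from Theorem~\ref{t:cvg}, and obtain the relative error bound \ref{a:error} by reading off a subgradient of $f+\iota_S$ from the Fermat rule of the Step~2 subproblem and converting it via the quotient formula of Lemma~\ref{l:stationary}\ref{l:stationary_equi}. The one genuine divergence is how the term $(\theta_{n-1}-\theta_n)\nabla g(x_n)$ is controlled. The paper rewrites $\theta_n-\theta_{n+1}$ as $h(z_{n-1})-h(z_n)$ plus the two squared-difference terms, which produces a nonzero, summable error sequence $\varepsilon_n=K_1(h(z_{n-1})-h(z_n))$ and therefore exercises the $\varepsilon_n$-machinery of \ref{a:error}, \ref{a:parameter}, Remark~\ref{r:parameters}, and the $O(\sqrt{\cdot})$ conditions in Theorem~\ref{t:abstract}\ref{t:abstract_linear}. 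You instead argue that $\theta=f/g$ is Lipschitz on the compact closure of the iterates, so that $|\theta_{n-1}-\theta_n|\leq L_\theta\|x_n-x_{n-1}\|$ and one may take $\varepsilon_n\equiv 0$ throughout; this makes the verification of \ref{a:parameter} and of the hypotheses of Theorem~\ref{t:abstract}\ref{t:abstract_linear} trivial. Your route is legitimate: under Assumption~\ref{a:add} the function $f^\mathfrak{s}+f^\mathfrak{l}$ is locally Lipschitz on an open set containing $S\cap\dom f$, all iterates and cluster points lie in $S\cap\dom f$, and $g$ is $C^1$ and bounded below by a positive constant there, so the standard ``locally Lipschitz implies Lipschitz on compacta'' argument (which you should state, since the compact set need not be convex) gives $L_\theta$. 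What you lose is only that your argument no longer illustrates the role of the summable-error terms in the abstract framework, which appears to be part of why the authors chose their route; what you gain is a shorter verification. Two cosmetic points: your $\beta_n$ is first announced as a constant $\beta$ and later set to $(A'+B')^{-1}$, and \ref{a:error} is only established for $n$ large, so (as in the paper) the abstract theorem is really applied to a tail of the sequence; both are harmless but worth saying explicitly.
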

\begin{proof}
Let $z_n=(x_{n+1},x_n)$.
Let $\Omega$ be the set of cluster points of $(z_n)_{n\in \mathbb{N}}$. Theorem~\ref{t:cvg} asserts that the sequence $(z_n)_{n\in \mathbb{N}}$ is in $(S\cap \dom f) \times (S\cap \dom f)$, bounded, and asymptotically regular. Moreover, 
for all $n\in \mathbb{N}$,
\begin{equation}\label{e:decrease'}
h(z_{n+1})+ \alpha\|x_{n+1}-x_n\|^2\leq h(z_n) \quad\text{with $\alpha >0$ given in \eqref{e:c&alpha}}. 
\end{equation}
By combining with Corollary~\ref{c:cvg-stationary}, for every $\overline{z}\in \Omega$, one has $\overline{z}=(\overline{x},\overline{x})$ with $\overline{x}\in S\cap\dom f$ a stationary point of \eqref{e:prob} and
\begin{equation*}
\theta_n=\frac{f(x_n)}{g(x_n)}\to \frac{f(\overline{x})}{g(\overline{x})} \quad\text{as~} n\to +\infty.
\end{equation*}
In particular, $h(z_n)=h(x_{n+1},x_n)=\frac{f(x_{n+1})}{g(x_{n+1})}+c\, \|x_{n+1}-x_n\|^2 \rightarrow \frac{f(\overline{x})}{g(\overline{x})}$ as $n \rightarrow +\infty$.

From Step~\ref{step:main} of Algorithm~\ref{algo:main} and noting that $g_n =\nabla g(x_n)$, we have for all $n\in \mathbb{N}$,
\begin{equation*}
0 \in \partial_L(f^\mathfrak{n}+\iota_S)(x_{n+1}) + \nabla f^\mathfrak{s}(u_n) +\frac{1}{\tau_n}(x_{n+1}-v_n-\tau_n\theta_n\nabla g(x_n))+ \ell(x_{n+1}-u_n),
\end{equation*}
which combined with $\partial_L(f+\iota_S) =\nabla f^\mathfrak{s} +\partial_L(f^\mathfrak{n}+\iota_S)$ yields
\begin{align*}
\hat{x}_{n+1} &:=\nabla f^\mathfrak{s}(x_{n+1}) -\nabla f^\mathfrak{s}(u_n) -\ell(x_{n+1}-u_n) -\frac{1}{\tau_n}(x_{n+1}-v_n) +\theta_n\nabla g(x_n) \\
&\phantom{:}\in \partial_L(f+\iota_S)(x_{n+1}).
\end{align*}
Since $f$ and $S$ are regular at $x_n$, $g$ is continuously differentiable at $x_n$, and $g(x_n) >0$, it holds that
\begin{align*}
\partial_Lh(z_n) 
&=\left\{\partial_L \left(\frac{f}{g}+\iota_S\right)(x_{n+1}) + 2c(x_{n+1}-x_n) \right\} \times \{ 2c(x_n-x_{n+1})\}\\
&=\left\{\frac{g(x_{n+1})\partial_L(f+\iota_S)(x_{n+1})-f(x_{n+1})\nabla g(x_{n+1})}{g(x_{n+1})^2}+ 2c(x_{n+1}-x_n)\right\} \times \{ 2c(x_n-x_{n+1})\}\\
&=\left\{\frac{\partial_L(f+\iota_S)(x_{n+1})-\theta_{n+1}\nabla g(x_{n+1})}{g(x_{n+1})}+ 2c(x_{n+1}-x_n)\right\} \times \{ 2c(x_n-x_{n+1})\},
\end{align*}
where the second equality follows from Lemma~\ref{l:stationary}\ref{l:stationary_equi}. Therefore, we have  $\{x_n^* +2c(x_{n+1}-x_n)\} \times\{ 2c(x_n-x_{n+1})\} \in \partial_Lh(z_n)$ with
\begin{equation*}
x_n^* := \frac{\hat{x}_{n+1} -\theta_{n+1}\nabla g(x_{n+1})}{g(x_{n+1})}.
\end{equation*}

Note that $\tau_n \leq 1/\max\{\sqrt{\beta}\theta_n/\zeta, \delta\} \leq \frac{1}{\delta}$, so $\mu_n \leq \overline{\mu}\tau_n \leq \frac{\overline{\mu}}{\delta}$.
Next, we see that, for all $n\in \mathbb{N}$,
\begin{align*}
\|x_{n+1}-v_n\| &\leq \|x_{n+1}-x_n\| +\mu_n\|x_n-x_{n-1}\|\leq \|x_{n+1}-x_n\| + \frac{\overline{\mu}}{\delta} \|x_n-x_{n-1}\|, \\
\|x_{n+1}-u_n\| &\leq \|x_{n+1}-x_n\| +\kappa_n\|x_n-x_{n-1}\|\leq \|x_{n+1}-x_n\| +\overline{\kappa}\|x_n-x_{n-1}\|,
\end{align*}
and by the Lipschitz continuity of $\nabla f^\mathfrak{s}$,
\begin{equation*}
\|\nabla f^\mathfrak{s}(x_{n+1})-\nabla f^\mathfrak{s}(u_n)\|\leq \ell\|x_{n+1}-u_n\|\leq \ell\|x_{n+1}-x_n\| +\ell\overline{\kappa}\|x_n-x_{n-1}\|.
\end{equation*}
Since $(x_n)_{n\in \mathbb{N}}$ is bounded, the continuity of $\nabla g$ implies that $(\nabla g(x_n))_{n\in \mathbb{N}}$ is also bounded. There thus exists $\mu\in \RPP$ such that, for all $n\in \mathbb{N}$,
$\|\nabla g(x_n)\|\leq \mu$.
Since $\liminf_{n\to +\infty} \tau_n =\overline{\tau} >0$ and $\lim_{n\to +\infty} \|x_{n+1}-x_n\| =0$, there exists $n_0\in \mathbb{N}$ such that, for all $n \geq n_0$,
\begin{equation*}
\tau_n \geq \overline{\tau}/2 \quad\text{and}\quad \|x_{n+1}-x_n\|\leq \varepsilon.
\end{equation*}
Now, from the definition of $h(z_n)$, we see that
\begin{align*}
\theta_n\nabla g(x_n)-\theta_{n+1}\nabla g(x_{n+1})
&= \theta_n(\nabla g(x_n)-\nabla g(x_{n+1})) -c\|x_n-x_{n-1}\|^2\nabla g(x_{n+1}) \\
&\quad +c\|x_{n+1}-x_n\|^2\nabla g(x_{n+1}) +(h(z_{n-1})-h(z_{n}))\nabla g(x_{n+1})
\end{align*}
and by the Lipschitz-type continuity of $\nabla g$ and the boundedness of $(\nabla g(x_n))$, for all $n\geq n_0$,
\begin{align*}
\|\theta_n\nabla g(x_n)-\theta_{n+1}\nabla g(x_{n+1})\|
&\leq \ell_g\theta_n\|x_{n+1}-x_n\| +c\varepsilon\mu\|x_n-x_{n-1}\| \\
&\qquad +c\varepsilon\mu\|x_{n+1}-x_n\| +\mu(h(z_{n-1})-h(z_{n})).
\end{align*}
Altogether, it follows from the definition of $x_n^*$ that, for all $n\geq n_0$,
\begin{align*}
\|\hat{x}_{n+1} -\theta_{n+1}\nabla g(x_{n+1})\|
&\leq \|\nabla f^\mathfrak{s}(x_{n+1})-\nabla f^\mathfrak{s}(u_n)\| +\ell\|x_{n+1}-u_n\| +\frac{1}{\tau_n}\|x_{n+1}-v_n\| \\
&\qquad +\|\theta_n\nabla g(x_n)-\theta_{n+1}\nabla g(x_{n+1})\| \\
&\leq 2\ell\|x_{n+1}-x_n\| +2\ell\overline{\kappa}\|x_n-x_{n-1}\| + \frac{2}{\overline{\tau}}(\|x_{n+1}-x_n\| + \frac{\overline{\mu}}{\delta}\|x_n-x_{n-1}\|) \\
&\qquad +(\ell_g\theta_n+c\varepsilon\mu)\|x_{n+1}-x_n\| +c\varepsilon\mu\|x_n-x_{n-1}\| +\mu(h(z_{n-1})-h(z_{n})).
\end{align*}
Noting that $(\theta_n)_{n\in \mathbb{N}}$ is convergent and hence bounded and recalling that $g(x)\geq m>0$ for all $x\in S\cap \dom f$, we find $K\in \RPP$ such that, for all $n\geq n_0$,
\begin{align*}
\|x_n^*\| &=\frac{\|\hat{x}_{n+1} -\theta_{n+1}\nabla g(x_{n+1})\|}{|g(x_{n+1})|} \\
&\leq K\left(\|x_{n+1}-x_n\| +\|x_n-x_{n-1}\| + (h(z_{n-1})-h(z_{n}))\right).
\end{align*}
We deduce that there exists $K_1\in \RPP$ such that, for all $n\geq n_0$,
\begin{align*}
\dist(0,\partial_L h(z_n)) &\leq \sqrt{\|x_n^* + 2c(x_{n+1}-x_n) \|^2 +4c^2 \|x_n-x_{n+1}\|^2} \\
& \leq  \sqrt{2\|x_n^*\|^2 + 8c^2 \|x_{n+1}-x_n\|^2 +4c^2 \|x_n-x_{n+1}\|^2}\\
&\leq  K_1\left(\|x_{n+1}-x_n\| +\|x_n-x_{n-1}\| +h(z_{n-1})-h(z_{n})\right),
\end{align*}
where the second inequality is from the elementary inequality that $\|a+b\|^2\leq 2\|a\|^2+2\|b\|^2$. Now,  by applying Theorem~\ref{t:abstract} and Remark~\ref{r:parameters} with $I =\{0,1\}$, $\lambda_0 =\lambda_1 =1/2$, $\Delta_n =2K_1\|x_{n+1}-x_n\|$, $\alpha_n \equiv \frac{\alpha}{4K_1^2}>0$, $\beta_n \equiv 1$, and $\varepsilon_n =K_1(h(z_{n-1})-h(z_{n})) \leq K_1(h(z_{n-1})-h(z_{n+1}))$, we get the conclusion.
\end{proof}

\begin{remark}
\label{r:KLfunctions}
In Theorem~\ref{t:cvgKL}, we impose the assumption that the merit function $h(x,y) =\frac{f(x)}{g(x)} +\iota_S(x) +c\|x-y\|^2$ is a KL function with $c$ given in \eqref{e:c&alpha}. Note that sum or quotient of two semi-algebraic functions is a semi-algebraic function, and indicator function of a semi-algebraic set (sets described as union or intersections of finitely many sets which can be expressed as lower level sets of polynomials) is also a semi-algebraic function. We note that this assumption is automatically satisfied when $f$ and $g$ are semi-algebraic functions, and $S$ is a semi-algebraic set. This, in particular, covers all the motivating examples we mentioned in the introduction.
\end{remark}

Next, we see that Algorithm~\ref{algo:main} converges in a linear rate when applied to the scale invariant sparse signal recovery problem and Rayleigh quotient optimization with spherical constraint, if the parameters  $\tau_n$ satisfy $\lim\inf_{n \to \infty} \tau_n =\overline{\tau} >0$.

\begin{proposition}[KL exponent $1/2$ \& linear convergence]
\label{p:linear}
Suppose that $\mathcal{H} =\mathbb{R}^N$ and one of the following holds:
\begin{enumerate}
\item\label{p:linear_rayleigh}
$f(x) =x^\top Ax +\iota_C(x)$, $g(x) =x^\top Bx$, and $S =\mathbb{R}^N$, where $A$ and $B$ are symmetric positive definite matrices and $C :=\{x\in \mathbb{R}^N: \|x\|=1\}$.
\item\label{p:linear_ssr}
$f(x) =\|x\|_1$, $g(x) =\|x\|_2$, and $S =\{x\in \mathbb{R}^N: Ax\leq b, Cx =d\}$, where $A\in \mathbb{R}^{M\times N}$, $b\in \mathbb{R}^M$, $C\in \mathbb{R}^{P\times N}$, and $d\in \mathbb{R}^P$.
\end{enumerate}
Then, for all $c\in \RP$, $h(x,y) =\frac{f(x)}{g(x)} +\iota_S(x) +c\|x-y\|^2$ satisfies the KL property with an exponent of $1/2$ at $(\overline{x},\overline{x})$ for all $\overline{x}\in \dom f$. Consequently, if $\lim\inf_{n \to \infty} \tau_n =\overline{\tau} >0$, then Algorithm~\ref{algo:main} exhibits linear convergence when applied to the above cases.
\end{proposition}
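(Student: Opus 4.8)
The plan is to reduce everything to a single one-variable Kurdyka--{\L}ojasiewicz (KL) estimate with exponent $1/2$ for $F:=\frac{f}{g}+\iota_S$ and to verify that estimate by hand in each of the two cases; the linear-rate conclusion is then immediate from Theorem~\ref{t:cvgKL}. First I would dispose of the extra variable $y$. Writing $\overline{h}:=h(\overline{x},\overline{x})=F(\overline{x})$ and using the subdifferential formula already computed in the proof of Theorem~\ref{t:cvgKL},
\[
\partial_L h(x,y)=\big(\partial_L F(x)+2c(x-y)\big)\times\{2c(y-x)\},
\]
so that $\dist(0,\partial_L h(x,y))^2=\dist(2c(y-x),\partial_L F(x))^2+4c^2\|x-y\|^2$. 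A direct estimate based on $\|u-w\|^2\geq\tfrac12\|u\|^2-\|w\|^2$ yields $\dist(0,\partial_L h(x,y))^2\geq \tfrac14\dist(0,\partial_L F(x))^2+2c^2\|x-y\|^2$, while $h(x,y)-\overline{h}=(F(x)-F(\overline{x}))+c\|x-y\|^2$. Hence, once $\dist(0,\partial_L F(x))^2\geq c_1^2(F(x)-F(\overline{x}))$ holds for $x$ near $\overline{x}$, we obtain $\dist(0,\partial_L h(x,y))^2\geq c_0^2(h(x,y)-\overline{h})$, i.e.\ $h$ inherits the KL property with exponent $1/2$ at $(\overline{x},\overline{x})$ (the value $c=0$ is trivial). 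So it suffices to prove that $F$ has KL exponent $1/2$ at every $\overline{x}\in S\cap\dom f$.

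Next I would pass from $F$ to the Dinkelbach shift $\phi_{\overline{\theta}}:=f-\overline{\theta}g+\iota_S$ with $\overline{\theta}:=f(\overline{x})/g(\overline{x})$. At any non-stationary $\overline{x}$ the KL inequality is automatic: by the robustness/outer semicontinuity of $\partial_L$, the quantity $\dist(0,\partial_L F(\cdot))$ stays bounded away from $0$ near $\overline{x}$ while $F(\cdot)-F(\overline{x})\to0$. So assume $\overline{x}$ stationary. In both cases $g$ is $C^\infty$, positive and bounded on a neighbourhood of $\overline{x}$ in $S$ (for \ref{p:linear_ssr} because $0\notin S$); Lemma~\ref{l:stationary}\ref{l:stationary_equi} then gives $\partial_L F(x)=\tfrac1{g(x)}\partial_L\phi_{f(x)/g(x)}(x)$ and, for $x\in S$, $F(x)-F(\overline{x})=\phi_{\overline{\theta}}(x)/g(x)$ with $\phi_{\overline{\theta}}(\overline{x})=0$. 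Since $\partial_L\phi_{f(x)/g(x)}(x)$ and $\partial_L\phi_{\overline{\theta}}(x)$ differ only by the translation $(f(x)/g(x)-\overline{\theta})\nabla g(x)$, whose norm is $O(F(x)-F(\overline{x}))=o(\sqrt{F(x)-F(\overline{x})})$ as $x\to\overline{x}$, a KL estimate with exponent $1/2$ for $\phi_{\overline{\theta}}$ at $\overline{x}$ transfers (after absorbing the lower-order translation and the bounded positive factor $g(x)$) to one for $F$. Thus the whole problem reduces to showing that $\phi_{\overline{\theta}}$ has KL exponent $1/2$ at $\overline{x}$.

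For case \ref{p:linear_rayleigh} this is clean. Here $\phi_{\overline{\theta}}(x)=x^\top Mx+\iota_C(x)$ with $M:=A-\overline{\theta}B$ symmetric and $C$ the unit sphere, and stationarity forces $M\overline{x}=0$ (the critical points of the Rayleigh quotient on $C$ are exactly the generalized eigenvectors, at which the Euclidean gradient vanishes), so $\phi_{\overline{\theta}}(\overline{x})=0$. On $C$ one has $\partial_L\phi_{\overline{\theta}}(x)=2Mx+\mathbb{R}x$, hence $\dist(0,\partial_L\phi_{\overline{\theta}}(x))$ equals twice the norm of the component of $Mx$ orthogonal to $x$. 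Writing $x=\cos t\,\overline{x}+\sin t\,u$ with $u\perp\overline{x}$, $\|u\|=1$, and using $M\overline{x}=0$ one computes $x^\top Mx=\sin^2 t\,(u^\top Mu)$; an eigendecomposition of $M$ then shows that the kernel directions of $M$ contribute nothing to $x^\top Mx$ while the remaining directions give a uniform quadratic lower bound. This Morse--Bott structure yields $\dist(0,\partial_L\phi_{\overline{\theta}}(x))^2\geq c\,|x^\top Mx|=c\,\phi_{\overline{\theta}}(x)$ near $\overline{x}$, i.e.\ exponent $1/2$.

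Case \ref{p:linear_ssr} is where the main difficulty lies. Here $\phi_{\overline{\theta}}(x)=\|x\|_1-\overline{\theta}\|x\|_2+\iota_S(x)$ with $\phi_{\overline{\theta}}(\overline{x})=0$. Fixing the sign pattern $\sigma_i=\sgn(\overline{x}_i)$ and $I_0=\{i:\overline{x}_i=0\}$, for $x$ near $\overline{x}$ one has $\|x\|_1=\scal{\sigma}{x}+\sum_{i\in I_0}|x_i|$, so $\phi_{\overline{\theta}}=q+P$ where $q(x):=\scal{\sigma}{x}-\overline{\theta}\|x\|_2$ is real-analytic near $\overline{x}$ and $P:=\sum_{i\in I_0}|x_i|+\iota_S$ is polyhedral (hence itself KL with exponent $1/2$). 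The plan is to establish a local error bound $\dist(0,\partial_L\phi_{\overline{\theta}}(x))\geq c\,\sqrt{\phi_{\overline{\theta}}(x)}$ by combining the polyhedral error bound for $P$ with the smoothness of $q$, equivalently by proving metric subregularity of $\partial_L\phi_{\overline{\theta}}$ at $\overline{x}$. The hard point is precisely that the smooth part $-\overline{\theta}\|\cdot\|_2$ is \emph{concave}, so positive definiteness of a Hessian is unavailable; instead the exponent $1/2$ must be extracted from the polyhedral geometry of $S$ together with the scale invariance (degree-$0$ homogeneity) of $\|x\|_1/\|x\|_2$, which confines the relevant perturbations to directions where the piecewise-linear part controls the growth. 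Ruling out a degenerate direction that would worsen the exponent beyond $1/2$ is the crux of the argument. Once KL exponent $1/2$ of $h$ is established in both cases, Theorem~\ref{t:cvgKL} (with $\liminf_{n\to\infty}\tau_n=\overline{\tau}>0$) delivers the asserted linear convergence of $(x_n)_{n\in\mathbb{N}}$ and of $(h(x_{n+1},x_n))_{n\in\mathbb{N}}$.
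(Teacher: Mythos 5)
Your reductions are sound and largely parallel the paper's: the paper disposes of the auxiliary variable $y$ by citing \cite[Theorem~3.6]{LP18} rather than by your direct estimate (which is a correct, self-contained substitute), and your passage to the Dinkelbach shift $\phi_{\overline{\theta}}$, absorbing the translation term of size $O(F(x)-F(\overline{x}))=o(\sqrt{F(x)-F(\overline{x})})$, is a legitimate transfer. For case (i) your argument works, though the paper's version is slightly cleaner: it computes $\partial_L F$ on the sphere directly and observes that the vector $(x^\top Bx)Ax-(x^\top Ax)Bx$ is automatically orthogonal to $x$, so no tangent-space projection and no stationarity of $\overline{x}$ are needed; the whole estimate then rests on the single spectral inequality $\|Mz\|^2\geq c\,z^\top Mz$ for the symmetric matrix $M=A-F(\overline{x})B$, which is also the engine of your Morse--Bott computation.

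The genuine gap is case (ii). You correctly isolate the decisive step --- a local error bound $\dist(0,\partial_L\phi_{\overline{\theta}}(x))\geq c\sqrt{\phi_{\overline{\theta}}(x)}$ for $\phi_{\overline{\theta}}=\|\cdot\|_1-\overline{\theta}\|\cdot\|_2+\iota_S$ near a stationary point --- but you only describe why it is hard (the smooth part is concave, so no Hessian positivity is available, and a degenerate direction must be excluded) and never prove it. Everything before that point is a reduction; without this estimate the proposition is not established for the scale-invariant sparse recovery model. The paper does not prove it from scratch either: it invokes \cite[Theorem~4.4]{ZYP20}, where precisely this error bound for the $L_1/L_2$ model over a polyhedron is worked out using the polyhedral structure of $S$ and of the $\ell_1$ term together with the special form of $\|\cdot\|_2$. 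To complete your proof you must either cite that result or reproduce its argument; the stated plan of combining the polyhedral error bound for $P$ with the analyticity of $q$ does not by itself yield exponent $1/2$, since the sum of a polyhedral function and a smooth (even analytic) function can have a worse exponent --- e.g.\ $\max\{0,x\}+x^4$ has exponent $3/4$ at the origin --- so the additional structure exploited in \cite{ZYP20} is genuinely needed.
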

\begin{proof}
In view of \cite[Theorem~3.6]{LP18} and Theorem~\ref{t:cvgKL}, it suffices to show that $F :=f/g +\iota_S$ is a KL function with an exponent of $1/2$.

\ref{p:linear_rayleigh}: We see that $F(x) =\frac{x^\top Ax}{x^\top Bx} +\iota_C(x)$. For all $x\notin C$, $\partial_L F(x) =\varnothing$. For all $x\in C$, since $\partial_L \iota_C(x) =N_C(x) =\{\xi x: \xi\in \mathbb{R}\}$, it holds that
\begin{equation}\label{e:subdiffF}
\partial_L F(x) =\left\{\frac{2(x^\top Bx)Ax -2(x^\top Ax)Bx}{(x^\top Bx)^2} +\xi x: \xi\in \mathbb{R}\right\}.
\end{equation}
Let $\overline{x}\in \dom \partial_L F$. We must have $\overline{x}\in C$. Let $\varepsilon, \eta\in \left(0,1\right)$ and let $x$ be such that $\|x-\overline{x}\|\leq \varepsilon$ and $F(\overline{x}) <F(x) <F(\overline{x})+\eta$. Then $F(x) <+\infty$, and so $x\in C$. It follows from \eqref{e:subdiffF} that
\begin{align*}
\dist(0,\partial_L F(x)) &=\inf_{\xi\in \mathbb{R}} \left\|\frac{2(x^\top Bx)Ax -2(x^\top Ax)Bx}{(x^\top Bx)^2} +\xi x\right\| \\
&=\inf_{\xi\in \mathbb{R}} \left(\left\|\frac{2(x^\top Bx)Ax -2(x^\top Ax)Bx}{(x^\top Bx)^2}\right\|^2 +\xi^2\right)^{1/2} \\
&=\left\|\frac{2(x^\top Bx)Ax -2(x^\top Ax)Bx}{(x^\top Bx)^2}\right\| \\
&=\frac{2}{x^\top Bx}\left\|(A-F(\overline{x})B)x -(F(x)-F(\overline{x}))Bx\right\|,
\end{align*}
where the second equality follows from the fact that $x^\top(2(x^\top Bx)Ax -2(x^\top Ax)Bx) =0$ and $\|x\| =1$. Now, since $A-F(\overline{x})B$ is a symmetric matrix, there exists $c >0$ such that, for all $z\in \mathbb{R}^N$,
\begin{equation*}
\|(A-F(\overline{x})B)z\|^2\geq c(z^\top(A-F(\overline{x})B)z) =c(z^\top Bz)(F(z)-F(\overline{x})).
\end{equation*}
Therefore,
\begin{equation*}
\dist(0,\partial_L F(x))\geq 2(F(x)-F(\overline{x}))^{1/2}\left(\frac{\sqrt{c}}{\sqrt{x^\top Bx}} -(F(x)-F(\overline{x}))^{1/2}\frac{\|Bx\|}{x^\top Bx}\right).
\end{equation*}
Let $\lambda_{\max}$ and $\lambda_{\min}$ are the maximum and minimum eigenvalues of $B$, respectively. Then $\lambda_{\min} \leq x^\top Bx \leq \lambda_{\max}$ since $\|x\| =1$. By shrinking $\eta$ if necessary, we have
\begin{equation*}
(F(x)-F(\overline{x}))^{1/2}\frac{\|Bx\|}{x^\top Bx}\leq \eta^{1/2}\frac{\|Bx\|}{\lambda_{\min}}\leq \frac{\sqrt{c}}{2\sqrt{\lambda_{\max}}}.
\end{equation*}
We deduce that $\dist(0,\partial_L F(x))\geq \frac{\sqrt{c}}{\sqrt{\lambda_{\max}}}(F(x)-F(\overline{x}))^{1/2}$, and $F$ is thus a KL function with an exponent of $1/2$.

\ref{p:linear_ssr}: By a similar argument as in \cite[Theorem~4.4]{ZYP20}, $F$ is a KL function with an exponent of $1/2$.
\end{proof}

\section{Convergence to strong stationary points}
\label{s:strong}

In this section, we propose another algorithm which converges to a strong lifted stationary points of the fractional programming problem~\eqref{e:prob}. To do this, we now consider the case where Assumption~2 is replaced by the following stronger assumption.

\setcounter{assumption}{1}
\renewcommand{\theassumption}{\arabic{assumption}'}
\begin{assumption}
\label{a:g'}
$g(x) =\max\{g_i(x): 1\leq i\leq p\}$, where each $g_i$ is continuously differentiable on an open set containing $S$ and weakly convex on $S$ with modulus $\beta\in \RP$, and \eqref{a:bound} holds.
\end{assumption}

Recall that the \emph{$\varepsilon$-active set} for $g(x) =\max\{g_i(x): 1\leq i\leq p\}$ is defined by
\begin{equation*}
I_\varepsilon(x) =\{i \in \{1,\dots,p\}: g_i(x)\geq g(x)-\varepsilon\}.
\end{equation*}
We then propose an enhanced extrapolated proximal subgradient algorithm as follows.
\begin{tcolorbox}[
	left=0pt,right=0pt,top=0pt,bottom=0pt,
	colback=blue!10!white, colframe=blue!50!white,
  	boxrule=0.2pt,
  	breakable]
\begin{algorithm}[Enhanced extrapolated proximal subgradient algorithm]
\label{algo:max}
\step{}\label{step:initial'}
Choose $x_{-1} =x_0\in S\cap \dom f$ and set $n =0$. Let $\delta, \omega\in \RPP$, let $\zeta\in \RPP$ be such that $1-\sqrt{\beta}\zeta >0$, and let
\begin{equation*}
\overline{\mu}\in \left[0, \frac{\delta(1-\sqrt{\beta}\zeta)\sqrt{mM}}{2M}\right)
\quad\text{and}\quad
\overline{\kappa}\in \left[0,\sqrt{\frac{m\delta(1-\sqrt{\beta}\zeta)}{\ell M}-\frac{2m\overline{\mu}}{\ell\sqrt{mM}}} \, \right),
\end{equation*}
where $\ell$ is defined in Assumption~\ref{a:f}, $\beta$ is defined in Assumption~\ref{a:g'}, while $m$ and $M$ are given in \eqref{a:bound}. 

\step{}\label{step:main'}
Set $\theta_n =\frac{f(x_n)}{g(x_n)}$ and choose $\tau_n\in \mathbb{R}$ such that $0< \tau_n\leq 1/\max\{\beta\theta_n/(1-\zeta), \delta\}$. Let $u_n =x_n +\kappa_n(x_n-x_{n-1})$ with $\kappa_n\in [0,\overline{\kappa}]$ and $v_n =x_n +\mu_n(x_n-x_{n-1})$ with $\mu_n\in [0,\overline{\mu}\tau_n]$. For each $i_n\in I_\varepsilon(x_n)$, find
\begin{equation*}
w_n^{i_n} \in \argmin_{x\in S} \left(f^\mathfrak{n}(x) +f^\mathfrak{s}(u_n) +\scal{\nabla f^\mathfrak{s}(u_n)}{x-u_n} +\frac{1}{2\tau_n}\|x-v_n-\tau_n\theta_n\nabla g_{i_n}(x_n)\|^2 +\frac{\ell}{2}\|x-u_n\|^2\right).
\end{equation*}

\step{}\label{step:select}
Set $x_{n+1} :=w_n^{\hat{i}_n}$, where
\begin{equation*}
\hat{i}_n\in \argmin_{i_n\in I_\varepsilon(x_n)} \left(f(w_n^{i_n}) -\theta_ng(w_n^{i_n}) +\frac{1}{2}\left(\frac{1-\sqrt{\beta}\zeta}{\tau_n} -\frac{M\mu_n}{\sqrt{mM}\tau_n}\right)\|w_n^{i_n}-x_n\|^2\right).
\end{equation*}

\step{}If a termination criterion is not met, let $n =n+1$ and go to Step~\ref{step:main'}.
\end{algorithm}
\end{tcolorbox}

\noindent Before we proceed, we note that Step~\ref{step:select} in Algorithm~\ref{algo:max} is motivated by the recent work of Pang et al. \cite{PRA16} which proposes an enhanced version of the DC algorithm for solving DC programs that converges to a stronger notion of stationary points, namely, to d-stationary points. Similar to the work of Pang et al., in Step~\ref{step:main'}, we need to compute the proximal mapping of $f^\mathfrak{n}+\iota_S$ for  $|I_\varepsilon(x_n)|$ times (which is at most $p$). Although comparing to Algorithm~\ref{algo:main}, the computation cost in solving each subproblem may be higher, as we will see later, the algorithm converges to a strong lifted stationary point of \eqref{e:prob}.

\begin{theorem}
\label{t:max}
Let $(x_n)_{n\in \mathbb{N}}$ be the sequence generated by Algorithm~\ref{algo:max}. Suppose that Assumptions~\ref{a:f} and \ref{a:g'} hold, and that the set $\{x\in S: \frac{f(x)}{g(x)}\leq \frac{f(x_0)}{g(x_0)}\}$ is bounded. Then the following hold:
\begin{enumerate}
\item\label{t:max_decrease}
For all $n\in \mathbb{N}$, $x_n\in S\cap \dom f$ and
\begin{equation}
F_n :=\frac{f(x_n)}{g(x_n)} +\left(\frac{\ell\overline{\kappa}^2}{2m}+\frac{\overline{\mu}}{2\sqrt{mM}}\right)\|x_n-x_{n-1}\|^2
\end{equation}
is nonincreasing and convergent.
Consequently, the sequence $\left(\frac{f(x_n)}{g(x_n)}\right)_{n\in \mathbb{N}}$ is convergent.
\item\label{t:max_seq}
The sequence $(x_n)_{n\in \mathbb{N}}$ is bounded and asymptotically regular. In particular,
\begin{equation*}
\sum_{n=0}^{+\infty} \|x_{n+1}-x_n\|^2 <+\infty.
\end{equation*}
\item\label{t:max_crit}
If $\liminf_{n\to +\infty} \tau_n =\overline{\tau} >0$, then, for every cluster point $\overline{x}$ of $(x_n)_{n\in \mathbb{N}}$, it holds that $\overline{x}\in S\cap\dom f$, $\lim_{n\to +\infty} \frac{f(x_n)}{g(x_n)} =\frac{f(\overline{x})}{g(\overline{x})}$, and
\begin{equation}\label{eq:99}
\frac{f(\overline{x})}{g(\overline{x})}\bigcup_{i \in I_0(\overline{x})} \nabla g_i(\overline{x})\subseteq \partial_L(f+\iota_S)(\overline{x}).
\end{equation}
In addition, if $f$ is weakly convex on $S$, then $\overline{x}$ is a strong lifted stationary point of \eqref{e:prob}. 
\end{enumerate}
\end{theorem}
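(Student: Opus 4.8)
The plan is to run the descent and boundedness parts along the template of Theorem~\ref{t:cvg}, and then to exploit the greedy selection in Step~\ref{step:select} to promote the limiting inclusion from the single selected index to the whole active set. Throughout write $\theta_n=f(x_n)/g(x_n)$ and let $I_0(x)=\{i:g_i(x)=g(x)\}\subseteq I_\varepsilon(x)$ be the exact active set.

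For \ref{t:max_decrease} and \ref{t:max_seq}, at each $n$ I would fix a genuinely active index $j\in I_0(x_n)$, so $g_j(x_n)=g(x_n)$. Repeating the computation behind \eqref{e:fxfx+} for the $j$-th subproblem (convexity of $f^\mathfrak{s}$, optimality of $w_n^{j}$, and the descent Lemma~\ref{l:descent} for $f^\mathfrak{s}$), then setting $x=x_n$ and invoking the weak-convexity subgradient inequality of Lemma~\ref{l:subgradine} applied to $g_j$ together with Young's inequality with $\omega=\sqrt{m/M}$ (and the same step-size bookkeeping on $\tau_n$ as in Theorem~\ref{t:cvg}), yields
\begin{equation*}
f(x_n)-\theta_n g_{j}(x_n)\ge f(w_n^{j})-\theta_n g_{j}(w_n^{j})+\tfrac12\Big(\tfrac{1-\sqrt{\beta}\zeta}{\tau_n}-\tfrac{M\mu_n}{\sqrt{mM}\tau_n}\Big)\|w_n^{j}-x_n\|^2-\Big(\tfrac{\ell\kappa_n^2}{2}+\tfrac{\omega\mu_n}{2\tau_n}\Big)\|x_n-x_{n-1}\|^2 .
\end{equation*}
The two decisive simplifications are that $g_j(x_n)=g(x_n)$ forces the left-hand side to vanish, and that $g_j(w_n^{j})\le g(w_n^{j})$ lets me replace $g_j$ by $g$ in $-\theta_n g_j(w_n^{j})$. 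Since the coefficient above coincides exactly with the one used in Step~\ref{step:select}, the minimality in that step transfers the bound from $w_n^{j}$ to $x_{n+1}=w_n^{\hat{i}_n}$, producing precisely \eqref{e:F-decrease} with denominator $g(x_{n+1})$; from here the argument of Theorem~\ref{t:cvg}\ref{t:cvg_decrease}--\ref{t:cvg_seq} applies verbatim, giving $F_{n+1}+\alpha\|x_{n+1}-x_n\|^2\le F_n$ with $\alpha>0$, convergence of $(F_n)$, boundedness of $(x_n)$ in the level set, and $\sum_n\|x_{n+1}-x_n\|^2<+\infty$.

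For \ref{t:max_crit}, the limit $\lim\frac{f(x_n)}{g(x_n)}=\frac{f(\overline x)}{g(\overline x)}=:\theta_\infty$ and $\overline x\in S\cap\dom f$ follow as in Theorem~\ref{t:cvg}\ref{t:cvg_crit}, by passing to the limit in the per-index inequality at the selected iterate and using lower semicontinuity of $f$; along $x_{k_n}\to\overline x$ asymptotic regularity also gives $u_{k_n},v_{k_n}\to\overline x$ and $x_{k_n+1}\to\overline x$. To prove \eqref{eq:99}, fix $i\in I_0(\overline x)$; by continuity of $g,g_i$ one has $i\in I_\varepsilon(x_{k_n})$ for all large $n$, so $w_{k_n}^i$ is computed. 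Writing the selection objective $m_n^i:=f(w_n^i)-\theta_n g(w_n^i)+\tfrac12(\cdots)\|w_n^i-x_n\|^2$, the value convergence gives $m_{k_n}^{\hat{i}_{k_n}}\to0$, the per-index inequality bounds $m_{k_n}^i$ above by $\theta_{k_n}\big(g(x_{k_n})-g_i(x_{k_n})\big)+o(1)\to0$, and minimality gives $m_{k_n}^i\ge m_{k_n}^{\hat{i}_{k_n}}$; hence $m_{k_n}^i\to0$. Combining this with $f\ge0$, the bounds $m\le g\le M$ from \eqref{a:bound}, and $\frac{f(w_{k_n}^i)}{g(w_{k_n}^i)}\le\theta_\infty+o(1)$, I would pin the limit of any convergent subsequence of $w_{k_n}^i$ to $\overline x$, i.e.\ $w_{k_n}^i\to\overline x$. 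Then letting $n\to+\infty$ in the Fermat condition of the $i$-th subproblem, which after cancelling $\nabla f^\mathfrak{s}$ reads
$\nabla f^\mathfrak{s}(w_{k_n}^i)-\nabla f^\mathfrak{s}(u_{k_n})-\tfrac1{\tau_{k_n}}(w_{k_n}^i-v_{k_n})-\ell(w_{k_n}^i-u_{k_n})+\theta_{k_n}\nabla g_i(x_{k_n})\in\partial_L(f+\iota_S)(w_{k_n}^i)$,
and using continuity of $\nabla f^\mathfrak{s},\nabla g_i$, $\liminf\tau_n>0$, $w_{k_n}^i\to\overline x$, and the robustness property \eqref{e:robustness} of $\partial_L$, yields $\theta_\infty\nabla g_i(\overline x)\in\partial_L(f+\iota_S)(\overline x)$, which is \eqref{eq:99}.

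I expect the main obstacle to be exactly the claim $w_{k_n}^i\to\overline x$ for \emph{every} active index rather than only the selected one: the greedy minimality must transfer the vanishing of the selected merit to all of $I_0(\overline x)$, and converting the resulting one-sided estimate $f(w^\ast)-\theta_\infty g(w^\ast)+C\|w^\ast-\overline x\|^2\le0$ into $w^\ast=\overline x$ requires upgrading lower semicontinuity of $f$ to genuine convergence $f(w_{k_n}^i)\to f(w^\ast)$ along the auxiliary sequence (again via a $\limsup$ estimate extracted from the subproblem inequality) together with the fractional-value bound $\tfrac{f(w^\ast)}{g(w^\ast)}\le\theta_\infty$; this is the one place where the nonsmoothness of $f^\mathfrak{n}$ and the mere positivity and boundedness of $g$ interact most delicately, and the $\varepsilon$-active set must be shown to collapse onto $I_0(\overline x)$ in the limit. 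The remaining assertion is then soft: if $f+\iota_S$ is weakly convex, then $\partial_L(f+\iota_S)(\overline x)$ is convex by Lemma~\ref{l:weaklycvx}\ref{l:weaklycvx_cvx}, while $g=\max_i g_i$ is weakly convex, hence regular, so the max-rule gives $\partial_L g(\overline x)=\conv\{\nabla g_i(\overline x):i\in I_0(\overline x)\}$. Taking convex hulls in \eqref{eq:99} and using $f(\overline x)=\theta_\infty g(\overline x)$ gives
\begin{equation*}
f(\overline x)\,\partial_L g(\overline x)=g(\overline x)\,\theta_\infty\,\conv\{\nabla g_i(\overline x):i\in I_0(\overline x)\}\subseteq g(\overline x)\,\partial_L(f+\iota_S)(\overline x),
\end{equation*}
which is precisely the definition of a strong lifted stationary point of \eqref{e:prob}.
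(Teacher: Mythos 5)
Your treatment of \ref{t:max_decrease} and \ref{t:max_seq} is essentially the paper's: the per-index inequality for an exactly active $j\in I_0(x_n)$, the cancellation $f(x_n)-\theta_n g_j(x_n)=0$, the replacement $g_j(w_n^j)\le g(w_n^j)$, and the transfer to $x_{n+1}$ via the minimality in Step~\ref{step:select} all match, and the concluding convex-hull argument for strong lifted stationarity (convexity of $\partial_L(f+\iota_S)(\overline x)$ from Lemma~\ref{l:weaklycvx}\ref{l:weaklycvx_cvx}, $\partial_L g(\overline x)=\conv\{\nabla g_i(\overline x):i\in I_0(\overline x)\}$) is also the paper's.

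The gap is exactly where you flagged it, and it is not closable by the estimates you propose: your route to \eqref{eq:99} requires $w_{k_n}^i\to\overline x$ for \emph{every} $i\in I_0(\overline x)$, and this does not follow. What the greedy selection and the subproblem optimality give you is $m_{k_n}^i\to 0$, i.e.
\begin{equation*}
f(w_{k_n}^i)-\theta_{k_n}g(w_{k_n}^i)+\tfrac{C_{k_n}}{2}\|w_{k_n}^i-x_{k_n}\|^2\to 0 ,
\end{equation*}
but to extract $\|w_{k_n}^i-x_{k_n}\|\to 0$ from this you need the lower bound $\liminf\bigl(f(w_{k_n}^i)-\theta_{k_n}g(w_{k_n}^i)\bigr)\ge 0$, equivalently $\liminf f(w_{k_n}^i)/g(w_{k_n}^i)\ge\theta_\infty$. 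The bound you invoke, $f(w_{k_n}^i)/g(w_{k_n}^i)\le\theta_\infty+o(1)$, points in the wrong direction, and the only a priori lower bound available from $f\ge 0$ and $g\le M$ is the fixed constant $-\theta_0 M$. Since $\theta_\infty$ is merely the limiting value along the iterates and not the infimum of $f/g$ over $S$, a cluster point $w^*$ of $(w_{k_n}^i)$ with $f(w^*)/g(w^*)<\theta_\infty$ and $w^*\ne\overline x$ is perfectly consistent with $m_{k_n}^i\to 0$; nothing rules it out. The paper sidesteps the issue entirely: it chains Step~\ref{step:select} with the optimality of $w_n^{i_n}$ into the single inequality \eqref{e:fxfx+'}, in which the auxiliary point $w_n^{i_n}$ cancels and only $x_{n+1}$, the test point $x$, and the data $g_{i}(x_{k_n})$, $\nabla g_{i}(x_{k_n})$ survive. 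Passing to the limit there (first with $x=\overline x$ to get $f(x_{k_n+1})\to f(\overline x)$, then with arbitrary $x\in S$) shows that $\overline x$ itself minimizes $\varphi(x)=f(x)+(\tfrac{1}{2\overline\tau}+\tfrac{\ell}{2})\|x-\overline x\|^2-\theta_\infty\scal{\nabla g_i(\overline x)}{x}$ over $S$, and Fermat's rule applied at $\overline x$ (not at a limit of the $w_{k_n}^i$, so no robustness argument for $\partial_L$ along an auxiliary sequence is needed) yields $\theta_\infty\nabla g_i(\overline x)\in\partial_L(f+\iota_S)(\overline x)$ for each $i\in I_0(\overline x)$. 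You should restructure the proof of \eqref{eq:99} along these lines; as written, the key convergence claim for the non-selected subproblem solutions is unsupported.
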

\begin{proof}
\ref{t:max_decrease}\&\ref{t:max_seq}: We first see that, for all $n\in \mathbb{N}$,
$x_n\in S\cap \dom f$,
and so
$g(x_n) >0 \text{~~and~~} \theta_n =\frac{f(x_n)}{g(x_n)}\geq 0$.

Next, for all $n\in \mathbb{N}$, $i_n\in I_\varepsilon(x_n)$, and $x\in S$,
\begin{align}\label{eq:use0}
f(w_n^{i_n}) &= f^\mathfrak{n}(w_n^{i_n})+f^\mathfrak{s}(w_n^{i_n}) \nonumber\\
&\leq f^\mathfrak{n}(w_n^{i_n}) + f^\mathfrak{s}(u_n) +\scal{\nabla f^\mathfrak{s}(u_n)}{w_n^{i_n}-u_n} +\frac{\ell}{2}\|w_n^{i_n}-u_n\|^2 \nonumber \\
&\leq f^\mathfrak{n}(x) + f^\mathfrak{s}(u_n) +\scal{\nabla f^\mathfrak{s}(u_n)}{x-u_n} +\frac{1}{2\tau_n}\|x-v_n-\tau_n\theta_n\nabla g_{i_n}(x_n)\|^2+\frac{\ell}{2}\|x-u_n\|^2 \nonumber\\
&\qquad -\frac{1}{2\tau_n}\|w_n^{i_n}-v_n-\tau_n\theta_n\nabla g_{i_n}(x_n)\|^2 \nonumber\\
&\leq f^\mathfrak{n}(x) +f^\mathfrak{s}(x) +\frac{1}{2\tau_n}\|x-v_n-\tau_n\theta_n\nabla g_{i_n}(x_n)\|^2 +\frac{\ell}{2}\|x-u_n\|^2 \nonumber\\
&\qquad -\frac{1}{2\tau_n}\|w_n^{i_n}-v_n-\tau_n\theta_n\nabla g_{i_n}(x_n)\|^2 \nonumber\\
&= f(x) +\frac{1}{2\tau_n}\|x-v_n\|^2 -\frac{1}{2\tau_n}\|w_n^{i_n}-v_n\|^2 +\theta_n\scal{\nabla g_{i_n}(x_n)}{w_n^{i_n}-x} +\frac{\ell}{2}\|x-u_n\|^2 \nonumber\\
&= f(x) +\frac{1}{2\tau_n}\|x-v_n\|^2 -\frac{1}{2\tau_n}\|x_n-v_n\|^2 -\frac{1}{2\tau_n}\|w_n^{i_n}-x_n\|^2 +\frac{\mu_n}{\tau_n}\scal{w_n^{i_n}-x_n}{x_n-x_{n-1}} \nonumber \\
&\qquad +\theta_n\scal{\nabla g_{i_n}(x_n)}{w_n^{i_n}-x_n} -\theta_n\scal{\nabla g_{i_n}(x_n)}{x-x_n} +\frac{\ell}{2}\|x-u_n\|^2,
\end{align}
where the first inequality is from the fact that $\nabla f^\mathfrak{s}$ is Lipschitz continuous with modulus $\ell$ (Lemma~\ref{l:descent}), the second inequality is from Step~\ref{step:main'} of Algorithm~\ref{algo:max}, the third inequality follows from the convexity of $f^\mathfrak{s}$, and the last equality uses the fact that $x_n-v_n =-\mu_n(x_n-x_{n-1})$.
For $\omega =\sqrt{m/M} >0$, one has from Young's inequality that
\begin{align}\label{eq:use1}
\scal{w_n^{i_n}-x_n}{x_n-x_{n-1}} &\leq \frac{1}{2\omega}\|w_n^{i_n}-x_n\|^2 +\frac{\omega}{2}\|x_n-x_{n-1}\|^2 \notag \\
&= \frac{M}{2\sqrt{mM}}\|w_n^{i_n}-x_n\|^2 +\frac{m}{2\sqrt{mM}}\|x_n-x_{n-1}\|^2.
\end{align}
It follows from Assumption~\ref{a:g'} that $g$ is regular and weakly convex with modulus $\beta$ on $S$. By Lemma~\ref{l:subgradine},
\begin{equation}\label{eq:use2}
\scal{\nabla g_{i_n}(x_n)}{w_n^{i_n}-x_n}\leq g_{i_n}(w_n^{i_n})-g_{i_n}(x_n) +\frac{\beta}{2}\|w_n^{i_n}-x_n\|^2.
\end{equation}
Combining inequalities \eqref{eq:use0}, \eqref{eq:use1} and \eqref{eq:use2}, and noting that $g_{i_n}(w_n^{i_n})\leq g(w_n^{i_n})$ by the definition of $g$ and that $\beta\theta_n\leq \sqrt{\beta}\zeta/\tau_n$ by the choice of $\tau_n$, one has
\begin{align*}
f(w_n^{i_n})
&\leq f(x) +\frac{1}{2\tau_n}\|x-v_n\|^2 -\frac{1}{2\tau_n}\|x_n-v_n\|^2 -\frac{1}{2}\left(\frac{1-\sqrt{\beta}\zeta}{\tau_n} -\frac{M\mu_n}{\sqrt{mM}\tau_n}\right)\|w_n^{i_n}-x_n\|^2 \\
&\qquad +\theta_n(g(w_n^{i_n})-g_{i_n}(x_n)) -\theta_n\scal{\nabla g_{i_n}(x_n)}{x-x_n} +\frac{\ell}{2}\|x-u_n\|^2 +\frac{m\mu_n}{2\sqrt{mM}\tau_n}\|x_n-x_{n-1}\|^2.
\end{align*}
Now, using the definition of $x_{n+1}$, we derive that, for all $n\in \mathbb{N}$, $i_n\in I_\varepsilon(x_n)$, and $x\in S$,
\begin{align}\label{e:fxfx+'}
&f(x_{n+1}) -\theta_ng(x_{n+1}) +\frac{1}{2}\left(\frac{1-\sqrt{\beta}\zeta}{\tau_n} -\frac{M\mu_n}{\sqrt{mM}\tau_n}\right)\|x_{n+1}-x_n\|^2 \notag \\
&\leq f(w_n^{i_n}) -\theta_ng(w_n^{i_n}) +\frac{1}{2}\left(\frac{1-\sqrt{\beta}\zeta}{\tau_n} -\frac{M\mu_n}{\sqrt{mM}\tau_n}\right)\|w_n^{i_n}-x_n\|^2 \notag \\
&\leq f(x) -\theta_ng_{i_n}(x_n) +\frac{1}{2\tau_n}\|x-v_n\|^2 -\frac{1}{2\tau_n}\|x_n-v_n\|^2 \notag \\
&\qquad -\theta_n\scal{\nabla g_{i_n}(x_n)}{x-x_n} +\frac{\ell}{2}\|x-u_n\|^2 +\frac{m\mu_n}{2\sqrt{mM}\tau_n}\|x_n-x_{n-1}\|^2.
\end{align}
Let $i_n\in I_0(x_n)\subseteq I_\varepsilon(x_n)$. Then $g_{i_n}(x_n) =g(x_n)$. Since $f(x_n) =\theta_ng(x_n)$ and $x_n-u_n =-\kappa_n(x_n-x_{n-1})$, letting $x =x_n$ in \eqref{e:fxfx+'} yields
\begin{equation*}
f(x_{n+1}) -\theta_ng(x_{n+1}) +\frac{1}{2}\left(\frac{1-\sqrt{\beta}\zeta}{\tau_n} -\frac{M\mu_n}{\sqrt{mM}\tau_n}\right)\|x_{n+1}-x_n\|^2 \leq \frac{1}{2}\left(\ell\kappa_n^2+\frac{m\mu_n}{\sqrt{mM}\tau_n}\right)\|x_n-x_{n-1}\|^2.
\end{equation*}
Dividing $g(x_{n+1})>0$ on both sides and recalling that $m \leq g(x_{n+1}) \leq M$, $\mu_n \leq \overline{\mu} \tau_n$, and $1/\tau_n \geq \delta$, we have that
{\small \begin{equation*}
\frac{f(x_{n+1})}{g(x_{n+1})} +\left(\frac{\delta(1-\sqrt{\beta}\zeta)}{2M} -\frac{\overline{\mu}}{2\sqrt{mM}}\right)\|x_{n+1}-x_n\|^2 \leq \frac{f(x_n)}{g(x_n)} +\left(\frac{\ell\kappa_n^2}{2m}+\frac{\overline{\mu}}{2\sqrt{mM}}\right)\|x_n-x_{n-1}\|^2.
\end{equation*}}
Proceeding as in the proof of Theorem~\ref{t:cvg}\ref{t:cvg_decrease}\&\ref{t:cvg_seq}, we obtain conclusions \ref{t:max_decrease} and \ref{t:max_seq} of this theorem.

\ref{t:max_crit}: In view of \ref{t:max_decrease}, we set
\begin{equation*}
\overline{\theta} :=\lim_{n\to +\infty} \theta_n =\lim_{n\to +\infty} \frac{f(x_n)}{g(x_n)}.
\end{equation*}
Let $\overline{x}$ be a cluster point of $(x_n)_{n\in \mathbb{N}}$ and let $(x_{k_n})_{n\in \mathbb{N}}$ be a subsequence convergent to $\overline{x}$. Then $\overline{x}\in S$ as well as $x_{k_n+1}\to \overline{x}$, $u_{k_n}\to \overline{x}$, and $v_{k_n}\to \overline{x}$ due to the asymptotic regularity of $(x_n)_{n\in \mathbb{N}}$. By the continuity of each $g_i$, there exists $n_0\in \mathbb{N}$ such that, for all $i\in \{1,\dots,p\}$ and all $n\geq n_0$, $g_i(x_{k_n})\geq g_i(\overline{x}) -\varepsilon/2$ and $g(\overline{x})\geq g(x_{k_n}) -\varepsilon/2$. It follows that, for all $n\geq n_0$,
$I_0(\overline{x})\subseteq  I_{\varepsilon}(x_{k_n})$.

Let $n\geq n_0$ and let $i \in I_0(\overline{x})\subseteq  I_{\varepsilon}(x_{k_n})$. We have from \eqref{e:fxfx+'} that, for all $x\in S$,
\begin{align}\label{e:fxkn+}
&f(x_{k_n+1}) -\theta_{k_n}g(x_{k_n+1}) +\frac{1}{2}\left(\frac{1-\sqrt{\beta}\zeta}{\tau_{k_n}} -\frac{M\mu_{k_n}}{\sqrt{mM}\tau_{k_n}}\right)\|x_{k_n+1}-x_{k_n}\|^2 \notag \\
&\leq f(x) -\theta_{k_n} g_i(x_{k_n}) +\frac{1}{2\tau_{k_n}}\|x-v_{k_n}\|^2 -\frac{1}{2\tau_{k_n}}\|x_{k_n}-v_{k_n}\|^2 \notag \\
&\qquad -\theta_{k_n}\scal{\nabla g_i(x_{k_n})}{x-x_{k_n}} +\frac{\ell}{2}\|x-u_{k_n}\|^2 +\frac{m\mu_{k_n}}{2\sqrt{mM}\tau_{k_n}}\|x_{k_n}-x_{k_n-1}\|^2.
\end{align}
It follows from the continuity of $g$, $g_i$, and $\nabla g_i$ that $g(x_{k_n+1})\to g(\overline{x})$, $g_i(x_{k_n})\to g_i(\overline{x}) =g(\overline{x})$ (as $i\in I_0(\overline{x}))$, and $\nabla g_i(x_{k_n})\to \nabla g_i(\overline{x})$. Letting $x =\overline{x}$ and $n \to +\infty$ in \eqref{e:fxkn+} and noting that $\overline{\tau} =\liminf_{k \to \infty}\tau_n >0$, we have
$\limsup_{n\to +\infty} f(x_{k_n+1}) \leq f(\overline{x})$.
Combining with the lower semicontinuity of $f$ gives $f(x_{k_n+1}) \to f(\overline{x})$ as $n \to +\infty$. Thus, $\theta_{k_n} \to \overline{\theta} =\frac{f(\overline{x})}{g(\overline{x})}$ as $n \to +\infty$.

Now, letting $n \to +\infty$ in \eqref{e:fxkn+}, we obtain that, for all $x \in S$,
\begin{equation*}
f(\overline{x}) \leq f(x) +\left(\frac{1}{2\overline{\tau}}+\frac{\ell}{2}\right)\|x-\overline{x}\|^2 +\frac{f(\overline{x})}{g(\overline{x})}\scal{\nabla g_{i}(\overline{x})}{\overline{x}-x}.
\end{equation*}
This shows that $\overline{x}$ minimizes the function $\varphi$ over $S$, where
\begin{equation*}
\varphi(x) :=f(x) +\left(\frac{1}{2\overline{\tau}}+\frac{\ell}{2}\right)\|x-\overline{x}\|^2 -\frac{f(\overline{x})}{g(\overline{x})}\scal{\nabla g_{i}(\overline{x})}{x}
\end{equation*}
In particular, one sees that, for all $i \in I_0(\overline{x})$,
$\frac{f(\overline{x})}{g(\overline{x})}  \  \nabla g_{i}(\overline{x}) \in \partial_L (f+\iota_S)(\overline{x})$.
So, $\overline{x}\in S\cap \dom f$ and
\begin{equation}\label{eq:strong_critical}
\bigcup_{i \in I_0(\overline{x})}\frac{f(\overline{x})}{g(\overline{x})}  \  \nabla g_{i}(\overline{x}) \subseteq \partial_L (f+\iota_S)(\overline{x}).
\end{equation}
By taking convex hull on both sides, we see that
\begin{equation*}
\frac{f(\overline{x})}{g(\overline{x})}\partial_L g(\overline{x})={\rm conv}\bigcup_{i \in I_0(\overline{x})}\frac{f(\overline{x})}{g(\overline{x})}  \  \nabla g_{i}(\overline{x}) \subseteq {\rm conv} \partial_L (f+\iota_S)(\overline{x}).
\end{equation*}
As $f$ is weakly convex on $S$, Lemma~\ref{l:weaklycvx}\ref{l:weaklycvx_cvx} implies that $\partial (f+\iota_S)(\overline{x})$ is convex. Thus, the conclusion follows.
\end{proof}

\begin{remark}[Absence of the boundedness condition]
As with Algorithm~\ref{algo:main} and Theorem~\ref{t:cvg}, in the case where \eqref{a:bound} fails, if we set $\overline{\mu} =\overline{\kappa} =0$ in Step~\ref{step:initial'} and let
\begin{equation*}
\hat{i}_n\in \argmin_{i_n\in I_\varepsilon(x_n)} \left(f(w_n^{i_n}) -\theta_ng(w_n^{i_n}) +\frac{1-\sqrt{\beta}\zeta}{2\tau_n}\|w_n^{i_n}-x_n\|^2\right)
\end{equation*}
in Step~\ref{step:select} of Algorithm~\ref{algo:max}, then Theorem~\ref{t:max} still holds with $F_n =\frac{f(x_n)}{g(x_n)}$.
\end{remark}

\begin{remark}[Discussion of the results]
\begin{enumerate}
\item
Firstly, a close inspection of the proof and noting that, for all $\eta<\varepsilon$, one has  for all large $n$, $I_\eta(\overline{x})\subseteq  I_{\varepsilon}(x_{k_n})$.
So, \eqref{eq:99} in the conclusion of Theorem~\ref{t:max}\ref{t:max_crit} indeed can be strengthened as: for all $\eta<\varepsilon$,
\begin{equation*}
\frac{f(\overline{x})}{g(\overline{x})}\bigcup_{i \in I_\eta(\overline{x})} \nabla g_i(\overline{x})\subseteq \partial_L(f+\iota_S)(\overline{x}).
\end{equation*}
\item
Secondly, following the same method of proof used in Theorem \ref{t:cvgKL}, one can establish the global convergence of Algorithm~\ref{algo:max} under the KL assumptions in Theorem \ref{t:cvgKL} and also the additional assumption that $I_0(\overline{x}) =\{i\in\{1,\dots,p\}: g_i(\overline{x})=g(\overline{x})\}$ is a singleton for all $\overline{x} \in \Omega$, where $\Omega$ is the set of cluster points of $(x_n)_{n\in \mathbb{N}}$. Another sufficient condition ensuring the global convergence would be any point $\overline{x} \in\Omega$ is isolated. For brevity purpose, we omit the proof here. Unfortunately, these conditions are rather restrictive for the setting of Algorithm~\ref{algo:max}. It would be interesting to see how one can obtain further weaker conditions ensuring the global convergence of Algorithm~\ref{algo:max}. This would be an interesting open question and will be examined later.
\end{enumerate}
\end{remark}

\section{Numerical examples}
\label{s:numerical}

In the section, we illustrate our proposed algorithms via numerical examples. We first start with an explicit analytic example and use it to demonstrate the different behavior of Algorithm~\ref{algo:main} and Algorithm~\ref{algo:max} as well as the effect of the extrapolations. Then, we examine the performance of the algorithm for the scale invariant sparse signal reconstruction model. All the numerical tests were conducted on a computer with a 2.8 GHz
Intel Core i7 and 8 GB RAM, equipped with MATLAB R2015a.

\subsection{An analytical example}

Consider the analytical example discussed in Example~\ref{e:ex}
\begin{equation}\label{e:analytic}
\min_{x \in [-1,1]} \frac{x^2+1}{|x|+1}. \tag{EP$_1$}
\end{equation}
In this case, $g(x)=|x|+1$ is convex, and so, $\beta=0$. Also, for all $x \in [-1,1]$, $m \leq g(x) \leq M$, where $m=1$ and $M=2$. The numerator $f(x)=f^{\mathfrak{s}}(x)=x^2+1$ is a convex and continuously differentiable function whose gradient is Lipschitz continuous with modulus $\ell=2$.

{\bf Algorithm~\ref{algo:main} vs. Algorithm~\ref{algo:max}.} Let $\delta=\frac{\ell M}{m}=4$ and $\tau_n=\frac{1}{\delta}=\frac{1}{4}$ for all $n$. Set $\overline{\mu}=0$ and let $\overline{\kappa} \in (0,1)$ and $\kappa_n \in [0,\overline{\kappa}]$. We now compare the behavior of Algorithm~\ref{algo:main} and Algorithm~\ref{algo:max} for \eqref{e:analytic}:

Firstly, it can be directly verified that $g_n= {\rm sign}(x_n) \in \partial g(x_n)$ and that $f^\mathfrak{s}(u_n) +\langle \nabla f^\mathfrak{s}(u_n),x-u_n\rangle +\frac{\ell}{2}\|x-u_n\|^2 =x^2+1$. In this case, Algorithm~\ref{algo:main} reduces to
\begin{equation*}
x_{n+1}= {\rm P}_{[-1,1]} \left(\frac{2}{3}\left[x_n+\frac{1}{4} \frac{x_n^2+1}{|x_n|+1} {\rm sign}(x_n)\right]\right).
\end{equation*}
If one chooses as initial point $x_0=0$, then $x_n=0$ for all $n$, and so, $(x_n)_{n \in \mathbb{N}}$ converges to a lifted stationary point (but not a strong lifted stationary point).

If one chooses as initial point $x_0>0$, then, by induction, it is easy to see that $x_n>0$ and so, $x_n \in (0,1]$. This implies that
\begin{equation*}
x_{n+1}= {\rm P}_{[-1,1]} \left(\frac{2}{3}\left[x_n+ \frac{x_n^2+1}{4(x_n+1)} \right]\right)= \frac{2}{3}\left[x_n+ \frac{x_n^2+1}{4(x_n+1)} \right],
\end{equation*}
where  the last equality is from the fact that $x_n+ \frac{x_n^2+1}{4(x_n+1)} \in [0,\frac{3}{2}]$ for all $x_n \in (0,1]$. Here, $P_{[-1,1]}$ denotes the Euclidean projection onto the set $[-1,1]$. Thus, $x_n \rightarrow \sqrt{2}-1$ which is a lifted stationary point.

Similarly, if one chooses as initial point $x_0<0$, then, $x_n \rightarrow 1-\sqrt{2}$ which is also a lifted stationary point.

Next, we analyze the behavior of Algorithm~\ref{algo:max}. Recall that $\delta=\frac{\ell M}{m}=4$, $\tau_n=\frac{1}{\delta}=\frac{1}{4}$, $\overline{\mu}=0$, $\kappa_n \in [0,\overline{\kappa}]$ with $\overline{\kappa} \in (0,1)$. Let $\varepsilon=2$. Note that $g(x)=\max\{x+1,-x+1\}$. Then $I_{\varepsilon}(x_n)=\{1,2\}$, and so,
\begin{equation*}
w_{n}^1= {\rm P}_{[-1,1]} \left(\frac{2}{3}\left[x_n+\frac{1}{4} \frac{x_n^2+1}{|x_n|+1}  \right]\right)
\text{~and~}
w_{n}^2= {\rm P}_{[-1,1]} \left(\frac{2}{3}\left[x_n-\frac{1}{4} \frac{x_n^2+1}{|x_n|+1}  \right]\right).
\end{equation*}
In Algorithm~\ref{algo:max}, we set $x_{n+1} :=w_n^{\hat{i}_n}$, where
\begin{equation*}\hat{i}_n\in \argmin_{i \in \{1,2\}} \left((w_n^{i})^2+1 - \frac{x_n^2+1}{|x_n|+1} (|w_n^{i}|+1) +2(w_n^{i}-x_n)^2\right).
\end{equation*}
For the proceeding step for updating $x_{n+1}$, if the values happens to be the same in the above {\rm argmin} operations, we choose $\hat{i_n}$ to be the smallest index. By randomly generated the initial guess $x_0$, we observe that Algorithm~\ref{algo:max} generates a sequence $(x_n)_{n \in \mathbb{N}}$ such that $x_n \rightarrow \sqrt{2}-1$ if $x_0 \geq 0$ and $x_n \rightarrow 1-\sqrt{2}$ if $x_0<0.$ Figure~\ref{fig:0} depicts the trajectory $x_n$ of Algorithm~\ref{algo:max} with three initial points: $x_0=0, -1, 1$. Interestingly, we note that, in the case where $x_0=0$, Algorithm~\ref{algo:max} converges to a strong lifted stationary point $\sqrt{2}-1$ while Algorithm~\ref{algo:main} converges to a lifted stationary point $0$, which is not a strong lifted stationary point.
\begin{figure}[!htb]
\centering
\includegraphics[scale=0.32]{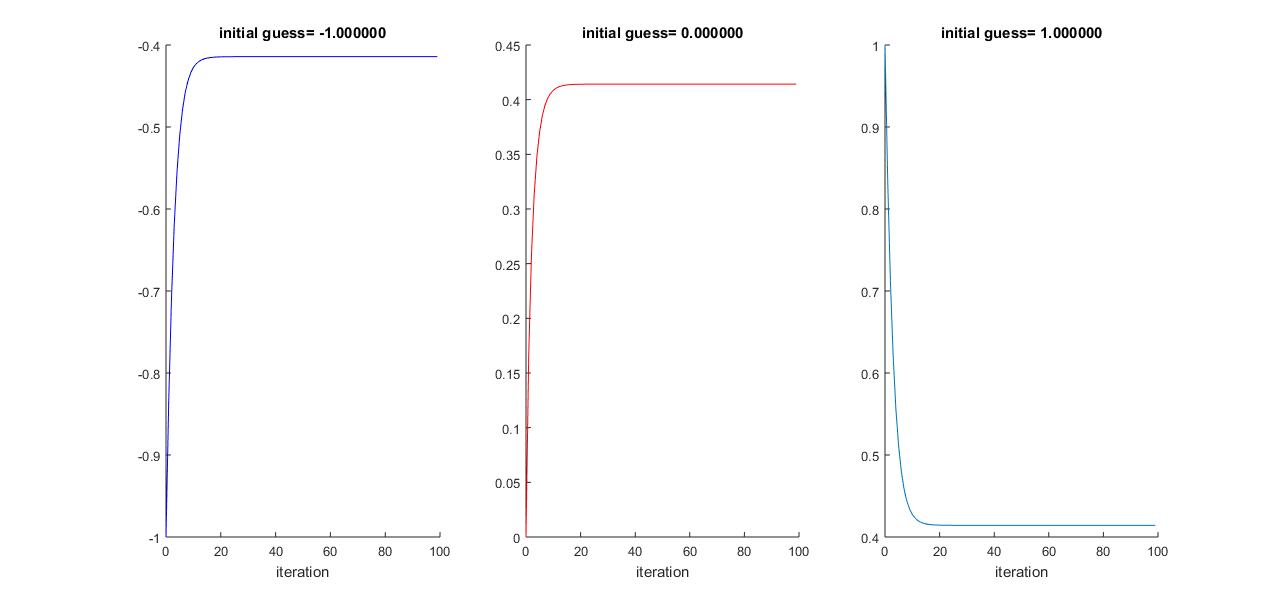}
\vspace{-0.6cm}
\caption{Trajectory of Algorithm~\ref{algo:max} with different initial guess $x_0$ for \eqref{e:analytic}}\label{fig:0}
\end{figure}

{\bf Effect of the extrapolation parameter.} We now illustrate the behavior of Algorithm~\ref{algo:main} by varying the extrapolation parameters. To do this, let $\beta=0$, $\delta=\frac{\ell M}{m}=4$, $\tau_n=\frac{1}{\delta}=\frac{1}{4}$ for all $n$. Fix any $\overline{\kappa} \in (0,1)$ and $\kappa_n \in [0,\overline{\kappa}]$. Let $\alpha \in [0,1)$. Set $g_n={\rm sign}(x_n) \in \partial_L g(x_n)$, $\overline{\mu} =\frac{\alpha  \delta \sqrt{mM}}{2M} =\sqrt{2} \alpha$, and $\mu_n=\frac{\sqrt{2}}{4}\alpha \frac{\nu_{n-1}-1}{\nu_n}$, where
\begin{equation*}
\nu_{-1}=\nu_0=1, \text{~and~} \nu_{n+1}=\frac{1+\sqrt{1+4 \nu_n^2}}{2},
\end{equation*}
and reset $\nu_{n-1}=\nu_n=1$ when $n=n_0,2n_0,3n_0,\dots$ for the integer $n_0=50$. In this case, direct verification shows that $\sup_{n} \nu_n \leq 1$, and hence $\mu_n \leq \frac{\sqrt{2}}{4} \alpha =\overline{\mu}\tau_n$. Starting with the initialization $x_0=1$, we then run Algorithm~\ref{algo:main} with different $\alpha \in[0,1)$. Figure~\ref{fig:trajectory} depicts the distance, in the log scale, between the iterates $x_n$ and the solution $x^*=\sqrt{2}-1$ for $\alpha \in \{0,0.5,0.7,0.99\}$, where the case $\alpha=0$ indeed corresponds to the un-extrapolated cases. As one can see from Figure~\ref{fig:trajectory},  as $\alpha$ increases and approaches $1$, the algorithm tends to converge faster.
\begin{figure}[!htb]
\centering
\includegraphics[scale=0.26]{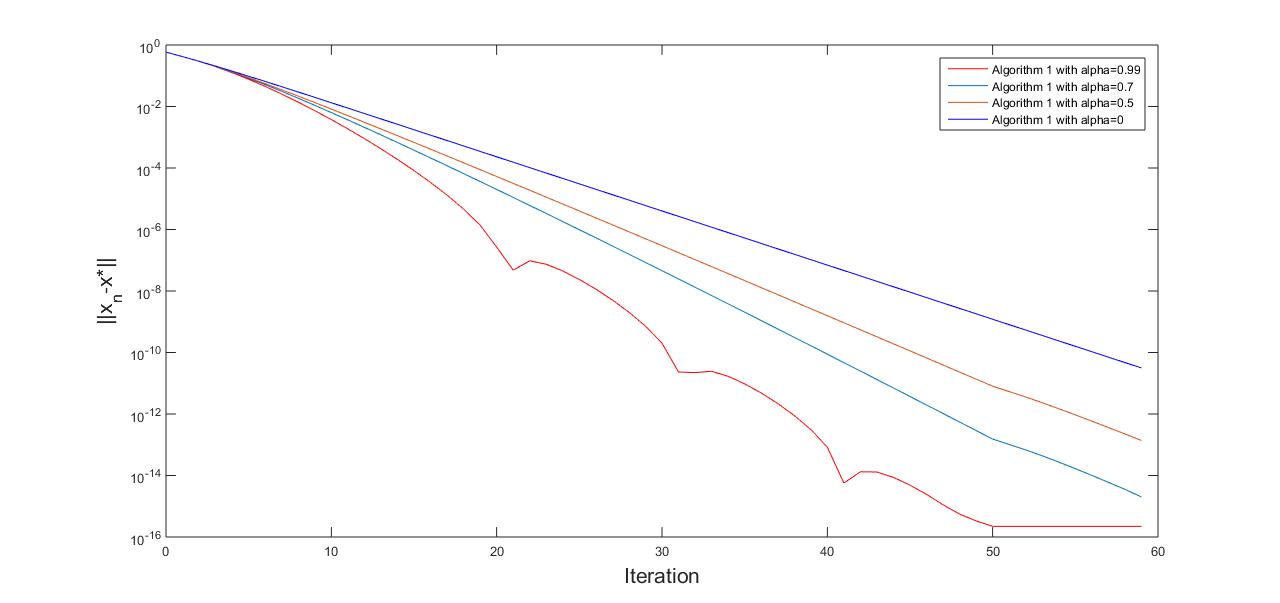}
\caption{Distance to the solution vs iterations in solving \eqref{e:analytic}}\label{fig:trajectory}
\vspace{-0.6cm}
\end{figure}

\subsection{Scale invariant sparse signal recovery problem}

As another illustration, we examine the following scale invariant sparse signal recovery problem discussed in the motivating example
\begin{equation}\label{e:ssr}
\min_{x\in \mathbb{R}^N} \frac{\|x\|_1}{\|x\|_2} \quad\text{s.t.}\quad Ax = b,\  {\rm lb}_i \leq x_i \leq {\rm ub}_i,\ i=1,\dots,N,
\tag{EP$_2$}
\end{equation}
where ${\rm lb}_i$ and ${\rm ub}_i$ are the lower bound and upper bound for the variables $x_i$, $i=1,\dots,N$. We follow \cite{RWDL19} and generate the matrix $A$ via the so-called oversampled discrete cosine transform (DCT), that is,
$A=[a_1, a_2,\dots,a_N] \in \mathbb{R}^{P \times N}$ where
\begin{equation*}
a_j=\frac{1}{\sqrt{P}} \cos\left(\frac{2\pi w \, j}{F}\right), \ j=1,\dots,N.
\end{equation*}
where $w$ is a random vector uniformly distributed in $[0,1]^P$
 and $F$ is a positive number which gives a measure on how coherent the matrix is. The ground truth $x^g \in \mathbb{R}^N$ is simulated as an $s$-sparse signal where $s$ is the total number of nonzero entries. The support of $x^g$ is a random index set, and the values of nonzero elements follow a Gaussian normal distribution. Then the ground-truth is normalized to have maximum
 magnitude as $1$ so that we can examine the performance within the $[-1,1]^N$ box constraint. Then, we generate $b=Ax^g$, and set ${\rm lb}_i=-1$ and ${\rm ub}_i=1$. Specifically, in our experiment, following \cite{RWDL19}, we consider the above matrix $A$ of size
 $(P,N)=(64,1024)$, $F=10$ and the ground-truth sparse vector has $12$ nonzero elements.

We use two methods for solving this  scale invariant sparse signal recovery problem: our proposed extrapolated proximal subgradient method (e-PSG)  and the alternating direction of method of multipliers (ADMM) proposed in \cite{RWDL19}. It was shown
in \cite{RWDL19} that the ADMM method works very efficiently although the theoretical justification of the convergence of this method is still lacking.
\begin{itemize}
 \item ADMM method: We first solve the $L_1$-optimization problem which results when replacing the objective of \eqref{e:ssr} by $\|x\|_1:=\sum_{i=1}^N |x_i|$. This is done by using the commercial
 software Gurobi and produces a solution $x_0$ for the $L_1$-optimization problem. Following \cite{RWDL19}, we use $x_0$ as an initialization and use the ADMM method proposed therein. We terminate
 the algorithm when the relative error $\frac{\|x_{n+1}-x_n\|}{\max\{\|x_n\|,1\}}$ is smaller than $10^{-9}$.
 \item Algorithm~\ref{algo:main} (e-PSG method): Similar to the ADMM method, we also use the solution of  the $L_1$-optimization problem as the initial point. We choose $f^s \equiv 0$ (and so, $\ell=0$),  $\kappa_n=0$. As $g(x)=\|x\|_2$ is convex,  $\beta=0$. Moreover, for all $x$ feasible for \eqref{e:ssr},
 $m \leq g(x) \leq M$ where $M=\sqrt{N}$ and $m$ is a positive number computed as the Euclidean norm of the least norm solution of $Ax=b$ via the Matlab code \verb+m = norm(pinv(A)*b)+.
 Let $\alpha=0.99$ and set $\mu_n= \frac{\alpha \sqrt{\frac{m}{M}}}{2} \frac{\nu_{n-1}-1}{\nu_n}$, where
\begin{equation*}
\nu_{-1}=\nu_0=1, \text{~and~} \nu_{n+1}=\frac{1+\sqrt{1+4 \nu_n^2}}{2},
\end{equation*}
and reset $\nu_{n-1}=\nu_n=1$ when $n=n_0,2n_0,3n_0,\dots$ for the integer $n_0=50$. For any $\delta>0$,
let $\tau_n=\frac{1}{\delta}$ and $\overline{\mu}= \frac{\alpha \delta \sqrt{\frac{m}{M}}}{2}< \frac{\delta \sqrt{\frac{m}{M}}}{2}$. It can be verified that $\mu_n \leq \frac{\alpha \sqrt{\frac{m}{M}}}{2} = \overline{\mu} \tau_n$, and so, the requirements of the parameters in Algorithm~\ref{algo:main}
are satisfied.  We use the same termination criterion as for the ADMM method. For the subproblem arising in Step~\ref{step:main} of Algorithm~\ref{algo:main}, we reformulate the problem as an equivalent quadratic program with linear constraints, and
 solve it using the
 software Gurobi.
\end{itemize}

We run the ADMM and the e-PSG method (Algorithm~\ref{algo:main}) for 50 trials. The following table summarizes the output of the two methods by listing the average number of
\begin{itemize}
\item   sparsity level of the initial guess: the number of entries of the initialization (the solution for $L_1$-optimization problem) with value larger than $10^{-6}$;
 \item sparsity level of the solution: the number of entries of the computed solution with value larger than $10^{-6}$;
 \item error with respect to the ground truth: the Euclidean norm of the difference of the computed solution and the ground truth $x^g$;
 \item the objective value of the computed solution;
 \item CPU time measured in seconds.
\end{itemize}
From Table~\ref{tab:computation}, one can see that e-PSG method is competitive with the ADMM method in terms of sparsity level and the CPU time used, and produces a solution with slightly better quality in terms
of the final objective value and the error with respect to the ground truth.  As plotted in Figure~\ref{fig:1}, one
can see that ADMM uses around 2000 iterations to reach the desired relative error tolerance, and has  sharp oscillating phenomenon in terms of the objective value (this has also been observed in \cite{RWDL19}, and the authors of \cite{RWDL19} believed that this is one of the major obstacles in establishing the convergence of the ADMM method); while the proposed e-PSG method quickly approaches
the desired error tolerance. On the other hand, it should be noted that the subproblems in the ADMM method have closed form solutions while the subproblems in the e-PSG method are reformulated as quadratic programming problems  with linear constraints and solved via the software Gurobi\footnote{One possible way to improve the CPU time in using e-PSG is to solve the subproblem via alternating direction method of multiplier method directly. We leave this as a future study.}.

\begin{table}[!htb]
\centering
\begin{tabular}{|l|c|c|c|c|c|} \hline
 & \multicolumn{2}{c|}{sparsity level} & error w.r.t & objective value of  & \multirow{2}{*}{CPU time} \\ \cline{2-3}
 & initial guess & computed solution & the ground truth & the computed solution   & \\ \hline
 ADMM & 64 & 12 & 6.948329e-06 & 2.724348 & 1.970365 \\ \hline
 e-PSG & 64 & 12  & 4.539185e-10 & 2.724326 & 2.375557 \\ \hline
\end{tabular}
\vspace{-0.2cm}
\caption{Computation results for \eqref{e:ssr}}\label{tab:computation}
\end{table}

\begin{figure}[!htb]
\centering
\includegraphics[scale=0.33]{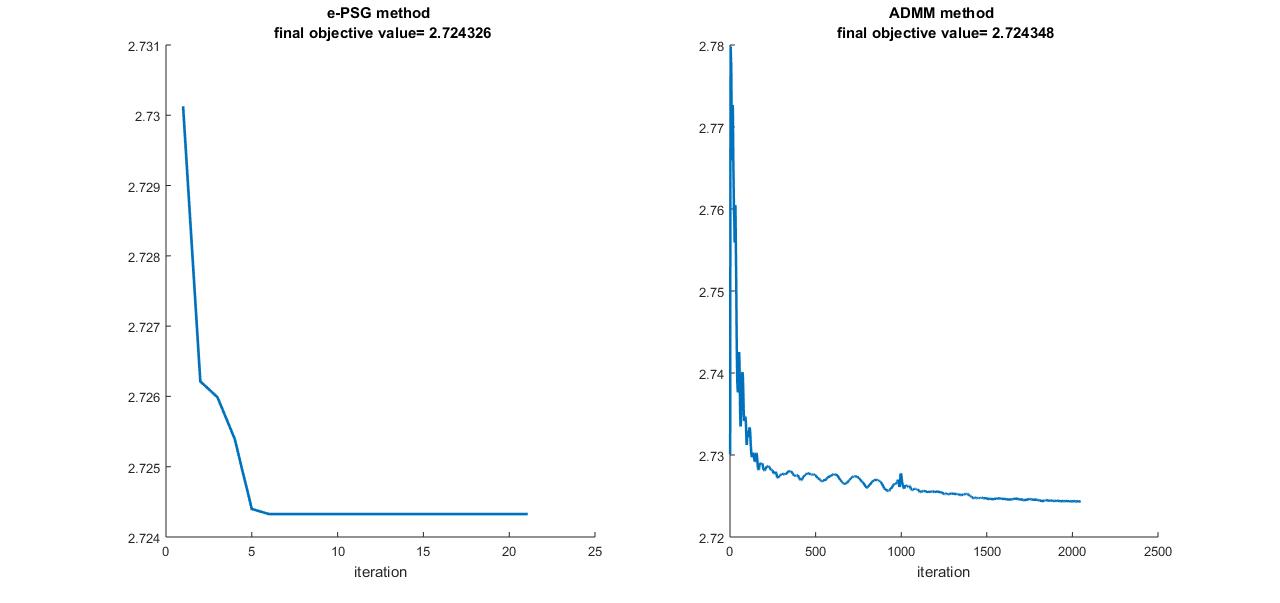}
\caption{Objective values vs. iterations in solving \eqref{e:ssr}}\label{fig:1}
\vspace{-0.6cm}
\end{figure}

\section{Conclusions}

We have proposed proximal subgradient algorithms with extrapolations for solving fractional optimization model where both the numerator and denominator can be nonsmooth and nonconvex. We have shown that the sequence of iterates generated by the algorithm is bounded and any of its limit points is a stationary point of the model problem. We have also established the global convergence of the sequence by further assuming the KL property for a suitable merit function by providing a unified analysis framework of descent methods. Finally, in the case where the denominator is the maximum of finitely many continuously differentiable weakly convex functions, we have also proposed an enhanced proximal subgradient algorithm with extrapolations, and showed that this enhanced algorithm converges to a stronger notion of stationary points of the model problem.

Our results in this paper point out the following interesting open questions and future work:
(1) For the enhanced proximal subgradient algorithm with extrapolations (Algorithm~\ref{algo:max}), is it possible to extend the case from
$g(x) =\max_{1 \leq i \leq p}\{g_i(x)\}$ to $g(x) =\max_{t \in T}\{g_t(x)\}$ where $T$ is a (possibly) infinite set? (2) In Algorithm~\ref{algo:max}, as one needs to solve the subproblem $|I_{\varepsilon}(x_n)|$ times, this can be time consuming when the dimension is high. Is it possible to incorporate randomize techniques to save the computational cost and establish the convergence in probability sense? (3) How to obtain the global convergence of the full sequence of Algorithm~\ref{algo:max} under weaker and
reasonable assumptions is also an important topic to be examined. Finally, further numerical implementations of our algorithms and comparisons with other competitive methods are left as future research.

\noindent {\bf Acknowledgement:} The authors would like to thank Dr. Yifei Lou for kindly sharing the MATLAB code for the ADMM method used in \cite{RWDL19}.

\end{document}